\DeclareFontFamily{OT1}{pzc}{}
\DeclareFontShape{OT1}{pzc}{m}{it}{<-> s * [1.100] pzcmi7t}{}
\DeclareMathAlphabet{\mathpzc}{OT1}{pzc}{m}{it}
    \patchcmd{\section}{\scshape}{\large\bfseries}{}{}
    \renewcommand{\@secnumfont}{\bfseries}
\numberwithin{equation}{section}
\newtheorem{theorem}{Theorem}[section]
\newtheorem*{theorem*}{Theorem}
\newtheorem{corollary}[theorem]{Corollary}
\newtheorem{lemma}[theorem]{Lemma}
\newtheorem{proposition}[theorem]{Proposition}
\theoremstyle{definition}
\newtheorem*{question*}{Question}
\newtheorem*{conjecture*}{Conjecture}
\newtheorem{definition}[theorem]{Definition}
\newtheorem{remark}[theorem]{Remark}
\newtheorem{example}[theorem]{Example}
\def\mono{\rightarrowtail}
\def\epi{\twoheadrightarrow}
\def\Ker{\mathrm{Ker}}
\def\Im{\mathrm{Im}}
\def\KK{\mathbb{K}}
\def\ZZ{\mathbb{Z}}
\def\MH{\mathrm{MH}}
\def\NN{\mathscr{N}}
\def\FF{\mathcal{F}}
\def\GG{\mathbb{G}}
\def\Hom{\mathrm{Hom}}
\def\Tor{\mathrm{Tor}}
\def\Ext{\mathrm{Ext}}
\def\MH{\mathrm{MH}}
\def\MC{\mathrm{MC}}
\def\Mag{\mathrm{Mag}}
\def\tP{\mathtt{P}}
\def\tS{\mathtt{S}}
\def\tL{\mathtt{L}}
\def\tN{\mathtt{N}}
\def\RR{\mathcal{R}}
\def\cN{\mathcal{N}}
\def\Lk{\mathrm{Lk}}
\let\oldtocsection=\tocsection 
\let\oldtocsubsection=\tocsubsection 
\renewcommand{\tocsection}[2]{\hspace{0mm}\oldtocsection{#1}{#2}}
\renewcommand{\tocsubsection}[2]{\hspace{1em}\oldtocsubsection{#1}{#2}}
\title[Diagonal digraphs, Koszul algebras and homology spheres]{On diagonal digraphs, Koszul algebras and triangulations of homology spheres}
\author{Sergei O. Ivanov} 
\address{
Beijing Key Laboratory of Topological Statistics and Applications for Complex Systems, Beijing Institute of Mathematical Sciences and Applications (BIMSA), Beijing 101408, China.}
\email{ivanov.s.o.1986@gmail.com, ivanov.s.o.1986@bimsa.cn}
\author{Lev Mukoseev} 
\address{
Saint Petersburg State University, 7/9 Universitetskaya Emb., 199034, Saint Petersburg, Russia.}
\email{la.mukoseev@gmail.com}
\begin{document}

\begin{abstract}  
We study magnitude homology of digraphs, with a particular focus on diagonal digraphs, i.e., digraphs whose magnitude homology is concentrated on the diagonal. For any digraph $G$, we provide a complete description of the second magnitude homology $\MH_{2,k}(G)$. This allows us to define a combinatorial condition, denoted by $(\mathcal{V}_\ell)$, which is equivalent to the vanishing of ${\rm MH}_{2,k}(G,\mathbb{Z})$ for all $k>\ell$. In particular, diagonal digraphs satisfy $(\mathcal{V}_2)$. As a corollary, we obtain that the 2-dimensional CW-complex obtained from a diagonal undirected graph by attaching 2-cells to all squares and triangles of the graph is simply connected. We also give an interpretation of diagonality in terms of Koszul algebras: a digraph $G$ is diagonal if and only if the distance algebra $\sigma G$ is Koszul over any field, and if and only if $G$ satisfies $(\mathcal{V}_2)$ and the path cochain algebra $\Omega^\bullet(G)$ is Koszul over any field. To provide a source of examples of digraphs, we study the extended Hasse diagram $\hat G_K$ of a pure simplicial complex $K$. For a triangulation $K$ of a topological manifold $M$, we express the non-diagonal part of the magnitude homology of $\hat G_K$ in terms of the homology of $M$. As a corollary, we obtain that if $K$ is a triangulation of a closed manifold $M$, then $\hat G_K$ is diagonal if and only if $M$ is a homology sphere.
\end{abstract}

\maketitle

\tableofcontents

\section{Introduction}
Hepworth and Willerton in \cite{hepworth2017categorifying} introduced magnitude homology of a graph as a categorification of the magnitude of a graph studied by Leinster \cite{leinster2019magnitude}. This concept was later generalized by Leinster and Shulman to arbitrary generalized metric spaces and to enriched categories with some additional data \cite{leinster2021magnitude}. Hepworth \cite{hepworth2022magnitude} developed the theory of magnitude cohomology. Kaneta and Yoshinaga pointed out the complexity of magnitude homology of graphs by showing that the homology of any pure simplicial complex can be embedded into the magnitude homology of some graph \cite{kaneta2021magnitude}. Asao studied this theory in the setting of digraphs and showed that it is related to path homology theory \cite{asao2023magnitude}. Asao together with the first-named author proved that the magnitude homology and cohomology of a digraph $G$ can be presented in terms of the Tor and Ext functors over a graded algebra $\sigma G$, which we call the distance algebra of $G$ \cite{asao2024magnitude}.

Hepworth and Willerton call a graph diagonal if its integral magnitude homology groups are concentrated in the diagonal degrees. They provide many examples: trees, complete graphs, the icosahedral graph (see also \cite{gu2018graph}), and the join of two arbitrary graphs. The box product of two diagonal graphs is also diagonal; in particular, hypercubes are diagonal. Median graphs, which are retracts of hypercubes (see \cite{bandelt1984retracts}, \cite{bottinelli2021magnitude}), are also diagonal. Consequently, square graphs are diagonal \cite{seemann2023planar}. Diagonal digraphs can be defined similarly.

This article can be roughly divided into four parts. In the first part, we show that the magnitude homology of a digraph is isomorphic to a subquotient of its path algebra. This description, reminiscent of Gruenberg's description of group homology, is referred to as the Gruenberg formula. In the second part, we use the Gruenberg formula to provide a complete description of the second magnitude homology $\MH_{2,\ell}(G)$ for any $\ell$ and any finite digraph $G$. From this, we derive a necessary condition for a digraph to be diagonal.

The third part connects the diagonality of a digraph to representation theory through the theory of Koszul algebras. Finally, in the fourth part, inspired by the work of Kaneta and Yoshinaga, we study digraphs associated with simplicial complexes, which we call extended Hasse diagrams. We examine conditions under which a digraph corresponding to a triangulation of a manifold is diagonal.

\subsection{The Gruenberg formula}

In \cite{gruenberg1960resolutions}, Gruenberg shows that any presentation of a group $\mathcal{G} \cong \mathcal{F}/\mathcal{R}$ defines an explicit projective resolution of the trivial module, described in terms of the ideals $\mathbf{f}$ and $\mathbf{r}$ of the free group ring $\mathbb{Z}[\mathcal{F}]$. Here, $\mathbf{f}$ denotes the augmentation ideal, and $\mathbf{r}$ is the kernel of the map $\mathbb{Z}[\mathcal{F}] \twoheadrightarrow \mathbb{Z}[\mathcal{G}]$. Since the homology of $\mathcal{G}$ can be presented as Tor functors,
\begin{equation}
H_n(\mathcal{G}) = {\rm Tor}_n^{\mathbb{Z}[\mathcal{G}]}(\mathbb{Z}, \mathbb{Z}),
\end{equation}
this resolution was used to obtain the following isomorphisms for group homology {\cite[\S 3.7]{gruenberg2006cohomological}}:
\begin{equation}
H_{2n}(\mathcal{G}) \cong \frac{\mathbf{r}^n \cap \mathbf{f} \mathbf{r}^{n-1} \mathbf{f}}{\mathbf{f} \mathbf{r}^n + \mathbf{r}^n \mathbf{f}}, \hspace{1cm}
H_{2n+1}(\mathcal{G}) \cong \frac{\mathbf{f} \mathbf{r}^n \cap \mathbf{r}^n \mathbf{f}}{\mathbf{r}^{n+1} + \mathbf{f} \mathbf{r}^n \mathbf{f}}.
\end{equation}

Apparently, the idea of constructing such a resolution first appeared in the article by Eilenberg, Nagao, and Nakayama \cite{eilenberg1956dimension} in 1956. However, it was Gruenberg who popularized this resolution in group homology theory in 1960. This idea was also used by Bachmann \cite{bachmann1972gruenberg} for augmented algebras in 1972 and by Govorov \cite[Lemma 1]{govorov1973dimension} for graded algebras in 1973. For the case of finite-dimensional algebras, the same idea appeared in the work of Bongartz \cite{bongartz1983algebras} and Butler and King \cite{butler1999minimal}. We establish a general version of the Gruenberg formula that covers all of these statements (Theorem \ref{theorem:Gruenberg_formulas}). In this general version of the theorem, we replace the algebra $\mathbb{Z}[\mathcal{F}]$ with an arbitrary quasi-free algebra.

For a finite digraph $G$ and a commutative ring $\KK$, we consider the  distance algebra $\sigma G$, which is a graded algebra defined as a quotient of the path algebra:
\begin{equation}
\sigma G = \KK G / R,
\end{equation}
where $R$ is a homogeneous ideal generated by differences of shortest paths having the same initial and terminal vertices, and by all long paths (i.e., non-shortest paths). In \cite[Th.~6.2]{asao2024magnitude}, Asao and the first-named author proved that magnitude homology with coefficients in $\KK$ can be regarded as the Tor functor over the distance algebra:
\begin{equation}\label{eq:intro:derived}
\MH_{n,\ell}(G) \cong \Tor^{\sigma G}_{n,\ell}(S, S),
\end{equation}
where $S = (\sigma G)_0$. We also denote by $J$ the ideal of $\KK G$ generated by all arrows. Using the Tor description of magnitude homology, the fact that $\KK G$ is a quasi-free algebra, and our general version of the Gruenberg formula, we prove the following formulas for magnitude homology (Theorem~\ref{theorem:Gruenberg_for_magnitude}):
\begin{equation}
\MH_{2n,*}(G) \cong \frac{R^n \cap J R^{n-1} J}{J R^n + R^n J}, \hspace{1cm}
\MH_{2n+1,*}(G) \cong \frac{J R^n \cap R^n J}{R^{n+1} + J R^n J}.
\end{equation}

\subsection{The second magnitude homology group}

For the second magnitude homology group, the Gruenberg formula takes the form:
\begin{equation}\label{eq:MH_2_ell}
\MH_{2,*}(G) \cong \frac{R}{J R + R J}.
\end{equation}
Asao proves that $\MH_{2,\ell}(G)$ is the zeroth homology group of a pair of simplicial complexes \cite{asao2021geometric}. In particular, $\MH_{2,\ell}(G)$ is a free $\KK$-module. The formula \eqref{eq:MH_2_ell} allows us to describe its basis. This provides a necessary and sufficient condition for the vanishing of ${\rm MH}_{2,k}(G)$ for $k > \ell$. We denote this condition by $(\mathcal{V}_\ell)$ and describe it below.

In order to describe the property $(\mathcal{V}_\ell)$, we need to introduce additional terminology. We say that two paths $p = (p_0, \dots, p_n)$ and $q = (q_0, \dots, q_m)$ in $G$ connect the same vertices if $p_0 = q_0$ and $p_n = q_m$. An equivalence relation $\sim$ on the set of paths is called a \emph{congruence} if $p \sim \tilde{p}$ implies that $p$ and $\tilde{p}$ connect the same vertices and $q p q' \sim q \tilde{p} q'$ for any $q$ and $q'$ such that the concatenation is defined. The \emph{$\ell$-short congruence} is the minimal congruence on the set of paths such that any two shortest paths connecting the same two vertices at distance at most $\ell$ are equivalent. A path that is not shortest is called long.

A long path is called \emph{$\ell$-reducible} if it contains a long subpath of length at most $\ell$. A long path is called \emph{$\ell$-quasi-reducible} if it is $\ell$-shortly congruent to an $\ell$-reducible path. Then $\MH_{2,k}(G) = 0$ for any $k > \ell$ if and only if the following condition is satisfied (Theorem~\ref{th:vanishing}):

\begin{itemize}
\item[$(\mathcal{V}_\ell)$] \it Any two shortest paths connecting the same vertices are $\ell$-shortly congruent, and any long path is $\ell$-quasi-reducible.
\end{itemize}

A long path is called minimal if all of its proper subpaths are shortest. To verify the condition $(\mathcal{V}_\ell)$ for a given digraph, it is sufficient to consider the shortest paths and minimal long paths.

Note that any diagonal digraph satisfies the condition $(\mathcal V_2)$, but this condition is not equivalent to diagonality. For instance, the digraphs (an undirected edge means two arrows in opposite directions) 
\begin{equation}
\begin{tikzpicture}[baseline]
\tikzset{
vertex/.style={circle,
inner sep=0pt, outer sep=0pt,
minimum width=3pt, 
fill=black, 
draw =black}}
\node[vertex] (a0) at (-0.8,0.8) {};
\node[vertex] (a1) at (0.8,0.8) {};
\node[vertex] (a2) at (0.8,-0.8) {};
\node[vertex] (a3) at (-0.8,-0.8) {};
\node[vertex] (b1) at (0.3,0.3) {};
\node[vertex] (b2) at (0.3,-0.3) {};
\node[vertex] (b3) at (-0.3,-0.3) {};
\draw[black] 
(a0) -- (a1) -- (a2) -- (a3) -- (a0)
(a0) -- (b1) -- (b2) -- (b3) -- (a0)
(a1) -- (b1)
(a2) -- (b2)
(a3) -- (b3)
;
\end{tikzpicture}
\hspace{2cm}
\begin{tikzpicture}[baseline]
\tikzset{
vertex/.style={circle,
inner sep=0pt, outer sep=0pt,
minimum width=3pt, 
fill=black, 
draw =black}} 
\node[vertex] (a0) at (-0.8,0.8) {};
\node[vertex] (a1) at (0.8,0.8) {};
\node[vertex] (a2) at (0.8,-0.8) {};
\node[vertex] (a3) at (-0.8,-0.8) {};
\draw[-{Stealth}] (a0) -- (a1);
\draw[-{Stealth}] (a1) -- (a3);
\draw[-{Stealth}] (a3) -- (a2);
\draw 
(a1) -- (a2)
(a0) -- (a3)
;
\end{tikzpicture}
\end{equation}
satisfy $(\mathcal V_2)$, but they are not diagonal, because $\MH_{3,4}(G,\mathbb{Z})\neq 0$
(see computations in Examples \ref{example:computation1} and \ref{example:computation2}). Below, we construct a more conceptual example of a non-diagonal digraph satisfying $(\mathcal V_2)$, which is defined by a combinatorial triangulation of $S^1\times I$.

Di and Zhang, together with the authors, studied a filtered simplicial set $\NN^*(G)$, called the filtered nerve of $G$, in \cite[\S 1]{di2024path} (see also \cite[\S 3]{hepworth2023reachability}). They considered the $\ell$-fundamental groupoid of a digraph $G$, defined as the fundamental groupoid of the simplicial set $\NN^\ell(G)$. Similarly, we define the $\ell$-fundamental category $\tau^\ell(G)$ of a digraph $G$ as the fundamental category of the simplicial set $\NN^\ell(G)$. We prove that for a digraph $G$ and an integer $\ell \geq 2$,
\begin{equation}
\MH_{2,k}(G) = 0 \text{ for } k > \ell
\hspace{5mm}
\Rightarrow
\hspace{5mm}
\tau^\ell(G) \text{ is a thin category.}
\end{equation}
As a corollary, we obtain that the GLMY-fundamental group of a diagonal undirected graph is trivial (Corollary~\ref{cor:fundamental_diagonal}).

It is known \cite[Corollary~4.5]{grigor2018fundamental} that $\pi^{\sf GLMY}_1(G)$ is isomorphic to $\pi_1({\sf CW}^2(G))$, where ${\sf CW}^2(G)$ is the two-dimensional CW-complex obtained from the geometric realization of $G$ by attaching 2-cells to all its squares and triangles. Therefore, we conclude that
\begin{equation}
G \text{ is a diagonal undirected graph}
\hspace{5mm}
\Rightarrow
\hspace{5mm}
{\sf CW}^2(G) \text{ is simply connected.}
\end{equation}

It is worth noting that the path homology of a diagonal undirected graph is also trivial \cite[Prop.~8.6]{asao2023magnitude}, and the bigraded path homology is also trivial \cite{hepworth2024bigraded}. Thus, diagonal undirected graphs, in some sense, behave like contractible spaces from the viewpoint of GLMY theory.

\subsection{Diagonal digraphs and Koszul algebras}

For a digraph $G$, Grigor'yan--Lin--Muranov--Yau consider a dg-algebra $\Omega^\bullet(G) = \Omega^\bullet(G, \KK)$, whose cohomology is the path cohomology ${\rm PH}^n(G) = H^n(\Omega^\bullet(G))$, and whose elements are called ``$d$-invariant forms'' \cite[\S 3.4]{grigor2012homologies}. The algebra $\Omega^\bullet(G)$ will be referred to as the path cochain algebra. We prove that $\Omega^\bullet(G)$ is isomorphic to a quotient of the path algebra:
\begin{equation}
\Omega^\bullet(G) \cong \KK G / T,
\end{equation}
where $T$ is an ideal generated by quadratic relations $t_{x,y}$ indexed by pairs of vertices at distance two (Theorem~\ref{theorem:omega}). By a result of Hepworth \cite[Th.~6.2]{hepworth2022magnitude}, the latter quotient algebra is isomorphic to the diagonal part of the magnitude cohomology algebra:
\begin{equation}
\Omega^\bullet(G) \cong \MH^{\sf diag}(G).
\end{equation}
This is a dual version of the result of Asao \cite[Lemma 6.8]{asao2023magnitude}.

We further show that, if $G$ satisfies $(\mathcal{V}_2)$ and $\KK$ is a field, then the distance algebra $\sigma G$ is quadratic and quadratic dual to the path cochain algebra:
\begin{equation}
(\sigma G)^! \cong \Omega^\bullet(G^{\text{op}}).
\end{equation}
Using this, we obtain a characterization of diagonal digraphs in terms of Koszul algebras (Theorem~\ref{th:Koszul}). Namely, we prove that the following statements about a finite digraph $G$ are equivalent:
\begin{enumerate}
\item $G$ is diagonal;
\item $\sigma G$ is Koszul for any field $\KK$;
\item $G$ satisfies $(\mathcal{V}_2)$ and $\Omega^\bullet(G)$ is Koszul for any field $\KK$.
\end{enumerate}

For a digraph $G$ satisfying $(\mathcal{V}_2)$, we also give an explicit description of the Koszul complex of the quadratic algebra $\Omega^\bullet(G)$ (Proposition~\ref{prop:koszul_complex}). This provides another equivalent description of diagonal digraphs from a completely different viewpoint (Corollary \ref{cor:diagonality:koszul_complex}).

\subsection{Extended Hasse diagrams}

Here we develop the ideas of Kaneta--Yoshinaga \cite[\S 5.3]{kaneta2021magnitude} in the setting of digraphs. For a pure simplicial complex $K$, we consider a digraph $\hat G_K$, whose vertices are the simplices of $K$ together with two additional vertices $\hat 0$ and $\hat 1$. There are three types of arrows in $\hat G_K$: pairs of simplices $(\sigma, \tau)$, where $\sigma$ is a face of $\tau$ such that $\dim(\tau) = \dim(\sigma) + 1$; pairs of the form $(\hat 0, \sigma)$, where $\sigma$ is a $0$-simplex; and pairs of the form $(\sigma, \hat 1)$, where $\sigma$ is a maximal simplex. The digraph $\hat G_K$ will be referred to as the extended Hasse diagram of $K$.

We describe the non-diagonal part of the magnitude homology $\MH_{n,\ell}(\hat G_K)$ for a triangulation $K$ of a topological manifold (Theorem~\ref{th:manifold}). Namely, we prove that, for a triangulation $K$ of a topological manifold with boundary $M$ and $n \neq \ell$, we have
\begin{equation}
\MH_{n,\ell}(\hat G_K) \cong
\begin{cases}
\bar H_{n-2}(M), & 
\ell - 2 = \dim(M), \\ 
0, & \text{otherwise.}
\end{cases}
\end{equation}

As a corollary, we obtain that for a triangulation $K$ of a closed manifold $M$ of dimension at least one, the digraph $\hat G_K$ is diagonal if and only if $M$ is a homology sphere. Therefore, the Koszul property for $\hat G_K$ is a topological invariant of $M$. Note that the Koszul property has already appeared in the literature as a topological invariant in a similar context \cite{sadofsky2011koszul}.

Since any smooth manifold with boundary has a triangulation, this theorem allows us to construct interesting examples of (non-)diagonal digraphs. For example, take $2 \leq n_0 < \ell_0$ and consider a triangulation $K$ of the product $M = S^{n_0 - 2} \times I^{\ell_0 - n_0}$. Then the non-diagonal part of the magnitude homology groups of $\hat G_K$ is concentrated in degree $(n_0, \ell_0)$.

\subsection{Acknowledgements}
The authors are grateful to Alexandra Zvonareva, Xin Fu and Semen Podkorytov for useful discussions.

\section{A general version of the Gruenberg formula}

In this section we present some background related to quasi-free algebras and prove a general version of the Gruenberg formula for a quotient algebra of a quasi-free algebra $A=F/I.$ In the following sections we use this formula for the path algebra associated with a digraph $F=\KK G.$

\subsection{Quasi-free algebras}
We fix a commutative ring $\KK$ and denote  by $\GG$ an abelian group. For future references, we will work in the setting of $\GG$-graded algebras over $\KK$ and $\GG$-graded modules over them in this section, but in the next sections we will only use $\GG=\ZZ.$ In this section the term `graded' means `$\GG$-graded'. All tensor products and hom-sets in this subsection will be $\GG$-graded. Some background about $\GG$-graded algebras and modules can be found in \cite[Appendix]{asao2024magnitude}.  For a graded algebra $A=\bigoplus_{g\in \GG} A_g$, we consider the graded algebra $A^e=A^{op}\otimes A.$ Then any graded $A$-bimodule can be identified with a right graded $A^e$-module. 

A graded algebra $F$ is called \emph{quasi-free} if the projective dimension of $F$ in the category of graded $F$-bimodules is at most one. If we set 
$\Omega^1_F=\Ker(\mu:F\otimes F \to F),$ where $\mu(a\otimes b)=ab,$ then we obtain a short exact sequence of graded $F$-bimodules 
\begin{equation}
 0 \longrightarrow \Omega^1_F \longrightarrow F\otimes F \longrightarrow F \longrightarrow 0,
\end{equation}
where $F\otimes F$ is a free graded bimodule of rank one. Therefore $F$ is quasi-free if and only if $\Omega^1_F$ is a projective graded $F$-bimodule (see \cite{cuntz1995algebra}, \cite{kontsevich2000noncommutative}, \cite{bondal2015coherence}). 

We also prove that under some conditions  right homogeneous ideals of a quasi-free graded algebra $F$ are projective graded $F$-modules.

\begin{lemma}\label{lemma:quasi-free-proj-dim}
Let $F$ be a quasi-free graded algebra and $M$ be a right graded $F$-module, which is projective as a $\KK$-module. Then the projective dimension of $M$ in the category of graded $F$-modules is at most one.  
\end{lemma}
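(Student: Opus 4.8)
The plan is to build a projective resolution of $M$ of length one directly from the bimodule resolution of $F$. Since $F$ is quasi-free, we have the short exact sequence of graded $F$-bimodules $0 \to \Omega^1_F \to F \otimes F \to F \to 0$ with $\Omega^1_F$ projective. First I would apply the functor $M \otimes_F -$ to this sequence. Because $M$ is projective as a $\KK$-module and $F \otimes F$ is free over $\KK$ in each degree (being built from tensor powers of $F$), one checks that $\mathrm{Tor}_1^F(M, F) = 0$, so the sequence stays exact after tensoring, giving a short exact sequence of right graded $F$-modules
\begin{equation}
0 \longrightarrow M \otimes_F \Omega^1_F \longrightarrow M \otimes_F (F \otimes F) \longrightarrow M \longrightarrow 0.
\end{equation}

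The middle term $M \otimes_F (F \otimes F) \cong M \otimes F$ (tensor over $\KK$) is a projective right graded $F$-module: if $M$ is $\KK$-projective, then $M \otimes F$ is a direct summand of a free $\KK$-module tensored with $F$, hence a direct summand of a free graded $F$-module. Next I would argue that $M \otimes_F \Omega^1_F$ is also a projective right graded $F$-module. Since $\Omega^1_F$ is projective as a graded $F$-bimodule, it is a direct summand of a free $F$-bimodule, i.e.\ of a (possibly infinite) direct sum of shifted copies of $F \otimes F$; applying $M \otimes_F -$ turns this into a direct summand of a direct sum of shifted copies of $M \otimes F$, which is projective by the previous step. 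Thus $M \otimes_F \Omega^1_F$ is projective, and the displayed sequence is a projective resolution of $M$ of length at most one, which is exactly the claim.

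The main obstacle I anticipate is the vanishing of $\mathrm{Tor}_1^F(M, F)$ — equivalently, the exactness of $M \otimes_F -$ applied to the bimodule sequence. The subtlety is that $M$ need not be flat over $F$; what saves us is that we are tensoring against the specific bimodule sequence whose middle and right terms, viewed as \emph{left} $F$-modules, are free, together with the hypothesis that $M$ is $\KK$-projective. Concretely, $F \otimes F$ is free as a left $F$-module (on the $\KK$-module $F$ sitting in the second factor), and the kernel $\Omega^1_F$, being a graded $F$-sub-bimodule of $F \otimes F$ with $F$-projective quotient $F$... one should instead note that $\Omega^1_F$ is projective as a \emph{left} $F$-module too (a summand of a free $F$-bimodule is in particular a summand of a free left $F$-module), so the bimodule sequence is a sequence of left-$F$-projective modules; tensoring a short exact sequence of this shape with a $\KK$-projective right module preserves exactness because one can check it degreewise over $\KK$. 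I would spell this degreewise check out carefully, as it is the one genuinely load-bearing computation; everything else is formal manipulation of summands.
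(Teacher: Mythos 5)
Your proposal is essentially correct and follows the same route as the paper's proof: apply $M \otimes_F -$ to a projective bimodule resolution of $F$ of length one and show the resulting complex is a projective resolution of $M$ in graded right $F$-modules. Your argument for projectivity of the terms (reduce to $P = F\otimes F$, observe $M \otimes_F (F\otimes F) \cong M \otimes_\KK F$ is a summand of a free graded right $F$-module since $M$ is $\KK$-projective, then pass to summands for $\Omega^1_F$) is a fine concrete alternative to the paper's more abstract one, which proves $M \otimes_F P$ projective for any projective bimodule $P$ at one stroke via the adjunction $\Hom_F(M \otimes_F P, -) \cong \Hom_{F^e}(P, \Hom_\KK(M, -))$.

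The exactness discussion, however, contains a gap. Your opening claim that $\mathrm{Tor}_1^F(M,F) = 0$ is correct, but the reason you attach to it (projectivity of $M$ over $\KK$ and freeness of $F \otimes F$ over $\KK$) is beside the point: the vanishing holds for \emph{every} right module $M$, simply because the second argument $F$ is free of rank one, hence flat, as a left $F$-module. Equivalently, as the paper observes, the surjection $F\otimes F \twoheadrightarrow F$ splits as a map of left graded $F$-modules (its target is projective over $F$), so the whole bimodule sequence is split-exact over $F$ on the left, and $M \otimes_F -$ preserves split exactness for any $M$ whatsoever. Your attempted rescue at the end is both unnecessary and flawed: the assertion that a projective graded $F$-bimodule is projective as a left $F$-module implicitly requires $F$ to be $\KK$-projective, which the lemma does not assume; and ``checking degreewise over $\KK$'' is not a meaningful way to test exactness of $M \otimes_F -$, since the graded pieces of a tensor product over $F$ are quotients, not degreewise tensor products over $\KK$. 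Replace that paragraph with the one-line split-exactness observation and the proof is complete.
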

\begin{proof}
It is well known that for any three rings $A,B,C$ and any $(A,B)$-bimodule $X$ there is a pair of adjoint functors
\begin{equation}
X\otimes_B - : {\rm Bimod}(B,C) \leftrightarrows {\rm Bimod}(A,C) : \Hom_A(X,-).
\end{equation}
In other words, there is a natural isomorphism (see \cite[\S III.11.4]{faith2012algebra}) 
\begin{equation}
\Hom_{(A,C)}(X\otimes_B Y,Z) \cong  \Hom_{(B,C)}(Y,\Hom_A(X,Z)).
\end{equation}
If we take $X=M,$ $Y=P,$ $A=\KK,$ $B=C=F,$ we obtain an isomorphism
\begin{equation}
    \Hom_F(M\otimes_F P, -  ) \cong \Hom_{F^e}(P,\Hom_\KK(M,-)).
\end{equation}
Since $M$ is projective over $\KK$ and $ P$ is projective over $F^e,$ we obtain that the functor $\Hom_{F^e}(P,\Hom_\KK(M,-))$ is exact. Therefore  $M\otimes_F P$ is a projective right graded $F$-module. Now consider a projective resolution in the category of graded bimodules $P_1\mono P_0 \epi F.$ Since $F$ is projective as a left $F$-module, the sequence splits as a short exact sequence of left graded modules. Therefore, tensoring by $M$ over $F$ we obtain a short exact sequence  $M\otimes_F P_1 \mono M\otimes_F P_0 \epi M.$ Therefore, $M\otimes_F P_1 \mono M\otimes_FP_0$ is a projective resolution of $M$ of length $1.$
\end{proof}

\begin{lemma}\label{lemma:proj_ideal}
Let $F$ be a quasi-free graded algebra and $I$ be a right homogeneous ideal of $F$ such that  $F/I$ is projective over $\KK.$  Then $I$ is a projective graded right $F$-module.
\end{lemma}
\begin{proof}
By Lemma \ref{lemma:quasi-free-proj-dim} we know that $F/I$ has projective dimension over $F$ at most one. Therefore, the short exact sequence $I\mono F\epi F/I$ implies that the projective dimension of $I$ over $F$ is  $0.$ 
\end{proof}

\begin{lemma}\label{lemma:projective_F/IJ}
Let $F$ be a quasi-free graded algebra, $J$ be a right homogeneous ideal of $F,$ and $I$ be a two-sided homogeneous ideal of $F.$ Assume that  $F/I$ and $ F/J$ are projective over $\KK.$ Then $F/JI$ is also projective over $\KK.$
\end{lemma}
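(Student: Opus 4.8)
The plan is to reduce the statement to the $\KK$-projectivity of the subquotient $I/JI$, and then to identify this subquotient with a tensor product that is visibly projective over $\KK$. First observe that $JI\subseteq I$ (since $I$ is a two-sided ideal and $J\subseteq F$) and that $JI$ is a right homogeneous submodule of $F$, because $(JI)F=J(IF)\subseteq JI$. Hence there is a short exact sequence of right graded $F$-modules, in particular of graded $\KK$-modules,
$$0\longrightarrow I/JI \longrightarrow F/JI \longrightarrow F/I \longrightarrow 0 .$$
Since $F/I$ is projective over $\KK$, this sequence splits over $\KK$, so $F/JI\cong (I/JI)\oplus(F/I)$ as $\KK$-modules. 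Thus it is enough to prove that $I/JI$ is projective over $\KK$.

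Next I would identify $I/JI$ as a tensor product. Viewing $I$ as a left graded $F$-module, the functor $-\otimes_F I$ is right exact, and applying it to $0\to J\to F\to F/J\to 0$, together with the canonical isomorphism $F\otimes_F I\cong I$, yields
$$(F/J)\otimes_F I \ \cong\ I/JI ,$$
since the image of $J\otimes_F I$ inside $F\otimes_F I\cong I$ is exactly $JI$. (Keeping track of the right $F$-action on the bimodule $I$ makes this an isomorphism of right graded $F$-modules, but only the underlying $\KK$-module statement is needed.)

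The key input is that $I$ is projective as a \emph{left} graded $F$-module. Indeed, $I$, being a two-sided ideal, is a right homogeneous ideal of $F^{op}$; the algebra $F^{op}$ is again quasi-free, because the defining condition is left–right symmetric (the swap isomorphism identifies $(F^{op})^e$ with $F^e$ and carries $\Omega^1_{F^{op}}$ to $\Omega^1_F$); and $F^{op}/I=F/I$ is projective over $\KK$. Hence Lemma~\ref{lemma:proj_ideal} applied to $F^{op}$ shows that $I$ is a projective right graded $F^{op}$-module, i.e.\ a projective left graded $F$-module. Therefore $I$ is a direct summand of a free graded left $F$-module, i.e.\ of a direct sum of (shifted) copies of $F$, and consequently $(F/J)\otimes_F I$ is a direct summand of a corresponding direct sum of (shifted) copies of $(F/J)\otimes_F F\cong F/J$. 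Since $F/J$ is projective over $\KK$, so is each shift and so is this direct sum; hence its direct summand $(F/J)\otimes_F I\cong I/JI$ is projective over $\KK$, which completes the proof.

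I expect the only genuinely delicate point to be the passage to $F^{op}$ in the third step: one must observe that quasi-freeness is left–right symmetric in order to invoke the ``opposite'' version of Lemma~\ref{lemma:proj_ideal}, and notice that this version does \emph{not} require $I$ itself to be projective over $\KK$ (only $F^{op}/I$ is, which is assumed). A more naive attempt — mimicking the tensor–hom adjunction proof of Lemma~\ref{lemma:quasi-free-proj-dim} directly on $I/JI\cong(F/J)\otimes_F I$ — would instead force one to assume that $I$ is projective as an $F$-bimodule, which is not available here.
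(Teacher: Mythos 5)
Your proof is correct, but it takes a genuinely different route from the paper's. You filter $F/JI$ via the short exact sequence
$$0\longrightarrow I/JI \longrightarrow F/JI \longrightarrow F/I \longrightarrow 0,$$
identify $I/JI\cong (F/J)\otimes_F I$, and then need $I$ to be projective as a \emph{left} $F$-module, which you obtain by passing to $F^{\rm op}$, observing the left--right symmetry of quasi-freeness, and applying Lemma~\ref{lemma:proj_ideal} on the opposite side (this is legitimate since $F^{\rm op}/I=F/I$ is $\KK$-projective by hypothesis). The paper instead filters $F/JI$ via the other natural short exact sequence
$$0\longrightarrow J/JI \longrightarrow F/JI \longrightarrow F/J \longrightarrow 0,$$
applies Lemma~\ref{lemma:proj_ideal} \emph{directly} to the right ideal $J$ (so $J$ is a projective right $F$-module), and then uses $J/JI\cong J\otimes_F (F/I)$, which is a projective right $F/I$-module, hence $\KK$-projective because $F/I$ is $\KK$-projective. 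The paper's version avoids the detour through $F^{\rm op}$ and the symmetry observation, making it slightly more economical; your version has the merit of making explicit that $I$ itself is left-projective, a fact one might want elsewhere, and your closing remark correctly diagnoses why a naive application of Lemma~\ref{lemma:quasi-free-proj-dim} would not suffice. Both arguments are sound.
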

\begin{proof}
By Lemma \ref{lemma:proj_ideal} $J$ is projective as a right $F$-module. Then $J/JI \cong J\otimes_F F/I$ is projective as a right $F/I$-module. Since $F/I$ is projective over $\KK,$ we obtain that $J/JI$ is also projective over $\KK.$ Then the short exact sequence $J/JI\mono F/JI \epi F/J$ implies that $F/JI$ is also projective over $\KK.$
\end{proof}

\begin{proposition}\label{prop:projective_products}
Let $F$ be a quasi-free graded algebra, $J$ be a right homogeneous ideal of $F,$ and $I_1,\dots,I_n$ be two-sided homogeneous ideals of $F.$ Assume that $F/J$ and $F/I_1,\dots, F/I_n$   are projective over $\KK.$  Then $F/(JI_1\dots I_n)$ is also projective over $\KK$ and $JI_1\dots I_n$ is a projective graded right $F$-module. 
\end{proposition}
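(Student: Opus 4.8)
The plan is to argue by induction on $n$, using Lemma~\ref{lemma:projective_F/IJ} as the engine to propagate projectivity over $\KK$, and invoking Lemma~\ref{lemma:proj_ideal} once at the very end to upgrade projectivity over $\KK$ to projectivity as a graded right $F$-module.

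First I would record the purely formal observations that make the induction run. Since each $I_i$ is a two-sided ideal and $J$ is a right ideal, the product $J I_1 \cdots I_k$ is again a right homogeneous ideal of $F$ for every $k$, and ideal multiplication is associative, so $J I_1 \cdots I_n = (J I_1 \cdots I_{n-1}) I_n$. Note, however, that $J I_1 \cdots I_k$ need \emph{not} be two-sided, since only $J$ is assumed to be a right ideal; this is exactly why the two-sided ideals $I_i$ are required to sit to the right of $J$ in the product, which is the precise shape in which Lemma~\ref{lemma:projective_F/IJ} applies.

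For the inductive claim I would take the statement ``$F/(J I_1 \cdots I_k)$ is projective over $\KK$''. The base case $k = 0$ is the hypothesis that $F/J$ is projective over $\KK$. For the inductive step, assuming $F/(J I_1 \cdots I_{k-1})$ is projective over $\KK$, I apply Lemma~\ref{lemma:projective_F/IJ} with the right homogeneous ideal taken to be $J I_1 \cdots I_{k-1}$ and the two-sided homogeneous ideal taken to be $I_k$: its hypotheses are satisfied because $F/I_k$ is projective over $\KK$ by assumption and $F/(J I_1 \cdots I_{k-1})$ is projective over $\KK$ by the inductive hypothesis. The conclusion is that $F/(J I_1 \cdots I_k)$ is projective over $\KK$, which closes the induction; in particular $F/(J I_1 \cdots I_n)$ is projective over $\KK$.

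Finally, since $J I_1 \cdots I_n$ is a right homogeneous ideal of $F$ with $F/(J I_1 \cdots I_n)$ projective over $\KK$, Lemma~\ref{lemma:proj_ideal} yields that $J I_1 \cdots I_n$ is a projective graded right $F$-module, which is the second assertion. I do not expect a genuine obstacle here: all the substantive content (quasi-freeness, the tensor--hom adjunction, the splitting argument) has already been packaged into Lemmas~\ref{lemma:proj_ideal} and~\ref{lemma:projective_F/IJ}, and the only point requiring care is the bookkeeping noted above, namely always keeping the one-sided ideal $J$ at the far left of every partial product so that each application of Lemma~\ref{lemma:projective_F/IJ} is legitimate.
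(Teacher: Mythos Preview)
Your proposal is correct and is exactly the argument the paper intends: the paper's proof is the single sentence ``It follows by induction from Lemma~\ref{lemma:projective_F/IJ} and Lemma~\ref{lemma:proj_ideal},'' and you have simply unpacked that induction, correctly applying Lemma~\ref{lemma:projective_F/IJ} at each step with the right ideal $JI_1\cdots I_{k-1}$ and the two-sided ideal $I_k$, and then invoking Lemma~\ref{lemma:proj_ideal} once at the end.
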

\begin{proof}
It follows by induction from Lemma \ref{lemma:projective_F/IJ} and Lemma \ref{lemma:proj_ideal}.
\end{proof}

Next, we give two examples of quasi-free algebras:
the path algebra $\KK Q$ of a quiver $Q$ and the group algebra $\KK[\mathcal F]$ of a free group $\mathcal F.$

\begin{proposition}\label{prop:path_algebra_quasifree}  Let $Q=(Q_0,Q_1,s,t)$ be a quiver with a  finite set of vertices $Q_0$ equipped with a function $|\cdot|:Q_1\to \GG.$ Then the graded path algebra $\KK Q$ is quasi-free, where the grading is defined so that the degree of a path $\alpha_1\dots \alpha_n$ is the sum $|\alpha_1|+\dots +|\alpha_n|$ and the degree of an idempotent $e_x,x\in Q_0$ is zero.  
\end{proposition}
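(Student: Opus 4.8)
The plan is to exhibit an explicit $F$-bimodule resolution of $\KK Q$ of length one. The natural candidate for $\Omega^1_{\KK Q}$ is the bimodule freely generated by the arrows: set $\Omega = \bigoplus_{\alpha \in Q_1} \KK Q\, e_{s(\alpha)} \otimes e_{t(\alpha)} \KK Q$, a free graded $\KK Q$-bimodule on generators $d\alpha$ placed in degree $|\alpha|$. Define the bimodule map $\partial : \Omega \to \KK Q \otimes \KK Q$ by $d\alpha \mapsto e_{s(\alpha)} \otimes \alpha - \alpha \otimes e_{t(\alpha)}$ (a ``universal derivation'' on the generators), and note $\mu \circ \partial = 0$, so $\partial$ lands in $\Omega^1_{\KK Q} = \Ker(\mu)$. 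The claim I would prove is that the sequence
\begin{equation}
0 \longrightarrow \Omega \xrightarrow{\ \partial\ } \KK Q \otimes \KK Q \xrightarrow{\ \mu\ } \KK Q \longrightarrow 0
\end{equation}
is exact; since $\Omega$ and $\KK Q \otimes \KK Q$ are free graded bimodules, this shows $\KK Q$ has projective dimension at most one as a bimodule, i.e.\ it is quasi-free.

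Exactness at $\KK Q$ is clear ($\mu$ is split surjective via $a \mapsto a \otimes 1$). The two things to check are that $\partial$ is injective and that $\im \partial = \Ker \mu$. For this I would use the basis of $\KK Q$ by paths. A $\KK$-basis of $\KK Q \otimes \KK Q$ is given by pairs of composable-or-not paths $(p, q)$, and I would split off the subspace spanned by pairs with $p$ trivial, which maps isomorphically onto $\KK Q$ under $\mu$; exactness then reduces to a statement about the complementary piece. Concretely, for a path $p = \alpha_1 \cdots \alpha_n$ of positive length one computes the telescoping identity
\begin{equation}
e_{s(p)} \otimes p - p \otimes e_{t(p)} = \sum_{i=1}^{n} \alpha_1\cdots\alpha_{i-1} \cdot (d\alpha_i) \cdot \alpha_{i+1}\cdots\alpha_n,
\end{equation}
which already shows that every element $u \otimes v$ with $u$ a nontrivial path can be rewritten modulo $\im\partial$ as a tensor of the form $(\text{trivial path}) \otimes (\text{path})$; combined with the description of $\Ker\mu$ this gives $\Ker\mu \subseteq \im\partial$. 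For injectivity of $\partial$, I would grade $\Omega$ by the length of the ``left part'' of a generator's coefficient and observe that $\partial$ is, up to strictly lower-order terms in this filtration, the inclusion $a (d\alpha) b \mapsto a\alpha \otimes b$ onto linearly independent basis elements, so no cancellation is possible; alternatively, compose with a splitting of $\mu$ to realize $\Omega$ as a direct summand.

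The main obstacle is purely bookkeeping: carefully setting up the path basis of $\KK Q \otimes \KK Q$, being careful about idempotents $e_x$ (so that $d\alpha$ really is supported at the right source and target and the bimodule $\Omega$ is the correct free object), and making the injectivity argument rigorous rather than hand-wavy — the telescoping identity is the conceptual heart, but turning ``lower-order terms'' into an honest induction on path length requires care. The finiteness of $Q_0$ is used only to ensure $\KK Q$ is a ring with enough idempotents behaving well; the grading hypothesis plays no role beyond making all maps homogeneous, so the proof is the same as in the ungraded case. I would remark at the end that this also recovers the standard fact that $\KK Q$ has global dimension at most the bimodule projective dimension plus nothing extra, but that is not needed here.
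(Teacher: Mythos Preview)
Your approach is essentially the paper's, but there is a genuine gap in the middle term. You take the sequence
\[
0 \longrightarrow \Omega \xrightarrow{\ \partial\ } \KK Q \otimes_\KK \KK Q \xrightarrow{\ \mu\ } \KK Q \longrightarrow 0
\]
and claim $\im\partial = \Ker\mu$. This is false whenever $|Q_0|>1$. Decompose $F\otimes_\KK F = \bigoplus_{x,y} Fe_x \otimes e_y F$; then $\mu$ kills every off-diagonal summand $Fe_x\otimes e_yF$ with $x\neq y$, so these lie in $\Ker\mu$. On the other hand, for $a\in Fe_{s(\alpha)}$, $b\in e_{t(\alpha)}F$ one has $a(d\alpha)b \mapsto a\otimes \alpha b - a\alpha\otimes b \in Fe_{s(\alpha)}\otimes e_{s(\alpha)}F \oplus Fe_{t(\alpha)}\otimes e_{t(\alpha)}F$, so $\im\partial$ is contained in the diagonal part $\bigoplus_x Fe_x\otimes e_xF$. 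Hence e.g.\ $e_x\otimes e_y$ for $x\neq y$ is in $\Ker\mu\setminus\im\partial$. (Extreme case: two vertices, no arrows; then $\Omega=0$ but $\Ker\mu$ is $2$-dimensional.) Your statement that ``the subspace spanned by pairs with $p$ trivial maps isomorphically onto $\KK Q$ under $\mu$'' is wrong for the same reason: $e_x\otimes q$ with $q$ starting at $y\neq x$ goes to zero.

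The fix is exactly what the paper does: replace the middle term by the diagonal piece $\bigoplus_{x\in Q_0} Fe_x\otimes e_xF$ (equivalently $F\otimes_S F$ with $S=(\KK Q)_0$), which is still a projective bimodule as a summand of $F\otimes F$. With that correction your telescoping identity is the right idea. The paper then proves exactness cleanly by writing down an explicit contracting homotopy of right $F$-modules,
\[
\sigma(a)=e_x\otimes a\ (a\in e_xF),\qquad
\tau(\alpha_1\cdots\alpha_n\otimes a)=\sum_{i=1}^n \alpha_1\cdots\alpha_{i-1}\otimes\alpha_i\otimes\alpha_{i+1}\cdots\alpha_n\,a,
\]
and checking $\mu\sigma=\mathrm{id}$, $\tau\partial=\mathrm{id}$, $\sigma\mu+\partial\tau=\mathrm{id}$; this handles injectivity of $\partial$ and surjectivity onto $\Ker\mu$ simultaneously and avoids the filtration bookkeeping you anticipated.
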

\begin{proof} Set $F=\KK Q$ and consider the following sequence of bimodules
\begin{equation}\label{eq:ses_KQ} 
0 \longrightarrow
\bigoplus_{\alpha\in Q_1} Fe_{s(\alpha)} \otimes \KK \alpha \otimes  e_{t(\alpha)} F
 \overset{\partial}\longrightarrow  \bigoplus_{x\in Q_0} Fe_x\otimes e_x F \overset{\mu} \longrightarrow F \longrightarrow 0,
\end{equation}
where $\mu(a\otimes b) = ab$ and $\partial(a\otimes \alpha \otimes  b) = a\alpha\otimes b - a\otimes \alpha b.$ We claim that it is a projective resolution of $F$ in the category of bimodules. Since $Fe_x\otimes e_y F$ is a direct summand of $F\otimes F,$ it is a projective $F$-bimodule. So we only need to prove that \eqref{eq:ses_KQ} is a short exact sequence. In order to do this we construct homomorphisms of right $F$-modules  $\sigma:F\to \bigoplus_{x\in Q_0} Fe_x\otimes e_x F$ and $\tau:\bigoplus_{x\in Q_0} Fe_x\otimes e_x F \to \bigoplus_{\alpha\in Q_1} Fe_{s(\alpha)} \otimes \KK \alpha \otimes  e_{t(\alpha)} F$ such that \begin{equation}\label{eq:direct_sum}
 \sigma \mu + \partial \tau={\rm id}, \hspace{5mm}  \mu \sigma = {\rm id}, \hspace{5mm} \tau \partial ={\rm id}. 
\end{equation} 
The morphisms are defined by formulas $\sigma(a)=e_x\otimes a, a\in e_xF$ and
\begin{equation}
\tau(\alpha_1\dots \alpha_n\otimes a) = \sum_{i=1}^n \alpha_1 \dots \alpha_{i-1}\otimes \alpha_i \otimes \alpha_{i+1} \dots \alpha_{n}a,    
\end{equation}
where $\alpha_i\in Q_1$ and $\alpha_1\dots\alpha_n$ is a path to a vertex $x$ and $a\in e_x F.$ Now the equations \eqref{eq:direct_sum} can be verified by a direct computation.
\end{proof}

\begin{corollary}
A free graded algebra $\KK\langle X \rangle$ with a grading defined by any map $|\cdot| : X\to \GG$ is quasi-free.
\end{corollary}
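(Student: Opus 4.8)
The plan is to realize the free graded algebra $\KK\langle X\rangle$ as a special case of the path algebra of a quiver with one vertex, and then invoke Proposition~\ref{prop:path_algebra_quasifree}. Concretely, let $Q$ be the quiver with a single vertex $Q_0 = \{*\}$ and with arrow set $Q_1 = X$, where $s(x) = t(x) = *$ for every $x\in X$. Since there is only one vertex, its idempotent $e_*$ is the identity element of $\KK Q$, so the path algebra $\KK Q$ is precisely the free associative $\KK$-algebra $\KK\langle X\rangle$: a path $x_1\cdots x_n$ in $Q$ is just a word in the letters of $X$. Any grading function $|\cdot|\colon X\to\GG$ on $X$ is the same datum as a function $|\cdot|\colon Q_1\to\GG$, and under the identification $\KK Q = \KK\langle X\rangle$ the grading of Proposition~\ref{prop:path_algebra_quasifree}, which assigns to a path $x_1\cdots x_n$ the degree $|x_1|+\dots+|x_n|$, is exactly the grading induced on $\KK\langle X\rangle$ by $|\cdot|\colon X\to\GG$.

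The one hypothesis of Proposition~\ref{prop:path_algebra_quasifree} that needs to be checked is that $Q_0$ is finite; this holds trivially since $|Q_0| = 1$. (Note that $X$ itself is allowed to be infinite, just as $Q_1$ is in the proposition.) Having verified the hypotheses, Proposition~\ref{prop:path_algebra_quasifree} gives that $\KK Q = \KK\langle X\rangle$ is quasi-free as a $\GG$-graded algebra, which is the assertion.

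There is essentially no obstacle here: the corollary is a direct specialization, and the only thing to be careful about is the bookkeeping identifying words in $X$ with paths in the one-vertex quiver and checking that the two descriptions of the grading agree. If one prefers a self-contained argument avoiding the language of quivers, one can instead write down directly the short exact sequence of $\GG$-graded $F$-bimodules
\begin{equation}
0 \longrightarrow \bigoplus_{x\in X} F\otimes \KK x\otimes F \overset{\partial}\longrightarrow F\otimes F \overset{\mu}\longrightarrow F \longrightarrow 0,
\end{equation}
with $F = \KK\langle X\rangle$, $\mu(a\otimes b) = ab$, and $\partial(a\otimes x\otimes b) = ax\otimes b - a\otimes xb$, and verify exactness using the contracting homotopy $\tau(x_1\cdots x_n\otimes a) = \sum_{i=1}^n x_1\cdots x_{i-1}\otimes x_i\otimes x_{i+1}\cdots x_n a$ together with $\sigma(a) = 1\otimes a$, exactly as in the proof of Proposition~\ref{prop:path_algebra_quasifree}; since $F\otimes F$ and $\bigoplus_{x\in X} F\otimes\KK x\otimes F$ are free $F$-bimodules, this exhibits $\Omega^1_F$ as a projective (in fact free) bimodule, so $F$ is quasi-free. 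Either route is short; I would present the quiver specialization as the main argument since the needed short exact sequence has already been established.
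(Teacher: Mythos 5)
Your proof is correct and takes the same route as the paper: the paper's proof is the single sentence that a free algebra is the path algebra of a one-vertex quiver, after which Proposition~\ref{prop:path_algebra_quasifree} applies. You have merely spelled out the identification and the grading check in more detail (and offered a direct verification as an alternative), but the underlying argument is identical.
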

\begin{proof}
A free algebra can be presented as a path algebra of a quiver with one vertex.
\end{proof}

\begin{proposition}
Let $\FF=\FF(X)$ be a free group generated by a set $X.$ Then the (non-graded) group algebra $\KK[\FF]$ is quasi-free. 
\end{proposition}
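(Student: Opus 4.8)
The plan is to show directly that the bimodule of noncommutative differentials $\Omega^1_F$ of $F := \KK[\FF]$ is a \emph{free} $F$-bimodule; by the short exact sequence $0 \to \Omega^1_F \to F\otimes F \to F \to 0$ recalled above it then suffices, as noted, that $\Omega^1_F$ be projective, and freeness is of course stronger. Recall that $\Omega^1_F$ corepresents $\KK$-linear derivations: writing $F^e = F^{op}\otimes F$ and regarding a bimodule as a right $F^e$-module, the universal derivation $d\colon F\to\Omega^1_F$, $d(a)=a\otimes 1-1\otimes a$, induces a natural isomorphism $\Hom_{F^e}(\Omega^1_F,M)\cong\mathrm{Der}_\KK(F,M)$. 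Since a free bimodule on a set $Y$ corepresents the functor $M\mapsto\prod_{y\in Y}M$, the strategy is to prove that restriction of a derivation to the free generating set $X\subseteq\FF\subseteq F$ defines a natural isomorphism $\mathrm{Der}_\KK(F,M)\cong\prod_{x\in X}M$; Yoneda then yields $\Omega^1_F\cong\bigoplus_{x\in X}F^e$, a free $F$-bimodule, whence $F$ is quasi-free.

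For injectivity of the restriction map, observe that any derivation $D$ satisfies $D(1)=0$ and the ``inverse rule'' $D(w^{-1})=-w^{-1}D(w)w^{-1}$ (differentiate $ww^{-1}=1$); combined with the Leibniz rule and $\KK$-linearity, induction on reduced-word length shows that $D$ is determined by the values $D(x)$, $x\in X$. For surjectivity (i.e.\ well-definedness of the extension), given a family $(m_x)_{x\in X}$ in $M$ form the square-zero extension $F\ltimes M$: the $\KK$-algebra with underlying $\KK$-module $F\oplus M$ and product $(a,m)(a',m')=(aa',am'+ma')$. Each element $(x,m_x)$ is invertible in $F\ltimes M$, with inverse $(x^{-1},-x^{-1}m_x x^{-1})$, so by the universal property of the free group there is a unique homomorphism $\FF\to(F\ltimes M)^\times$ sending $x$ to $(x,m_x)$; composing with the projection $F\ltimes M\to F$ shows it has the form $w\mapsto(w,D(w))$, and the resulting function $D$ is a derivation of $\FF$ that extends $\KK$-linearly to a derivation $F\to M$ with $D(x)=m_x$. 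Naturality in $M$ of this correspondence is immediate from the constructions, so the composite $\Hom_{F^e}(\Omega^1_F,-)\cong\mathrm{Der}_\KK(F,-)\cong\Hom_{F^e}(\bigoplus_{x\in X}F^e,-)$ identifies $\Omega^1_F$ with $\bigoplus_{x\in X}F^e$, and the sequence $0\to\Omega^1_F\to F\otimes F\to F\to 0$ is the desired length-one projective bimodule resolution.

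The only delicate point is the well-definedness of the extension of a prescribed assignment $x\mapsto m_x$ to a derivation of all of $F$; routing this through the square-zero extension reduces it painlessly to the universal property of $\FF$ and sidesteps explicit manipulation of reduced words. (Alternatively, one can transcribe the proof of Proposition~\ref{prop:path_algebra_quasifree} to the one-``vertex'' setting here, writing down the contracting homotopy $\tau$ on reduced words directly — the only subtlety being that an inverse letter $x^{-1}$ occurring in a reduced word $u\,x^{-1}\,v$ contributes the term $-u x^{-1}\otimes[x]\otimes x^{-1}v$, as forced by the inverse rule — but the derivation-theoretic argument above is shorter.)
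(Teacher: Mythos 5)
Your proof is correct, but it takes a genuinely different and more conceptual route than the paper's. The paper writes down the candidate resolution
\begin{equation*}
0\longrightarrow \bigoplus_{x\in X} F\otimes \KK x\otimes F \overset{\partial}\longrightarrow F\otimes F \overset{\mu}\longrightarrow F\longrightarrow 0
\end{equation*}
directly and verifies exactness by exhibiting a contracting homotopy $(\sigma,\tau)$ of right $F$-module maps, with $\tau$ defined on $x^{\pm1}\otimes 1$ (with exactly the sign you note for inverse letters) and then extended so as to satisfy the three identities $\sigma\mu+\partial\tau=\mathrm{id}$, $\mu\sigma=\mathrm{id}$, $\tau\partial=\mathrm{id}$. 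You instead prove freeness of $\Omega^1_F=\Ker\mu$ by identifying its corepresented functor: the natural isomorphism $\Hom_{F^e}(\Omega^1_F,M)\cong\mathrm{Der}_\KK(F,M)$, followed by the bijection $\mathrm{Der}_\KK(F,M)\cong\prod_{x\in X}M$ obtained by restricting to $X$, which by Yoneda forces $\Omega^1_F\cong\bigoplus_{x\in X}F\otimes F$. The two routes share the same essential content --- the inverse rule $D(x^{-1})=-x^{-1}D(x)x^{-1}$, encoded in the paper as the coefficient of $\tau(x^{-1}\otimes 1)$ and in yours as the inverse of $(x,m_x)$ in $F\ltimes M$ --- but your square-zero-extension device cleanly outsources all word-combinatorics to the universal property of the free group, while the paper's approach is more elementary and self-contained (it never needs to assert that $\Omega^1_F$ corepresents derivations or to invoke Yoneda). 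As a small bonus, your argument makes explicit the bimodule presentation $\Omega^1_F\cong\bigoplus_{x\in X}F\otimes F$ that the paper's resolution encodes implicitly, and highlights the formal analogy with the Kähler-differentials picture used for the path algebra in Proposition~\ref{prop:path_algebra_quasifree}.
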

\begin{proof} Set $F=\KK[\mathcal F]$ and
consider the following sequence of $F$-bimodules 
\begin{equation}\label{eq:free-resol}
0\longrightarrow \bigoplus_{x\in X} F\otimes \KK x\otimes F \overset{\partial}\longrightarrow  F \otimes  F \overset{\mu}\longrightarrow F \longrightarrow 0, 
\end{equation}
where $\mu(a\otimes b)=ab$ and $\partial(a\otimes x\otimes b) = ax\otimes b - a\otimes xb.$ It is easy to see that $F\otimes F$ and $F\otimes \KK x \otimes F$ are free bimodules. So we only need to show that the sequence \ref{eq:free-resol} is exact. In order to do this we construct homomorphisms of right $F$-modules  $\sigma:F\to F\otimes F$ and $\tau: F\otimes  F \to \bigoplus_{x\in X} F \otimes \KK x \otimes F$ such that the equations \eqref{eq:direct_sum} are satisfied. The map $\sigma$ is defined by the formula $\sigma(a)=1\otimes a.$ In order to define $\tau$ we first define it for an element of the form $x^\varepsilon\otimes 1,$ where $x\in X$ and $\varepsilon\in \{-1,1\}.$ It is defined by the formula
\begin{equation}
\tau(x^\varepsilon\otimes 1) = \begin{cases}
1\otimes x\otimes 1, & \varepsilon=1\\
- x^{-1} \otimes x \otimes x^{-1},& \varepsilon=-1. 
\end{cases}
\end{equation}
note that we have 
\begin{equation}\label{eq:partial_tau}
    \partial(\tau(x^\varepsilon \otimes 1)) = x^\varepsilon \otimes 1 - 1 \otimes x^\varepsilon.
\end{equation}
In general, $\tau$ is defined so that 
\begin{equation}
\tau(x_1^{\varepsilon_1} \dots x_n^{\varepsilon_n} \otimes a ) = \sum_{i=1}^n x_1^{\varepsilon_1} \dots x_{i-1}^{\varepsilon_{i-1}} \tau(x_i^{\varepsilon_i}\otimes 1) x_{i+1}^{\varepsilon_{i+1}} \dots x_n^{\varepsilon_n} a.
\end{equation}
Now, using the formula \eqref{eq:partial_tau} and the fact that $\partial$ is a bimodule homomorphism, it is easy to check the equations \eqref{eq:direct_sum}.
\end{proof}

\subsection{The Gruenberg formula}

The following theorem is a generalization of the Gruenberg resolution  \cite{gruenberg1960resolutions}.

\begin{theorem}[The Gruenberg resolution]
Let $F$ be a quasi-free graded algebra over a commutative ring $\KK$, $I$ be a  two-sided homogeneous ideal of $F$ and $J$ be a right homogeneous ideal of $F$ such that $I\subseteq J.$ Assume that $F/I$ and $F/J$ are projective $\KK$-modules and set  $A=F/I$ and $S=F/J.$ Then there is a projective resolution $P_\bullet$ of the graded right $A$-module $S$  
\begin{equation}
\dots \to  \frac{JI}{JI^2} \to  \frac{I}{I^2}  \to  \frac{J}{JI} \to  \frac{F}{I} \to  S
\end{equation}
such that
\begin{equation}
P_{2n} = \frac{I^n}{I^{n+1}}, \hspace{1cm} P_{2n+1} = \frac{JI^n}{JI^{n+1}}
\end{equation}
and the differential and the augmentation are induced by the embeddings $JI^n \hookrightarrow I^n \hookrightarrow JI^{n-1}.$
\end{theorem}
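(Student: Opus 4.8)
The plan is to build the resolution directly by splicing together short exact sequences of graded right $F$-modules, keeping track of projectivity over $F$ (hence over $A = F/I$) at each stage via Proposition~\ref{prop:projective_products}. The basic input is that, since $I \subseteq J$, we have a chain of inclusions of right ideals
\begin{equation}
\dots \subseteq JI^n \subseteq I^n \subseteq JI^{n-1} \subseteq I^{n-1} \subseteq \dots \subseteq I \subseteq J \subseteq F,
\end{equation}
where each successive inclusion has cokernel one of the modules $I^n/JI^n$, $JI^{n-1}/I^n$, etc. From the inclusions $JI^n \hookrightarrow I^n \hookrightarrow JI^{n-1}$ one extracts, for each $n \geq 0$ (with the convention $I^0 = F$), the two short exact sequences
\begin{equation}
0 \to I^n \to JI^{n-1} \to JI^{n-1}/I^n \to 0, \qquad 0 \to JI^n \to I^n \to I^n/JI^n \to 0.
\end{equation}
Splicing these and rewriting the quotients as $JI^{n-1}/I^n \cong (JI^{n-1})/(I \cdot JI^{n-1})$-type objects, one sees that the complex with $P_{2n} = I^n/I^{n+1}$, $P_{2n+1} = JI^n/JI^{n+1}$ and differentials induced by the inclusions is exact: at each spot the image of the incoming map is the submodule $JI^n$ (resp. $I^{n+1}$) of the module $I^n$ (resp. $JI^n$), which is exactly the kernel of the outgoing map to the quotient $I^n/JI^n$ (resp. $JI^n/I^{n+1}$). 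The augmentation $P_0 = F/I = A \twoheadrightarrow S = F/J$ has kernel $J/I$, matching the image of $P_1 = J/JI \to F/I$.

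What remains is projectivity of each $P_m$ as a graded right $A$-module. Here is where the hypotheses enter: by Proposition~\ref{prop:projective_products}, applied with the right ideal $J$ and the two-sided ideal $I$ repeated $n$ times, the module $JI^n$ is a projective graded right $F$-module and $F/(JI^n)$ is projective over $\KK$; the same proposition applied with $J$ replaced by $F$ (a right ideal with $F/F = 0$ projective over $\KK$, or more simply the statement $I^n$ projective over $F$, which follows from Lemma~\ref{lemma:proj_ideal} since $F/I^n$ is $\KK$-projective by the proposition) gives that $I^n$ is a projective graded right $F$-module. Consequently the quotients
\begin{equation}
P_{2n} = \frac{I^n}{I^{n+1}} \cong I^n \otimes_F \frac{F}{I}, \qquad P_{2n+1} = \frac{JI^n}{JI^{n+1}} \cong JI^n \otimes_F \frac{F}{I}
\end{equation}
are projective graded right $A$-modules, being obtained from projective $F$-modules by base change along $F \to A = F/I$ (using $I^{n+1} = I^n \cdot I$ and $JI^{n+1} = JI^n \cdot I$, so that tensoring the projective $F$-module $I^n$, resp. $JI^n$, with $F/I$ kills exactly the submodule $I^{n+1}$, resp. $JI^{n+1}$; one should check this is an honest identification, i.e. that $\mathrm{Tor}_1^F(I^n, F/I) = 0$, which holds since $I^n$ is $F$-projective).

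The main obstacle I expect is not conceptual but bookkeeping: verifying exactness of the spliced complex requires identifying, at each node, the image of one inclusion-induced map with the kernel of the next, and this rests on the elementary but slightly fiddly identities $JI^n \cap I^{n+1}$-type computations inside $F$ — concretely, that $I^n \hookrightarrow JI^{n-1}$ induces an injection on $I^n/JI^n$ into $JI^{n-1}/JI^n$ with cokernel $JI^{n-1}/I^n$, and likewise one step down. Once the diagram is set up correctly these are immediate from the inclusion chain, but stating them cleanly (and handling the edge case $n = 0$, where $I^0 = F$ and $JI^{-1}$ should be read as $F$ as well, so that $P_0 = F/I$, $P_1 = J/JI$) is the part that needs care. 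A secondary point requiring a line of justification is that all the modules in sight are graded and all maps homogeneous, so that the resolution lives in the category of graded right $A$-modules; this is automatic since $I$ and $J$ are homogeneous ideals and all constructions (products, intersections, quotients, the multiplication map) preserve the grading.
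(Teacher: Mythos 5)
Your proof is correct and follows the paper's argument exactly: exactness of the complex comes directly from splicing the inclusion chain $JI^{n}\subseteq I^{n}\subseteq JI^{n-1}$, and projectivity of each $P_m$ over $A$ follows from Proposition~\ref{prop:projective_products} (which gives $F$-projectivity of $I^n$ and $JI^n$) together with base change along $F\to A=F/I$. One small cosmetic point: the identification $I^n\otimes_F F/I\cong I^n/I^{n+1}$ needs no $\Tor$-vanishing --- for any right $F$-module $M$ one has $M\otimes_F F/I\cong M/MI$ by right exactness of tensor, so that aside can simply be dropped.
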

\begin{proof}
The fact that the chain complex $\dots \to  P_1\to P_0\to S\to 0$ is exact is obvious. Proposition \ref{prop:projective_products} implies that $I^n,JI^n$ are projective $F$-modules. Therefore $I^n/I^{n+1}, JI^n/JI^{n+1}$ are projective $A$-modules.  
\end{proof}

If $A$ is a graded algebra, and $M,M'$ are a right and left graded $A$-modules, then we can consider a bigraded Tor functor
\begin{equation}
\Tor^A_{*,*}(M,M')    
\end{equation}
which is defined as $\Tor^A_{n,g}(M,M')=(H_n(P_\bullet\otimes_A M'))_g$ for $n\in \ZZ$ and $g\in \GG,$ where $P_\bullet$ is a graded projective resolution of $M$ and $\otimes_A$ denoted the graded tensor product over $A$ (see \cite[\S 7.2]{asao2024magnitude}). 

The following theorem is a generalization of the classical Gruenberg formula \cite[\S 3.7]{gruenberg2006cohomological} and a generalization of a similar statement of Govorov in the case of graded algebras \cite[Lemma 1]{govorov1973dimension}.

\begin{theorem}[The Gruenberg formula] \label{theorem:Gruenberg_formulas}
Let $F$ be a quasi-free graded algebra over a commutative ring $\KK$, $I$ be a two-sided homogeneous ideal of $F$, $J$ be a right homogeneous ideal of $F,$ and $J'$ be a left homogeneous ideal of $F$ such that $I\subseteq J$ and $I\subseteq J'.$ Assume that $F/I$ and $F/J$ are projective $\KK$-modules and set  
\begin{equation}
A=F/I, \hspace{5mm} S=F/J, \hspace{5mm}  S'=F/J'.    
\end{equation}
Then for $n\geq 0$ there are isomorphisms of graded modules 
\begin{equation}\label{eq:gruenberg_formulas}
\Tor_{2n,*}^A(S,S')\cong \frac{I^n \cap JI^{n-1}J'}{JI^n + I^nJ'}, \hspace{1cm} \Tor^A_{2n+1,*}(S,S') \cong \frac{JI^n \cap I^n J'}{ I^{n+1}+JI^nJ'}. 
\end{equation}
\end{theorem}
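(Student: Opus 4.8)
The plan is to feed the Gruenberg resolution from the previous theorem into the definition of $\Tor^A_{*,*}(S,S')$ and read off the homology. First I would take the projective resolution $P_\bullet \to S$ of graded right $A$-modules with $P_{2n} = I^n/I^{n+1}$ and $P_{2n+1} = JI^n/JI^{n+1}$, and apply $- \otimes_A S'$. Since $S' = F/J'$ and each $P_m$ is a cyclic-type module of the form $K/K'$ with $K' = KI$ for the appropriate ideal $K$, I expect $P_m \otimes_A S' \cong K/(KI + KJ')$; concretely $P_{2n} \otimes_A S' \cong I^n/(I^{n+1} + I^n J')$ and $P_{2n+1} \otimes_A S' \cong JI^n/(JI^{n+1} + JI^n J')$. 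Verifying this identification of the tensor product is the first routine-but-essential step: one uses that $M \otimes_A (F/J') \cong M/MJ'$ for a right $A$-module $M$, together with the description $M = K/KI$ so that $MJ' = (KJ' + KI)/KI$.

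Next I would write down the differentials of the complex $P_\bullet \otimes_A S'$. The differential $P_{2n} \to P_{2n-1}$ is induced by the inclusion $I^n \hookrightarrow JI^{n-1}$, and $P_{2n+1} \to P_{2n}$ by $JI^n \hookrightarrow I^n$; after tensoring these become the maps induced by the same inclusions between the quotients above. So the complex looks like
\begin{equation}
\cdots \to \frac{JI^n}{JI^{n+1} + JI^nJ'} \to \frac{I^n}{I^{n+1} + I^nJ'} \to \frac{JI^{n-1}}{JI^n + JI^{n-1}J'} \to \cdots
\end{equation}
Now I would compute the homology at each spot by the standard ``numerator of kernel over numerator of image'' bookkeeping. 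At the even spot $I^n/(I^{n+1}+I^nJ')$: an element $x \in I^n$ is a cycle iff its image in $JI^{n-1}/(JI^n + JI^{n-1}J')$ vanishes, i.e. $x \in JI^n + JI^{n-1}J'$; intersecting with $I^n$ and remembering $JI^n \subseteq I^n$ gives the cycle representatives $I^n \cap (JI^n + JI^{n-1}J')$, and a small modular-law argument (using $JI^n \subseteq I^n$) simplifies this to $JI^n + (I^n \cap JI^{n-1}J')$. The boundaries from $P_{2n+1}$ are the image of $JI^n$, namely $JI^n + I^{n+1} + I^nJ'$. Taking the quotient and cancelling the common $JI^n$ yields $(I^n \cap JI^{n-1}J')/(I^{n+1} + I^nJ' \ \text{intersected appropriately})$ — and here I would push the modular law once more to land exactly on $\frac{I^n \cap JI^{n-1}J'}{JI^n + I^nJ'}$, which is the claimed even formula. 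The odd spot is handled symmetrically, swapping the roles of $I^n \hookrightarrow JI^{n-1}$ with $JI^n \hookrightarrow I^n$ and of the two boundary ideals, giving $\frac{JI^n \cap I^nJ'}{I^{n+1} + JI^nJ'}$. The case $n = 0$ (where $I^{n-1}$ should be read as $F$, so $JI^{-1}J' = JJ'$... rather $JI^{n-1}J'$ at $n=0$ is $JF^{-1}... $) needs a separate sanity check: at $n=0$ the even formula should reduce to $\Tor_0 = S \otimes_A S' = F/(J+J')$, which it does since $I^0 = F$ and $JI^{-1}J'$ is interpreted as $JJ'$... actually one must be careful and simply verify the low-degree terms of the resolution by hand.

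The main obstacle I anticipate is the bookkeeping with the modular law: each homology computation requires identifying a subquotient of the form $(X \cap (Y + Z))/W$ with something cleaner, and this only works because of containments like $JI^n \subseteq I^n \subseteq JI^{n-1}$ and $I^{n+1} \subseteq JI^n$, which let one apply $X \cap (Y+Z) = Y + (X \cap Z)$ when $Y \subseteq X$. Keeping track of which ideal is contained in which, and making sure the ``diagonal'' intersection term $I^n \cap JI^{n-1}J'$ (resp. $JI^n \cap I^nJ'$) survives intact rather than getting absorbed, is the delicate point; everything else — exactness of the Gruenberg resolution, projectivity of the terms, the tensor-product identification — is quoted from the previous theorem or is formal. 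I would also double-check that all the ideals appearing are homogeneous so that the isomorphisms are genuinely isomorphisms of $\GG$-graded modules, which follows since $I$, $J$, $J'$ are homogeneous and products/intersections/sums of homogeneous submodules are homogeneous.
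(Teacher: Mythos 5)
Your proposal is correct and follows the same route as the paper: apply $-\otimes_A S'$ to the Gruenberg resolution $P_{2n}=I^n/I^{n+1}$, $P_{2n+1}=JI^n/JI^{n+1}$, and read off kernels and images. One simplification you miss (which the paper uses silently) is that the hypotheses $I\subseteq J$ and $I\subseteq J'$ immediately give $I^{n+1}\subseteq I^nJ'$ and $JI^{n+1}\subseteq JI^nJ'$, so $P_{2n}\otimes_A S'\cong I^n/I^nJ'$ and $P_{2n+1}\otimes_A S'\cong JI^n/JI^nJ'$ outright; likewise $JI^n\subseteq JI^{n-1}J'$ collapses the target of the even differential to $JI^{n-1}/JI^{n-1}J'$. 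Once you write the complex with these cleaner terms, the kernel at the even spot is directly $(I^n\cap JI^{n-1}J')/I^nJ'$ and the image from above is $(JI^n+I^nJ')/I^nJ'$, so the identification of the homology is immediate and the modular-law manipulations you anticipate as the ``delicate point'' are not actually needed. Your version still lands on the right answer, it just carries extra terms through the computation.
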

\begin{proof}
For any right graded $A$-module $M$ we have $M\otimes_A S'=M/MJ'.$ Let $P_\bullet$ be the Gruenberg resolution whose components are  $P_{2n}=I^n/I^{n+1}$ and $P_{2n+1}=JI^n/JI^{n+1}.$ Then $P_{2n}\otimes_A S'\cong I^n/I^nJ'$ and $P_{2n+1}\otimes_A S'\cong JI^n/JI^nJ'.$ Therefore we have 
\begin{equation}
\begin{split}
\Ker(P_{2n}\otimes_A S' \to P_{2n-1}\otimes_AS' )&\cong (I^n \cap JI^nJ')/I^nJ',\\
\Im(P_{2n+1}\otimes_A S' \to P_{2n}\otimes_A S' ) & \cong (JI^n+I^nJ')/I^nJ',\\
\Ker(P_{2n+1}\otimes_A S' \to P_{2n}\otimes_A S') &\cong (JI^n\cap I^nJ')/JI^nJ',\\
\Im(P_{2n+2}\otimes_A S'\to P_{2n+1}\otimes_A S' )&\cong (I^{n+1}+JI^nJ')/JI^nJ'.
\end{split}
\end{equation}
This proves the claim. 
\end{proof}

\begin{remark}[Naturality of the Gruenberg formulas]\label{remark:Naturalness_of_Gruenberg_formulas}
The Gruenberg formulas from Theorem \ref{theorem:Gruenberg_formulas} are natural in $S$ and $S'$ in the following sense. Assume we have another two ideals $\tilde J, \tilde J'$ satisfying the same assumptions as $J,J'$  such that $J\subseteq \tilde  J$ and $J'\subseteq \tilde J'.$ If we set $\tilde S=F/\tilde J$ and $\tilde S'=F/\tilde J',$ we have epimorphisms of $A$-modules $S\epi \tilde S$ and $S'\epi \tilde S',$ that induce a morphism
\begin{equation}\label{eq:tor_naturallnes}
 \Tor^A_{*,*}(S,S') \to \Tor^A_{*,*}(\tilde S, \tilde S').   
\end{equation}
On the other hand, we have morphisms 
\begin{equation}\label{eq:gruenberg_naturallnes}
  \frac{I^n \cap JI^{n-1}J'}{JI^n + I^nJ'} \longrightarrow \frac{I^n \cap \tilde JI^{n-1}\tilde J'}{\tilde JI^n + I^n\tilde J'}, \hspace{1cm}  \frac{JI^n \cap I^n J'}{ I^{n+1}+JI^nJ'} \to \frac{\tilde JI^n \cap I^n \tilde J'}{ I^{n+1}+ \tilde JI^n\tilde J'} 
\end{equation}
induced by inclusions. Then the isomorphisms \eqref{eq:gruenberg_formulas} are consistent with morphisms \eqref{eq:tor_naturallnes} and \eqref{eq:gruenberg_naturallnes}, in the sense that the corresponding diagrams are commutative.
\end{remark}

As a corollary we obtain the classical result of Gruenberg.

\begin{corollary}
Let $\mathcal F$ be a free group and $\mathcal G=\mathcal F/\mathcal R$ be its quotient group. Denote by $\bf f$ the augmentation ideal of the group algebra $\KK[\mathcal F]$ and set ${\bf r}=\Ker(\KK[\mathcal F]\to \KK[\mathcal G] ).$ Then there are isomorphisms
\begin{equation}
H_{2n}(\mathcal G,\KK)\cong \frac{{\bf r}^n \cap {\bf f}{\bf r}^{n-1}{\bf f}}{{\bf f}{\bf r}^n + {\bf r}^n{\bf f}}, \hspace{1cm} H_{2n+1}(\mathcal G,\KK) \cong \frac{{\bf f}{\bf r}^n \cap {\bf r}^n {\bf f}}{ {\bf r}^{n+1}+{\bf f}{\bf r}^n{\bf f}}. 
\end{equation}
\end{corollary}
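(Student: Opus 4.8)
The plan is to deduce this from Theorem~\ref{theorem:Gruenberg_formulas} by taking the appropriate specialization. First I would set $F = \KK[\mathcal{F}]$, which is quasi-free by the proposition proved just above (the group algebra of a free group is quasi-free), and then choose the ideals: $I = {\bf r} = \Ker(\KK[\mathcal F] \to \KK[\mathcal G])$ as the two-sided homogeneous ideal, and $J = J' = {\bf f}$ the augmentation ideal. Here the grading group is trivial, $\GG = 0$, so ``graded'' imposes no condition and ``homogeneous'' is automatic; all of these are honest two-sided ideals. The inclusion $I \subseteq J$ (and $I \subseteq J'$) holds because ${\bf r}$ lies in the augmentation ideal: every element of $\KK[\mathcal F]$ mapping to zero in $\KK[\mathcal G]$ in particular has augmentation zero (the augmentation factors through $\KK[\mathcal G]$). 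The projectivity hypotheses $F/I$ and $F/J$ projective over $\KK$ are satisfied since $F/I = \KK[\mathcal G]$ and $F/J = \KK$ are free $\KK$-modules.

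Next I would identify the relevant Tor groups with group homology. With $A = F/I = \KK[\mathcal G]$, $S = F/J = \KK$, and $S' = F/J' = \KK$ (both the trivial module), we have $\Tor^A_{n}(S, S') = \Tor^{\KK[\mathcal G]}_n(\KK, \KK) = H_n(\mathcal G, \KK)$, which is the standard homological description of group homology. Substituting $I \rightsquigarrow {\bf r}$, $J \rightsquigarrow {\bf f}$, $J' \rightsquigarrow {\bf f}$ into the formulas \eqref{eq:gruenberg_formulas} of Theorem~\ref{theorem:Gruenberg_formulas} then yields
\begin{equation*}
H_{2n}(\mathcal G, \KK) \cong \frac{{\bf r}^n \cap {\bf f}{\bf r}^{n-1}{\bf f}}{{\bf f}{\bf r}^n + {\bf r}^n {\bf f}}, \qquad H_{2n+1}(\mathcal G, \KK) \cong \frac{{\bf f}{\bf r}^n \cap {\bf r}^n {\bf f}}{{\bf r}^{n+1} + {\bf f}{\bf r}^n {\bf f}},
\end{equation*}
which is exactly the claimed statement.

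Honestly, there is no real obstacle here: the corollary is a pure specialization, and the only things to check are the bookkeeping items above — that $\KK[\mathcal F]$ is quasi-free (already proved), that ${\bf r} \subseteq {\bf f}$, and that the quotients are $\KK$-projective. The one point deserving a sentence of care is the identification $\Tor^{\KK[\mathcal G]}_n(\KK,\KK) \cong H_n(\mathcal G,\KK)$ with the correct trivial-module structures on both sides (the right $A$-module $S = F/J$ and the left $A$-module $S' = F/J'$ are both the trivial module $\KK$ because ${\bf f}$ annihilates the augmentation), but this is entirely standard. I would also remark that for $\KK = \ZZ$ this recovers the classical Gruenberg formulas quoted in the introduction.
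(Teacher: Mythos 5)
Your proof is correct and is exactly the specialization the paper has in mind: the paper itself gives no explicit proof for this corollary, presenting it as an immediate consequence of Theorem~\ref{theorem:Gruenberg_formulas} applied with $F=\KK[\mathcal F]$ (quasi-free by the preceding proposition), $I=\mathbf{r}$, $J=J'=\mathbf{f}$, and the trivial grading group. All your bookkeeping checks (the inclusion $\mathbf{r}\subseteq\mathbf{f}$, $\KK$-freeness of $\KK[\mathcal G]$ and $\KK$, and the identification $\Tor^{\KK[\mathcal G]}_n(\KK,\KK)\cong H_n(\mathcal G,\KK)$) are the right ones.
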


\section{The Gruenberg formula for the magnitude homology} 

In this section we recall the basic information about the magnitude homology groups of digraphs $\MH_{n,\ell}(G)$ and about the decomposition into a direct sum of directed components  $\MH_{n,\ell}(G) = \bigoplus_{x,y} \MH_{n,\ell}(G)_{x,y}$. We will recall the theorem about the presentation of the magnitude homology groups as a Tor functor
$\MH_{n,\ell}(G) = \Tor^{\sigma G}_{n,\ell}(S,S)$
and prove a version of it for the directed components 
\begin{equation}
\MH_{n,\ell}(G)_{x,y} = \Tor^{\sigma G}_{n,\ell}(S_x,S_y).
\end{equation}
Using this we will prove the Gruenberg formula for the magnitude homology groups (Theorem \ref{theorem:Gruenberg_for_magnitude}).
We will also demonstrate the computational convenience of this formula with examples  (Example \ref{example:computation1}, Example \ref{example:computation2}).

\subsection{Magnitude homology} Let $G$ be a digraph. For any vertices $x,y$ of $G$ we denote by $d(x,y)$ the infimum of lengths of paths from $x$ to $y.$ For a tuple of vertices $(x_0,\dots,x_n)$ we say that $n$ is the length of the tuple and 
\begin{equation}
|x_0,\dots,x_n| = \sum_{i=1}^{n-1} d(x_i,x_{i+1})
\end{equation}
is the norm of the tuple. For any nonnegative integer $\ell$ the chain complex of $\KK$-modules  $\MC_{\bullet,\ell}(G)$ is defined so that 
\begin{equation}
\MC_{n,\ell}(G) = \KK\cdot \{(x_0,\dots,x_n)\mid |x_0,\dots,x_n| = \ell, x_i\ne x_{i+1} \}
\end{equation}
and the differential is defined by the formula $\partial_n=\sum_{i=0}^{n}(-1)^i \partial_{n,i},$ where 
\begin{equation}\label{eq:d_ni}
\partial_{n,i}(x_0,\dots,x_n)=
\begin{cases}
(x_0,\dots,\hat x_i,\dots,x_n), & |x_0,\dots, \hat x_i, \dots ,x_n|=\ell \\
0, & \text{otherwise.}
\end{cases}
\end{equation}
Note that $\partial_{n,0}=\partial_{n,n}=0.$ The magnitude homology with coefficients in a commutative ring $\KK$ is defined as the homology of this complex $\MH_{n,\ell}(G)=H_n( \MC_{\bullet,\ell}(G)).$ If we need to specify the commutative ring $\KK,$ we use the notations $\MC_{\bullet,\ell}(G,\KK)$ and $\MH_{n,\ell}(G,\KK).$

Following \cite[Prop.2.9]{asao2021geometric} \cite[Th.5.11]{kaneta2021magnitude},  \cite{asao2024girth}, \cite[Lemma 9.10]{hepworth2024bigraded} for any two vertices $x,y$ of $G$ we consider a free submodule $\MC_{n,\ell}(x,y)\subseteq \MC_{n,\ell}(G)$ generated by tuples of the form $(x,x_1,\dots,x_{n-1},y).$ It is easy to see that these submodules form a chain subcomplex $\MC_{\bullet,\ell}(x,y)\subseteq \MC_{\bullet,\ell}(G)$ and there is a decomposition 
\begin{equation}
\MC_{\bullet,\ell}(G)= \bigoplus_{x,y} \MC_{\bullet,\ell}(G)_{x,y}.
\end{equation}
This decomposition induces a decomposition of homology groups
\begin{equation}\label{eq:direct_decomposition}
\MH_{n,\ell}(G) \cong \bigoplus_{x,y} \MH_{n,\ell}(G)_{x,y}.
\end{equation}
The group $\MH_{n,\ell}(G)_{x,y}$ will be referred to as the directed component of the magnitude homology (in the direction from $x$ to $y$). 

The magnitude cohomology is defined by the formula $\MH^{n,\ell}(G)=H^n( \MC^{\bullet,\ell}(G)),$ where
$\MC^{\bullet,\ell}(G) = \Hom_{\KK}( \MC_{\bullet,\ell}(G),\KK).$
So, we can also consider the complexes $\MC^{\bullet,\ell}(G)_{x,y}=\Hom_\KK(\MC_{\bullet,\ell}(G)_{x,y},\KK),$ define the directed magnitude cohomology  $\MH^{n,\ell}(G)_{x,y}$ as their cohomology and obtain 
\begin{equation} \MC^{\bullet,\ell}(G)\cong \prod_{x,y} \MC^{\bullet,\ell}(G)_{x,y}, \hspace{5mm}
 \MH^{n,\ell}(G) = \prod_{x,y} \MH^{n,\ell}(G)_{x,y}.   
\end{equation}

Recall \cite{leinster2019magnitude} that the magnitude ${\rm Mag}(G)$ of a finite digraph $G$ with $m$ vertices can be defined as an integral formal power series in $\ZZ[\![q]\!],$
\begin{equation}
    {\rm Mag}(G) = \sum_{x,y} (Z^{-1}_G)_{x,y},
\end{equation}
where $Z_G$ is an $m\times m$ matrix over $\ZZ[\![q]\!]$, indexed by vertices, defined by $(Z_G)_{x,y}=q^{d(x,y)},$ assuming $q^\infty=0.$ 
The motivation for introducing the magnitude homology in \cite{hepworth2017categorifying} was that for a field $\KK$ we have 
\begin{equation}
\Mag(G) = \sum_{n,\ell} (-1)^n \dim_\KK (\MH_{n,\ell}(G))\cdot q^\ell.
\end{equation}
Let us prove a component-wise version of this formula. 

\begin{proposition}\label{prop:matrix} Let $G$ be a finite digraph and $\KK$ be a field. Then for any $\ell\geq 0$ and vertices $x,y$ we have
\begin{equation}\label{eq:mafnitude_lxy}
(Z_G^{-1})_{x,y} = \sum_{n,\ell} (-1)^n \dim_\KK (\MH_{n,\ell}(G)_{x,y}) \cdot q^\ell.
\end{equation}
\end{proposition}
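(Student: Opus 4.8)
The plan is to refine the known global identity $\Mag(G) = \sum_{n,\ell}(-1)^n \dim_\KK(\MH_{n,\ell}(G))\, q^\ell$ into its directed components, using the decomposition \eqref{eq:direct_decomposition} and the fact that the $Z_G$-matrix formalism respects the splitting of the chain complex $\MC_{\bullet,\ell}(G)$ into the pieces $\MC_{\bullet,\ell}(G)_{x,y}$. Concretely, I would first fix $x,y$ and observe that $\MC_{n,\ell}(G)_{x,y}$ is the free $\KK$-module on tuples $(x,x_1,\dots,x_{n-1},y)$ of norm $\ell$ with consecutive entries distinct, and that since $\KK$ is a field, the Euler characteristic of the complex equals the alternating sum of homology dimensions; so the right-hand side of \eqref{eq:mafnitude_lxy} equals $\sum_{n,\ell}(-1)^n \dim_\KK(\MC_{n,\ell}(G)_{x,y})\, q^\ell$. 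Thus it suffices to prove the ``chain-level'' identity
\begin{equation}
(Z_G^{-1})_{x,y} = \sum_{n,\ell}(-1)^n \#\{(x,x_1,\dots,x_{n-1},y) : |x,x_1,\dots,x_{n-1},y| = \ell,\ x_i \neq x_{i+1}\}\, q^\ell,
\end{equation}
where $\#$ denotes cardinality. (Note the rank of $\MC_{n,\ell}(G)_{x,y}$ is finite since $G$ is finite.)

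The second step is to identify the generating-function content of $(Z_G^{-1})_{x,y}$. Write $Z_G = I + N_G$, where $(N_G)_{x,y} = q^{d(x,y)}$ for $x\neq y$ and $(N_G)_{x,x}=0$; since each off-diagonal entry is divisible by $q$ (distances between distinct vertices are $\geq 1$, with $q^\infty = 0$), the matrix $N_G$ is topologically nilpotent in the $q$-adic sense, so the Neumann series $Z_G^{-1} = \sum_{m\geq 0}(-1)^m N_G^m$ converges in $\ZZ[\![q]\!]$. Expanding the $(x,y)$-entry of $N_G^m$ gives
\begin{equation}
(Z_G^{-1})_{x,y} = \sum_{m\geq 0}(-1)^m \sum_{x=z_0,\,z_1,\dots,\,z_m = y} q^{d(z_0,z_1)+\dots+d(z_{m-1},z_m)},
\end{equation}
where the inner sum runs over tuples with $z_{i-1}\neq z_i$ for every $i$ (the vanishing of the diagonal of $N_G$ enforces exactly this constraint). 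Reindexing a summand by $n = m$ and $(x_1,\dots,x_{n-1}) = (z_1,\dots,z_{m-1})$, the exponent $d(z_0,z_1)+\dots+d(z_{m-1},z_m)$ is precisely the norm $|x,x_1,\dots,x_{n-1},y|$, and the sign $(-1)^m = (-1)^n$. Grouping summands by $(n,\ell)$ with $\ell$ the common norm yields exactly the chain-level identity above, and the proposition follows.

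The main obstacle is bookkeeping rather than conceptual: one must be careful that the $m=0$ term corresponds to the tuple $(x)$ with $x=y$ (contributing $1$ to the diagonal entry, matching the rank-one module $\MC_{0,0}(G)_{x,x}=\KK$), that the inner sums are finite for each fixed power of $q$ so that the interchange of summation is legitimate in $\ZZ[\![q]\!]$, and that the distinctness condition $z_{i-1}\neq z_i$ exactly reproduces the condition $x_i\neq x_{i+1}$ defining the magnitude chain groups. Summing \eqref{eq:mafnitude_lxy} over all pairs $(x,y)$ recovers the original formula of Hepworth--Willerton, which serves as a consistency check.
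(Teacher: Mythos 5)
Your proof is correct and follows essentially the same path as the paper's: reduce via the Euler characteristic to a chain-level identity, write $Z_G = I + \tilde Z_G$ with $\tilde Z_G$ the off-diagonal part, expand the Neumann series $Z_G^{-1}=\sum_m(-1)^m\tilde Z_G^m$, and identify the $(x,y)$-entry of $\tilde Z_G^m$ with the generating function counting tuples of norm $\ell$ with distinct consecutive entries. The only superficial difference is your explicit mention of the $q$-adic convergence and the $m=0$ bookkeeping, which the paper leaves implicit.
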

\begin{proof}
First we note that $\MC_{n,\ell}(G)=0$ for $n>\ell,$ and the sum in the formula \eqref{eq:mafnitude_lxy} is finite. Since $\sum_n (-1)^n \dim_\KK (\MC_{n,\ell}(G)_{x,y})=\sum_n (-1)^n \dim_\KK (\MH_{n,\ell}(G)_{x,y}),$ it is sufficient to prove that $\Mag_\ell(G)_{x,y} = \sum_n (-1)^n \dim_\KK (\MC_{n,\ell}(G)_{x,y}).$ The matrix $Z_G$ can be presented as $Z_G=I+\tilde Z_G,$ where $I$ is the identity matrix and $(\tilde Z_G)_{x,y}=q^{d(x,y)}$ for $x\neq y$ and $(\tilde Z_G)_{x,x}=0.$ 
Since all components of $\tilde Z_G$ are divisible by $q,$ the components of $\tilde Z_G^n$ are divisible by $q^n,$ and we obtain $Z_G^{-1}=\sum_n (-1)^n \tilde Z_G^n.$ It is easy to check by induction that $(\tilde Z_G^n)_{x,y}=\sum_{x=x_0\ne \dots \ne x_n=y} q^{|x_0,\dots,x_n|},$ where the sum runs over all tuples  $(x_0,\dots,x_n)$ such that $x_i\neq x_{i+1}$ for any $i$ and $x_0=x,x_n=y.$ Therefore, $(\tilde Z_G^n)_{x,y}=\sum_\ell  \dim_\KK (\MC_{n,\ell}(G)_{x,y}) \cdot q^\ell.$ The result follows. 
\end{proof}

\subsection{Non-normalized magnitude chain complex} 
For a general simplicial $\KK$-module $X$, one can associate two chain complexes. The non-normalized chain complex $C(X)$ whose components are $C_n(X)=X_n$ and the differentials are defined by $\partial_n=\sum_{i}(-1)^i \partial_{n,i},$ where $\partial_{n,i}:X_n\to X_{n-1}$ are the face maps. And the normalized chain complex $N(X),$ which is a quotient of $C(X)$ and it has components $N_n(X)=X_n/(\sum_i \sigma_{n-1,i}(X_{n-1})),$ where $\sigma_{n-1,i}:X_{n-1}\to X_n$ are the degeneracy maps. It is well known that the map $C(X) \epi N(X)$ is a chain homotopy equivalence. In particular $H_*(X):=H_*(C(X))\cong H_*(N(X)).$ 

The chain complex $\MC_{\bullet,\ell}(G)$ is a normalized chain complex of a simplicial $\KK$-module $\widetilde{\MC}_{\bullet,\ell}(G)$ freely generated by the magnitude simplicial set $\mathscr{M}^\ell(G)$ defined in \cite[(1.9)]{di2024path} (see also \cite[Definition 5.7]{leinster2021magnitude}).  The components of $\widetilde{\MC}_{\bullet,\ell}(G)$ are generated by tuples with possible consecutive repetitions 
\begin{equation}
\widetilde{\MC}_{\bullet,\ell}(G)=\KK\cdot \{(x_0,\dots,x_n)\mid |x_0,\dots,x_n|=\ell\}. 
\end{equation}
The face maps $\partial_{n,i}$ of $\widetilde{\MC}_{\bullet,\ell}(G)$ are defined by \eqref{eq:d_ni} and the degeneracy maps $\sigma_{n,i}$ are defined by $\sigma_{n,i}(x_0,\dots,x_n)=(x_0,\dots,x_i,x_i,\dots,x_n).$ In this case $\partial_{n,0}$ and $\partial_{n,n}$ are nontrivial maps: $\partial_{n,0}(x_0,\dots,x_n)$ is nontrivial if and only if $x_0=x_1,$ and $\partial_{n,n}(x_0,\dots,x_n)$ is nontrivial if and only if $x_{n-1}=x_n$. Therefore $\widetilde \MC_{\bullet,\ell}(G)$ is chain homotopy equivalent to $\MC_{\bullet,\ell}(G)$ and 
\begin{equation}
\MH_{n,\ell}(G) \cong H_n( \widetilde \MC_{\bullet,\ell}(G)).
\end{equation} 
Applying $\Hom_\KK(-,\KK)$ to chain homotopy equivalent complexes we obtain homotopy equivalent complexes. Therefore
\begin{equation}
\MH^{n,\ell}(G) \cong H^n(  \widetilde{\MC}^{\bullet,\ell}(G)),
\end{equation}
where $\widetilde{\MC}^{\bullet,\ell}(G)=\Hom_\KK(\widetilde{\MC}_{\bullet,\ell}(G),\KK).$

Note that we can similarly define simplicial submodules
\begin{equation}
\widetilde{\MC}_{\bullet,\ell}(G)_{x,y} \subseteq \widetilde{\MC}_{\bullet,\ell}(G)
\end{equation}
and obtain that the chain complex $\widetilde{\MC}_{\bullet,\ell}(G)_{x,y}$ is homotopy equivalent to $\MC_{\bullet,\ell}(G)_{x,y}$ and 
\begin{equation}
\MH_{n,\ell}(G)_{x,y} \cong   H_n(\widetilde{\MC}_{\bullet,\ell}(G)_{x,y}).\end{equation}
We also have 
\begin{equation}
\MH^{n,\ell}(G)_{x,y} \cong  H^n( \widetilde \MC^{\bullet,\ell}(G)_{x,y}),
\end{equation}
where $\widetilde \MC^{\bullet,\ell}(G)_{x,y}\cong \Hom_\KK( \widetilde \MC_{\bullet,\ell}(G)_{x,y},\KK).$

\subsection{Magnitude homology is a  derived functor}

For a finite digraph $G$ we consider the graded path algebra $\KK G.$ A path in $G$ is a tuple of vertices $p=(p_0,\dots,p_n),$ such that $(p_i,p_{i+1})$ is an arrow in $G$ and $n\geq 0.$ The number $n$ is called the length of the path. In particular, we consider paths of length zero $(x),$ which we also denote by $e_x=(x).$ Then $\KK G$ is freely spanned over $\KK$ by paths. The multiplication is defined by the concatenation, if it is defined, and zero otherwise. The element $1:=\sum_{x\in X} e_x $ is the unit of the algebra. The grading is defined so that $(\KK G)_n$ is spanned by paths of length $n.$ A path $(p_0,\dots,p_n)$ is called a shortest path if $d(p_0,p_n)=n,$ and long path otherwise. Consider an ideal $R$ of $\KK G$ defined by the following relations
\begin{itemize}
\item[(R1)] $p=q,$ if $p$ and $q$ are two different shortest paths connecting the same vertices.
\item[(R2)] $p=0,$ if $p$ is a long path.
\end{itemize}
A long path $p=(p_0,\dots,p_n)$ is called a minimal long path if $(p_0,\dots,p_{n-1})$ and $(p_1,\dots,p_n)$ are shortest paths. Then (R2) can be replaced by 
\begin{itemize}
    \item[(R2')] $p=0,$ if $p$ is a minimal long path.
\end{itemize}

We denote by $J$ the ideal of $\KK G$ generated by all paths of length $\geq 1$ and set
$S=\KK G/J.$

The distance algebra $\sigma G$ is freely spanned by pairs $(x,y)$ of vertices of $G$ such that there is a path from $x$ to $y.$ The product is defined so that 
\begin{equation}
    (x,y)(y',z) = 
    \begin{cases}
       (x,z), & y=y', \  d(x,y)+d(y,z)=d(x,z)\\
       0,& \text{otherwise.}
    \end{cases}
\end{equation}
The grading is defined by $|(x,y)|=d(x,y).$ If we need to specify $\KK$ we denote $\sigma G$ by $\sigma_\KK G.$
The map $\KK G\to \sigma G$ that sends a shortest path $(x_0,\dots,x_n)$ to $(x_0,x_n),$ and a long path to zero, defines an isomorphism \cite[Prop.6.1]{asao2024magnitude}
\begin{equation}
\KK G/R \cong \sigma G.
\end{equation}

\begin{theorem}[{\cite[Th.6.2]{asao2024magnitude}}]\label{theorem:magnitude_as_derived}
Let $\KK$ be a commutative ring and $G$ be a finite digraph. Then there are isomorphisms
\begin{equation}\label{eq:derived_functors}
\MH_{n,\ell}(G)\cong {\rm Tor}^{\KK G/R}_{n,\ell} (S,S), \hspace{1cm} \MH^{n,\ell}(G)\cong \Ext^{n,\ell}_{\KK G/R}(S,S). 
\end{equation}
\end{theorem}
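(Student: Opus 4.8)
The plan is to exhibit the normalised magnitude chain complex $\MC_{\bullet,\ell}(G)$ as the internal-degree-$\ell$ part of the reduced bar complex of the distance algebra $\sigma G=\KK G/R$ over its degree-zero subalgebra $S=(\sigma G)_0$; then $\MH_{n,\ell}(G)\cong\Tor^{\sigma G}_{n,\ell}(S,S)$ is simply the computation of $\Tor$ via a bar resolution, and the $\Ext$ statement follows by dualising.

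First I would set $A=\sigma G$ and let $\bar A=A_{\geq 1}=\bigoplus_{d(x,y)\geq 1}\KK\cdot(x,y)$ be the augmentation ideal, a graded $S$-bimodule, and form the one-sided normalised bar resolution of the right $A$-module $S$,
\[
P_n=\bar A^{\otimes_S n}\otimes_S A,\qquad \cdots\to P_1\to P_0=A\to S .
\]
Since $G$ is finite we have $S=\prod_x\KK e_x$, and $\bar A$ is free (in each graded degree) as a right $S$-module; hence $\bar A^{\otimes_S n}$ is a projective graded right $S$-module, so $P_n$ is a projective graded right $A$-module, and the usual contracting homotopy built from the augmentation $A\to S$ shows that $P_\bullet\to S$ is a resolution. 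Tensoring over $A$ with the left module $S$ collapses the trailing factor, $P_n\otimes_A S\cong\bar A^{\otimes_S n}$, carrying the reduced bar differential — an alternating sum of the maps that multiply two adjacent $A$-factors, whose two outermost faces vanish because the augmentation kills $\bar A$.

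Next I would identify this complex with $\MC_{\bullet,\ell}(G)$ in each internal degree. A $\KK$-basis of $(\bar A^{\otimes_S n})_\ell$ consists of the tensors $(x_0,x_1)\otimes(x_1,x_2)\otimes\cdots\otimes(x_{n-1},x_n)$ with $x_{i-1}\neq x_i$ for all $i$ and $\sum_i d(x_{i-1},x_i)=\ell$, which is precisely the generating set $(x_0,\dots,x_n)$ of $\MC_{n,\ell}(G)$. Under this bijection, the map multiplying the $i$-th and $(i+1)$-th tensor factors sends such a tensor to $(x_0,x_1)\otimes\cdots\otimes(x_{i-1},x_{i+1})\otimes\cdots\otimes(x_{n-1},x_n)$ when $d(x_{i-1},x_i)+d(x_i,x_{i+1})=d(x_{i-1},x_{i+1})$, and to $0$ otherwise, by the definition of the product in $\sigma G$. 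But this distance identity is exactly the condition that deleting $x_i$ from the tuple preserves its norm, and when it holds the new consecutive pair $(x_{i-1},x_{i+1})$ is automatically distinct (its distance is $\geq 2$); so this map is precisely the face $\partial_{n,i}$ of \eqref{eq:d_ni}, and the vanishing of the two outermost bar faces matches $\partial_{n,0}=\partial_{n,n}=0$. Hence $\MC_{\bullet,\ell}(G)\cong(\bar A^{\otimes_S\bullet})_\ell$ as chain complexes, so $\MH_{n,\ell}(G)\cong\Tor^{\sigma G}_{n,\ell}(S,S)$. For cohomology, apply graded $\Hom_A(-,S)$ to $P_\bullet$: adjunction gives $\Hom_A(\bar A^{\otimes_S n}\otimes_S A,S)\cong\Hom_S(\bar A^{\otimes_S n},S)$, and since $G$ is finite and $S=\prod_x\KK e_x$ this is $\Hom_\KK\!\big((\bar A^{\otimes_S n})_\ell,\KK\big)=\MC^{n,\ell}(G)$ in internal degree $\ell$, with the transposed differential; taking cohomology yields $\MH^{n,\ell}(G)\cong\Ext^{n,\ell}_{\sigma G}(S,S)$.

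The one genuinely technical point is the first step — verifying that the graded bar construction over the non-semisimple base ring $S$ really yields a projective resolution (two-sided freeness of $\bar A$ over $S$ in each degree, exactness of the augmented bar complex) while keeping the left/right/bimodule and internal-grading bookkeeping straight. Everything after that is the dictionary ``multiply two adjacent factors in $\sigma G$'' $\longleftrightarrow$ ``delete a vertex from the tuple unless doing so shortens the walk'', which is immediate from the definition of $\sigma G$. (One could instead work with the non-normalised bar complex and the simplicial $\KK$-module $\widetilde\MC_{\bullet,\ell}(G)$, but the reduced bar complex already lands on $\MC_{\bullet,\ell}(G)$ directly.)
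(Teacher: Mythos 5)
Your proposal is correct and follows the same underlying route as the source the paper defers to. The theorem is cited here from \cite[Th.~6.2]{asao2024magnitude} without reproof; in the proof of the directed refinement (Theorem~\ref{theorem:directed_magnitude_defived}) the paper invokes, from \cite[Rem.~5.4]{asao2024magnitude}, the resolution $P_\bullet$ with $(P_n)_\ell=\KK\cdot\{(x_0,\dots,x_n,y)\mid|x_0,\dots,x_n,y|=\ell\}$, which is precisely the \emph{non}-normalized one-sided bar resolution $A^{\otimes_S n}\otimes_S A$ of $S$ over $A=\sigma G$, and then identifies $P_\bullet\otimes_{\sigma G}S$ with the non-normalized chain complex $\widetilde{\MC}_{\bullet,*}(G)$. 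You instead take the normalized bar resolution $\bar A^{\otimes_S n}\otimes_S A$, which lands directly on $\MC_{\bullet,\ell}(G)$ and thus bypasses the comparison between $\MC_{\bullet,\ell}(G)$ and $\widetilde{\MC}_{\bullet,\ell}(G)$; your dictionary between the bar face maps and the magnitude boundary, including the vanishing of the two outer faces and the observation that a surviving merged pair $(x_{i-1},x_{i+1})$ has distance $\geq 2$ and hence remains in $\bar A$, is correct, as is the dualization step for $\Ext$. One small imprecision worth flagging: $\bar A$ is projective but in general not free as a right graded $S$-module --- each $\bar A e_y$ is a multiple of the projective summand $\KK e_y\subseteq S$, and these multiplicities vary with $y$ --- but projectivity is all you use, so the argument is unaffected.
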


Let us prove a version of this theorem for the directed components of the magnitude homology. Note that $S$ can be decomposed into the direct sum of modules 
\begin{equation}
S=\bigoplus_{x\in G_0} S_x,
\end{equation}
where $S_x=\KK e_x.$ We can present $S_x$ as
$S_x = \KK G/J_x,$
where $J_x$ is the ideal generated by all paths except $e_x.$ In other words 
$J_x=J+\sum_{y\ne x} \KK e_y.$

Assume that $\varphi:V\overset{\cong}\to U$ are isomorphic modules and each of them is isomorphic to a direct sum indexed by the same set of indexes $V\cong \bigoplus_{a\in \mathcal A} V_a$ and $U\cong \bigoplus_{a\in \mathcal A} U_a,$ where $\varphi_{a,b} : V_a \overset{\cong}\to U_a.$ We say that the isomorphisms $\varphi_{a,b}$ are compatible with the isomorphism $\varphi$ if the diagram 
\begin{equation}
\begin{tikzcd}
V\ar[r] \ar[d,"\varphi"] & \bigoplus_{a} V_a \ar[d,"\bigoplus_{a} \varphi_a "] \\
U \ar[r] & \bigoplus_{a} U_a
\end{tikzcd}
\end{equation}
is commutative.

\begin{theorem}\label{theorem:directed_magnitude_defived}
Let $\KK$ be a commutative ring and $G$ be a finite digraph and $x,y$ be vertices of $G.$ Then there are isomorphisms
\begin{equation}
\MH_{n,\ell}(G)_{x,y}\cong \Tor^{\KK G/R}_{n,\ell} (S_x,S_y), \hspace{1cm} \MH^{n,\ell}(G)_{x,y}\cong \Ext^{n,\ell}_{\KK G/R}(S_x,S_y) 
\end{equation}
which are compatible with the isomorphisms \eqref{eq:derived_functors} with respect to the decompositions defined by the decomposition $S=\bigoplus_{x} S_x.$
\end{theorem}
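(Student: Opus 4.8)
The plan is to deduce Theorem~\ref{theorem:directed_magnitude_defived} from the already-established Theorem~\ref{theorem:magnitude_as_derived} by showing that all the relevant isomorphisms are compatible with the direct sum decomposition $S = \bigoplus_x S_x$ on both sides. First I would recall that $A := \KK G/R$ is a graded algebra with $A_0 = S$, and that $S$ decomposes as a direct sum of the one-dimensional $A$-modules $S_x = \KK e_x$ indexed by the (finitely many) vertices of $G$; correspondingly $1 = \sum_x e_x$ is an orthogonal decomposition of the unit into primitive idempotents. The key structural point is that this decomposition is compatible with the whole homological apparatus: one can choose a graded projective resolution $P_\bullet \epi S$ of $A$-modules which decomposes as $P_\bullet = \bigoplus_x P_\bullet^{(x)}$, where $P_\bullet^{(x)} \epi S_x$ (for instance, take the bar resolution, which splits according to the idempotents $e_x$, or use that $Ae_x$-summands of a projective decompose the resolution). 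Then $\Tor^A_{n,\ell}(S, S) = H_n(P_\bullet \otimes_A S)$, and since $P_\bullet \otimes_A S = \bigoplus_{x,y} P_\bullet^{(x)} \otimes_A S_y$ as complexes, taking homology commutes with the finite direct sum, giving $\Tor^A_{n,\ell}(S,S) \cong \bigoplus_{x,y} \Tor^A_{n,\ell}(S_x, S_y)$, and this isomorphism is by construction induced by the inclusions/projections of the summands.

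Second I would trace through the isomorphism of Theorem~\ref{theorem:magnitude_as_derived} to check it carries the chain-level direct sum decomposition $\MC_{\bullet,\ell}(G) = \bigoplus_{x,y} \MC_{\bullet,\ell}(G)_{x,y}$ to the decomposition $P_\bullet \otimes_A S = \bigoplus_{x,y} P_\bullet^{(x)} \otimes_A S_y$. Here one uses the explicit comparison between the magnitude chain complex and the bar complex of $A$ that underlies the proof in \cite[Th.~6.2]{asao2024magnitude}: a generating tuple $(x_0, \dots, x_n)$ of $\widetilde{\MC}_{\bullet,\ell}(G)_{x,y}$ (so $x_0 = x$, $x_n = y$) corresponds to a tensor $e_x \otimes \overline{(\cdots)} \otimes \cdots \otimes e_y$ in the bar complex, which lives precisely in the $(x,y)$-summand $e_x \cdot (\text{bar complex}) \cdot e_y$. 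So the identification is manifestly ``diagonal'' in the vertex indices, and passing to homology yields the claimed compatibility with \eqref{eq:direct_decomposition} and \eqref{eq:derived_functors}.

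Third, the cohomological statement is handled the same way: applying $\Hom_\KK(-,\KK)$ to the finite direct sum $\MC_{\bullet,\ell}(G) = \bigoplus_{x,y}\MC_{\bullet,\ell}(G)_{x,y}$ turns it into a finite product (equivalently, direct sum) of the dual complexes, so $\MH^{n,\ell}(G) = \prod_{x,y}\MH^{n,\ell}(G)_{x,y}$, and on the $\Ext$ side one uses $\Ext^{n,\ell}_A(S,S) = H^n(\Hom_A(P_\bullet, S))$ together with $\Hom_A\bigl(\bigoplus_x P_\bullet^{(x)}, \bigoplus_y S_y\bigr) = \prod_{x,y}\Hom_A(P_\bullet^{(x)}, S_y)$ (a finite product), again commuting with cohomology. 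The compatibility of the resulting isomorphisms with those of \eqref{eq:derived_functors} is inherited from the chain-level compatibility established in the previous step, since dualizing a commutative square of complexes gives a commutative square.

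The main obstacle I anticipate is not any deep difficulty but rather the bookkeeping: one must be careful that the isomorphism of Theorem~\ref{theorem:magnitude_as_derived} is set up so that it genuinely respects the idempotent decomposition at the chain level, rather than only after passing to homology — i.e. one needs the comparison quasi-isomorphism between $\MC_{\bullet,\ell}(G)$ and the relevant resolution-derived complex to be a direct sum of comparison maps indexed by $(x,y)$. This is true because the magnitude simplicial set $\mathscr M^\ell(G)$ and the bar construction both split over pairs of endpoints, but it should be stated explicitly. Once that is in place, everything else is the elementary fact that finite direct sums (and finite products) commute with homology and cohomology, and that all the maps in sight are induced by the structural inclusions and projections, so the required diagrams commute automatically.
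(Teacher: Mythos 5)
Your proposal is correct and follows essentially the same route as the paper: decompose a projective resolution of $S$ over $\sigma G$ along the idempotents $e_x$, observe that the resulting tensor (resp.\ hom) complexes split over pairs $(x,y)$, and check that this splitting matches the chain-level decomposition of the magnitude complex. The only minor difference is that the paper uses the specific explicit resolution $P_\bullet$ from \cite[Rem.~5.4]{asao2024magnitude}, for which $P^x_\bullet \otimes_{\sigma G} S_y \cong \widetilde{\MC}_{\bullet,*}(G)_{x,y}$ holds on the nose, whereas you invoke the bar resolution and would then need to trace the comparison quasi-isomorphism — a step you correctly flag as the main thing to verify.
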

\begin{remark}
In the statement of Theorem \ref{theorem:directed_magnitude_defived} in the formula for homology we treat $S_x$ as a right module and $S_y$ as a left module; but in the formula for cohomology $S_x$ and $S_y$ are both treated as right modules
\end{remark}
\begin{proof}[Proof of Theorem \ref{theorem:directed_magnitude_defived}]
In this proof, we will work over $\sigma G$ rather than over $\KK G/R.$ Then $S_x$ can be defined as $S_x = \sigma  G/J'_x,$ where $J'_x$ is generated by all pairs $(y,z)\ne (x,x).$ In \cite[Rem.5.4]{asao2024magnitude} the authors constructed a projective resolution $P_\bullet$ of the right graded module $S=\bigoplus_{x\in G_0} S_x$  such that 
\begin{equation}
(P_n)_\ell =\KK\cdot  
\{(x_0,\dots,x_n,y)\mid |x_0,\dots,x_n,y|=\ell \},
\end{equation}
and the differential is defined by $\partial_n=\sum_{i=0}^n (-1)\partial_{n,i},$ where $\partial_{n,i}$ is defined by 
\begin{equation}
\partial_{n,i}(x_0,\dots,x_n,y)= 
\begin{cases}
(x_0,\dots,\hat x_i,\dots,x_n,y), & |x_0,\dots,\hat x_i,\dots,x_n,y|=|x_0,\dots,x_n,y|\\
0, & \text{otherwise.}
\end{cases}
\end{equation}
The structure of a $\sigma G$-module on $P_n$ is defined by $(x_0,\dots,x_n,y)\cdot (y',z)=(x_0,\dots,x_n,z),$ if $|x_0,\dots,x_n,y| + |y,z| =|x_0,\dots,x_n,z|$ and $y=y',$ and by $(x_0,\dots,x_n,y)\cdot (y',z)=0$ otherwise.  The map $P_0\to S$ is defined by $(x_0,y)\mapsto (x_0),$ if $x_0=y,$ and $(x_0,y)\mapsto 0$ otherwise. For each $x\in G_0$ consider a submodule $P_n^x$ of $P_n$ spanned by tuples $(x_0,\dots,x_n,y)$ such that $x_0=x.$ Then 
\begin{equation}
P_\bullet = \bigoplus_{x\in G_0} P^x_\bullet
\end{equation}
and $P^x_\bullet$ is a projective resolution of $S_x.$ It is easy to see that the embeddings and projections $S_x\to S \to S_x$ can be lifted to the corresponding embeddings and projections for the resolutions $P^x_\bullet \to P_\bullet \to P^x_\bullet.$ For any graded $\sigma G$-module $M$ we have 
$M\otimes_{\sigma G} S_y=M/MJ'_y$ and $\Hom_{\sigma G}(M,S_y)=\Hom_\KK(M/MJ'_y,\KK).$ Using these formulas, it is easy to check that 
\begin{equation}
P^x_\bullet\otimes_{\sigma G} S_y \cong \widetilde{\MC}_{\bullet,*}(G)_{x,y}, \hspace{5mm} \Hom_{\sigma G}(P^x_{\bullet},S_y)\cong \widetilde{\MC}^{\bullet,*}(G)_{x,y}.
\end{equation}
Similarly we obtain the isomorphisms 
\begin{equation}
P_\bullet\otimes_{\sigma G} S \cong \widetilde{\MC}_{\bullet,*}(G), \hspace{5mm} \Hom_{\sigma G}(P_{\bullet},S)\cong \widetilde{\MC}^{\bullet,*}(G).
\end{equation}
Hence the theorem follows. 
\end{proof}

\subsection{The Gruenberg formula}

Note that $\KK G=\bigoplus_{x,y} e_x (\KK G)e_y.$ For any ideal $\mathfrak{a}$ of $\KK G,$ this decomposition induces a decomposition $\mathfrak{a} = \bigoplus_{u,v} e_v\mathfrak{a}e_u.$ Therefore, for any two ideals $\mathfrak{b}\subseteq \mathfrak{a} \subseteq \KK G$ we have 
\begin{equation}\label{eq:a/b}
   \frac{\mathfrak{a}}{\mathfrak{b}} \cong \bigoplus_{x,y}  \frac{e_x \mathfrak{a}e_y}{e_x \mathfrak{b}e_y}.
\end{equation}

\begin{theorem}\label{theorem:Gruenberg_for_magnitude}
For any finite digraph $G,$ any commutative ring $\KK$ and any $n\geq 0$ there are isomorphisms of graded $\KK$-modules
\begin{equation}\label{eq:gruenberg_magnitude}
\MH_{2n,*}(G)\cong \frac{R^n \cap JR^{n-1}J}{JR^n + R^nJ}, \hspace{1cm} \MH_{2n+1,*}(G) \cong \frac{JR^n \cap R^n J}{ R^{n+1}+JR^nJ}. 
\end{equation}
Moreover, for any vertices $x,y$ there are isomorphisms 
\begin{equation}\label{eq:Gruenberg_even_directed}
\MH_{2n,*}(G)_{x,y}\cong \frac{e_x(R^n \cap JR^{n-1}J)e_y}{e_x(JR^n + R^nJ)e_y}  
\end{equation}
and
\begin{equation} \label{eq:Gruenberg_odd_directed}
\MH_{2n+1,*}(G)_{x,y} \cong \frac{e_x(JR^n \cap R^n J)e_y}{e_x( R^{n+1}+JR^nJ)e_y}
\end{equation}
compatible with the isomorphisms \eqref{eq:gruenberg_magnitude}. 
\end{theorem}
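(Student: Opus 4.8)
The plan is to recognize all four families of isomorphisms as instances of the abstract Gruenberg formula (Theorem~\ref{theorem:Gruenberg_formulas}) for the path algebra. Take $F=\KK G$ with the $\ZZ$-grading in which every arrow has degree $1$; by Proposition~\ref{prop:path_algebra_quasifree} this graded algebra is quasi-free. I would first verify the standing hypotheses with $I=R$, with $J$ regarded as a right ideal and $J'=J$ as a left ideal: the ideal $R$ is generated by homogeneous elements of positive degree, so it is a two-sided homogeneous ideal and $R\subseteq J$; the ideal $J$ is two-sided and homogeneous; and $F/R\cong\sigma G$, $F/J=S$ are free $\KK$-modules — on the pairs of vertices at finite distance, respectively on the idempotents $e_x$ — hence projective over $\KK$. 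Theorem~\ref{theorem:Gruenberg_formulas} then yields $\Tor^{\sigma G}_{2n,*}(S,S)\cong (R^n\cap JR^{n-1}J)/(JR^n+R^nJ)$ and $\Tor^{\sigma G}_{2n+1,*}(S,S)\cong (JR^n\cap R^nJ)/(R^{n+1}+JR^nJ)$, and composing with the identification $\MH_{n,\ell}(G)\cong\Tor^{\KK G/R}_{n,\ell}(S,S)$ of Theorem~\ref{theorem:magnitude_as_derived} gives \eqref{eq:gruenberg_magnitude}.

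For the directed components I would work with the explicit Gruenberg resolution $P_\bullet$ of $S$ over $A=\sigma G$, where $P_{2n}=R^n/R^{n+1}$, $P_{2n+1}=JR^n/JR^{n+1}$, and the differentials are induced by the inclusions $JR^n\hookrightarrow R^n\hookrightarrow JR^{n-1}$. Because $R^n$ and $JR^n$ are two-sided ideals, left multiplication by the idempotents $e_x$ splits each $P_k$ as a right $A$-module into $\bigoplus_x P^x_k$ with $P^x_{2n}=e_xR^n/e_xR^{n+1}$ and $P^x_{2n+1}=e_xJR^n/e_xJR^{n+1}$; each $P^x_k$ is then a direct summand of the projective $P_k$, and $P^x_\bullet$ is a projective resolution of $S_x$, the decomposition $P_\bullet=\bigoplus_x P^x_\bullet$ being the one induced by $S=\bigoplus_x S_x$. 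Hence $\Tor^A_{n,*}(S_x,S_y)=H_n(P^x_\bullet\otimes_A S_y)$. Writing $S_y=F/J'_y$ with $J'_y=J+\sum_{z\ne y}\KK e_z$ (a left homogeneous ideal containing $R$) and using $M\otimes_A S_y=M/MJ'_y$, a short computation — using $R^{n+1}\subseteq R^nJ'_y$ and the bimodule decomposition $R^n=\bigoplus_{u,v}e_uR^ne_v$ — identifies $P^x_{2n}\otimes_A S_y$ with $e_xR^ne_y/e_xR^nJe_y$ and $P^x_{2n+1}\otimes_A S_y$ with $e_xJR^ne_y/e_xJR^nJe_y$, the differentials still being induced by $JR^n\hookrightarrow R^n\hookrightarrow JR^{n-1}$. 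Computing kernels and images of these maps, and using that $R^n$, $JR^n$, $R^nJ$ and $JR^{n-1}J$ are all two-sided (so that the operation $e_x(-)e_y$ commutes with the relevant sums and intersections), produces \eqref{eq:Gruenberg_even_directed} and \eqref{eq:Gruenberg_odd_directed}; the identification of the left-hand sides with $\MH_{\bullet,*}(G)_{x,y}$ is Theorem~\ref{theorem:directed_magnitude_defived}.

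Compatibility with \eqref{eq:gruenberg_magnitude} I would deduce for free: the decompositions $P_\bullet=\bigoplus_x P^x_\bullet$ and $S=\bigoplus_x S_x$ used above are exactly the ones underlying Theorem~\ref{theorem:directed_magnitude_defived}, and on the formula side they correspond term by term to the decomposition $\mathfrak a/\mathfrak b\cong\bigoplus_{x,y}e_x\mathfrak a e_y/e_x\mathfrak b e_y$ of \eqref{eq:a/b}; alternatively one may invoke the naturalness of the Gruenberg formulas (Remark~\ref{remark:Naturalness_of_Gruenberg_formulas}) applied to the inclusions $J\subseteq J_x$ and $J\subseteq J'_y$. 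Beyond quoting Theorem~\ref{theorem:Gruenberg_formulas}, the only genuine work is the bookkeeping in the previous paragraph — verifying that tensoring the $x$-th summand of the Gruenberg resolution with $S_y$ reproduces precisely the $(x,y)$-entry $e_x(-)e_y$ of the non-directed complex, so that the homology computation can be carried out entry by entry. This is the step I expect to be the main (if routine) obstacle, and it goes through smoothly because $R$, $J$ and all their relevant products are two-sided ideals, so every object involved is a sub-bimodule of $\KK G$ and the ``entry'' operation $e_x(-)e_y$ is exact on the diagrams at hand.
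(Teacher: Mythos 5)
Your argument is correct, and for the undirected formulas \eqref{eq:gruenberg_magnitude} it coincides with the paper's proof: apply Theorem~\ref{theorem:Gruenberg_formulas} with $F=\KK G$, $I=R$, $J=J'$ and compose with Theorem~\ref{theorem:magnitude_as_derived}.

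For the directed formulas \eqref{eq:Gruenberg_even_directed}--\eqref{eq:Gruenberg_odd_directed} your route is genuinely different from the paper's, and somewhat more economical. The paper re-invokes the abstract Gruenberg formula (Theorem~\ref{theorem:Gruenberg_formulas}) with the ideals $J_x$ and $J_y$ in place of $J$ and $J'$, producing
\begin{equation*}
\MH_{2n,*}(G)_{x,y}\cong \frac{R^n \cap J_xR^{n-1}J_y}{J_xR^n + R^nJ_y},
\end{equation*}
and then needs two extra ingredients to convert this into the $e_x(\,\cdot\,)e_y$ form: the naturality statement of Remark~\ref{remark:Naturalness_of_Gruenberg_formulas}, showing the projection $\MH_{2n,*}(G)\epi\MH_{2n,*}(G)_{x,y}$ is realized by the inclusion-induced map between quotients, followed by a direct kernel computation (using $e_xJ=e_xJ_x$ and $Je_y=J_ye_y$) to match that map with the projection onto the $(x,y)$-summand of \eqref{eq:a/b}. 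You bypass both of these steps by splitting the Gruenberg resolution $P_\bullet$ itself as $\bigoplus_x e_xP_\bullet$ — legitimate since $R^n$, $JR^n$ are $F$-bimodules, so each $e_xP_k$ is a direct $A$-module summand of the projective $P_k$, and $e_xP_\bullet\to S_x$ is a resolution — and then tensoring $e_xP_\bullet$ with $S_y$ produces the $e_x(\,\cdot\,)e_y$ quotients outright. This trades the naturality/kernel bookkeeping for the short computation identifying $e_xP_{2n}\otimes_A S_y\cong e_xR^ne_y/e_xR^nJe_y$ (and its odd analogue), which you carry out correctly. The compatibility claim at the end is also fine: the decomposition of $\Tor^{\sigma G}(S,S)$ depends only on the decomposition $S=\bigoplus_x S_x$, not on which compatibly-decomposed resolution one uses, so your splitting of the Gruenberg resolution induces the same decomposition as the tuple resolution underlying Theorem~\ref{theorem:directed_magnitude_defived}. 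One small point worth making explicit if you write this up: when you pass from $e_xR^n/(e_xR^{n+1}+e_xR^nJ_y)$ to $e_xR^ne_y/e_xR^nJe_y$ you use both $R^{n+1}\subseteq R^nJ$ and the decomposition $e_xR^n=\bigoplus_z e_xR^ne_z$, and the latter is compatible with $e_xR^nJ=\bigoplus_z e_xR^nJe_z$ precisely because $J$ is a bimodule; you gesture at this, but it is the load-bearing observation.
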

\begin{proof}
The isomorphisms \eqref{eq:gruenberg_magnitude} follow from Theorem \ref{theorem:Gruenberg_formulas}, Proposition \ref{prop:path_algebra_quasifree}, Theorem \ref{theorem:magnitude_as_derived} and the fact that $\sigma G\cong \KK G/R$ and $S$ are free over $\KK.$ 

Let us prove the isomorphism   \eqref{eq:Gruenberg_even_directed} and note that the isomorphism \eqref{eq:Gruenberg_odd_directed} can be proved similarly. Theorem \ref{theorem:Gruenberg_formulas} and Theorem \ref{theorem:directed_magnitude_defived} imply the isomorphism 
\begin{equation}
\MH_{2n,*}(G)_{x,y}\cong \frac{R^n \cap J_xR^{n-1}J_y}{J_xR^n + R^nJ_y}. 
\end{equation}
Naturality of the Gruenberg formulas 
(Remark \ref{remark:Naturalness_of_Gruenberg_formulas}) and the fact that the isomorphisms of Theorem \ref{theorem:directed_magnitude_defived} are compatible with the isomorphism from Theorem \ref{theorem:magnitude_as_derived} imply that the projection $\MH_{2n,*}(G)\epi \MH_{2n,*}(G)_{x,y}$ is isomorphic to the map induced by embeddings
\begin{equation}\label{eq:map-gruenberg_even}
\frac{R^n \cap JR^{n-1}J}{JR^n + R^nJ} \longrightarrow   \frac{R^n \cap J_xR^{n-1}J_y}{J_xR^n + R^nJ_y}.
\end{equation}
The isomorphism \eqref{eq:a/b} gives a decomposition 
\begin{equation}
  \frac{R^n \cap JR^{n-1}J}{JR^n + R^nJ} \  \cong \  \bigoplus_{v,u} \frac{e_v(R^n \cap JR^{n-1}J)e_u}{e_v(JR^n + R^nJ)e_u}. 
\end{equation}
Therefore, in order to prove the isomorphism \eqref{eq:Gruenberg_even_directed}, it is sufficient to prove that the kernel of \eqref{eq:map-gruenberg_even} contains $(e_v(R^n \cap JR^{n-1}J)e_u)/(e_v(JR^n + R^nJ)e_u)$ for $(v,u)\neq (x,y),$ and intersects trivially with $(e_x(R^n \cap JR^{n-1}J)e_y)/(e_x(JR^n + R^nJ)e_y).$ Indeed, if $(v,u)\ne (x,y)$ and $a\in e_v(R^n \cap JR^{n-1}J)e_u,$ then $a\in e_v R^n e_u,$ and hence $a\in J_xR^n+R^n J_y.$ On the other hand we see that $e_xJ=e_xJ_x$ and $Je_y=J_ye_y.$ So, if $a\in e_x(R^n \cap JR^{n-1}J)e_y$ and $a\in J_xR^n+R^nJ_y,$ then $a=e_xae_y,$ and hence $a\in e_x(J_xR^n+R^nJ_y)e_y=e_x(JR^n+R^nJ)e_y.$
\end{proof}

\subsection{Examples of computations}

In this subsection for convenience of computations we simplify notations for the idempotents $x:=e_x.$

\begin{example}\label{example:computation1} In this example we use the Gruenberg formula to show that $\MH_{3,4}(G,\ZZ)$ is nontrivial for the following digraph.
\begin{equation}
\begin{tikzpicture}[baseline]
\tikzset{
vertex/.style={circle,
inner sep=0pt, outer sep=1pt,
minimum width=3pt, 
}} 
\node[vertex] (a0) at (-0.5,0.5) {$c_0$};
\node[vertex] (a1) at (0.5,0.5) {$a$};
\node[vertex] (a2) at (0.5,-0.5) {$c_1$};
\node[vertex] (a3) at (-0.5,-0.5) {$b$};
\draw[-{Stealth}] (a0) -- (a1);
\draw[-{Stealth}] (a1) -- (a3);
\draw[-{Stealth}] (a3) -- (a2);
\draw 
(a1) -- (a2)
(a0) -- (a3)
;
\end{tikzpicture}
\end{equation} 
Here an undirected edge means two arrows in opposite directions. There are two paths of length $4$ from $a$ to $b:$ $p_{i}=(a,b,c_i,a,b)$ for $i=0,1.$ We will prove that  
\begin{equation}
 \MH_{3,4}(G,\ZZ)_{a,b}\cong 
 \left( \frac{a(RJ\cap JR)b}{a(R^2+JRJ)b}\right)_4  = \frac{ \langle p_0,p_2\rangle}{ \langle p_0-p_2 \rangle }  \cong \ZZ.
\end{equation}
 Since $(a,b,c_i,a)$ and $(b,c_i,a,b)$ are long paths, they are in $R,$ and we obtain $p_i\in RJ\cap JR.$  Therefore $a(RJ\cap JR)_4b=\langle  p_0, p_1 \rangle.$ 
 
 Since $(R^2)_4=R_2R_2,$ we obtain we obtain $a(R^2)_4b =\sum_{i\in \{0,1\}} aR_2c_iR_2b.$ However, $c_iR_2b=0,$ because there is only one path of length $2$ from $c_i$ to $b$. Therefore $a(R^2)_4b=0.$
 
Using that  $(JRJ)_4 = J_1R_2J_1$ and that all paths of length $4$ from $1$ to $3$ have the form $(a,b,?,a,b),$ we obtain $aJ_1R_2J_1b=(a,b)R_2(a,b).$ There are only two paths from $b$ to $a$ of length $2,$ and they are shortest: $(b,c_i,a).$ Therefore $bR_2a =\langle  (b,c_0,a)-(b,c_1,a) \rangle.$ Hence $a(JRJ)_4b= \langle p_0-p_1\rangle.$ 
\end{example}
\begin{example} \label{example:computation2}
In this example we will similarly prove that $\MH_{3,4}(G,\ZZ)$ is nontrivial for the following undirected graph.
\begin{equation}
\begin{tikzpicture}[baseline]
\tikzset{
vertex/.style={circle,
inner sep=0pt, outer sep=1pt,
minimum width=3pt}}
\node[vertex] (a0) at (-1,1) {$c$};
\node[vertex] (a1) at (1,1) {$b_0$};
\node[vertex] (a2) at (1,-1) {$a$};
\node[vertex] (a3) at (-1,-1) {$b_1$};
\node[vertex] (b1) at (0.4,0.4) {$d_0$};
\node[vertex] (b2) at (0.4,-0.4) {$e$};
\node[vertex] (b3) at (-0.4,-0.4) {$d_1$};
\draw[black] 
(a0) -- (a1) -- (a2) -- (a3) -- (a0)
(a0) -- (b1) -- (b2) -- (b3) -- (a0)
(a1) -- (b1)
(a2) -- (b2)
(a3) -- (b3)
;
\end{tikzpicture}
\end{equation}
There are $4$ paths from $a$ to $e$ of length $4:$ $p_{ij}=(a,b_i,c,d_j,e),$ where $i,j\in \{0,1\}.$ We will prove that 
\begin{equation}
 \MH_{3,4}(G,\ZZ)_{a,e}\cong 
 \left( \frac{a(RJ\cap JR)e}{a(R^2+JRJ)e}\right)_4  = \frac{ \langle p_{00}, p_{01}, p_{10}, p_{11} \rangle}{ \langle 
p_{01} + p_{10}, p_{00},p_{11} \rangle }  \cong \ZZ .
\end{equation}
Since $(b_i,c,d_i)$ is a long path, we obtain $p_{i,i}\in JRJ.$ Since $(a,b_i,c,d_{1-i})$ and $(b_i,c,d_{1-i},e),$ we obtain $p_{i,1-i}\in JR\cap RJ.$ Therefore, $a(RJ\cap JR)e = \langle p_{00},p_{01},p_{10},p_{11} \rangle.$

Since $(R^2)_4=R_2R_2,$ and all the paths have the form $(a,?,c,?,e),$ we have $a(R^2)_4e=aR_2cR_2e.$ It is easy to see that $aR_2c =\langle (a,b_0,c) - (a,b_1,c) \rangle $
and $c R_2 e = \langle (c,d_0,e) - (c,d_1,e) \rangle.$ Since $((a,b_0,c) - (a,b_1,c)) \cdot ((c,d_0,e) - (c,d_1,e)) = p_{00} - p_{01} - p_{10} + p_{11},$ we obtain $a(R^2)_4e = \langle  p_{00} - p_{01} - p_{10} + p_{11} \rangle.$ 

Using the formulas $a(JRJ)_4e = \sum_{i,j} (a,b_i)R_2 (d_j,e),$ and  $b_i R_2 d_i = \langle (b_i,c,d_i) \rangle$ and $b_i R_2 d_{1-i} =0,$ we obtain $a(JRJ)_4e =\langle p_{00},p_{11} \rangle.$
\end{example}

\section{The second magnitude homology}

In this section, we use the Gruenberg formula to show that, for any finite digraph $G$, the second magnitude homology $\MH_{2,\ell}(G,\KK)$ is a free $\KK$-module with an explicit basis (Theorem~\ref{theorem:second}). Furthermore, we use this basis to obtain, for a given $\ell$, a description of digraphs $G$ such that $\MH_{2,k}(G) = 0$ for $k > \ell$ (Theorem~\ref{th:vanishing}). Using this description, we show that if $\MH_{2,k}(G) = 0$ for $k > \ell$, then the $\ell$-fundamental category of $G$ is thin (Proposition~\ref{prop:tau-thin}). As a corollary, we obtain that for a diagonal undirected graph $G$, the associated 2-dimensional CW-complex ${\sf CW}^2(X)$ is simply connected (Corollary~\ref{cor:cw}). Finally, we show that our methods provide an alternative proof of the Asao-Hiraoka-Kanazawa theorem concerning the girth of a diagonal digraph (Proposition~\ref{prop:girth}).

\subsection{Basis of the second magnitude homology} Note that for a digraph $G$ we have $R\subseteq J^2.$ Therefore, Theorem \ref{theorem:Gruenberg_for_magnitude} implies that there is an isomorphism over an arbitrary commutative ring $\KK$ 
\begin{equation}\label{eq:iso:Grumberg_2}
\MH_{2,\ell}(G)\cong \left(\frac{R}{JR + RJ}\right)_\ell. 
\end{equation}
In this subsection we use this isomorphism to show that $\MH_{2,\ell}(G)$ is a free  $\KK$-module, and describe its basis. 

If there is a path $p=(p_0,\dots,p_n)$ we say that $p$ connects $p_0$ and $p_n.$ We say that two paths $(p_0,\dots,p_n)$ and $(q_0,\dots,q_m)$ connect the same vertices if $p_0=q_0$ and $p_n=q_m.$

\begin{definition}[Congruences and ideals for paths] Consider the category of paths in a digraph $G,$ whose objects are vertices, morphisms are paths and composition is defined by the concatenation. We say that some equivalence relation on the set of paths is a congruence if it is a congruence on the category of paths i.e. if $p\sim \tilde p,$ then the $p$ and $\tilde p$ connect the same vertices and $qpq'\sim q\tilde pq'$ whenever concatenation is defined. We say that some subset of paths $I$ is an ideal if it is an ideal in the category of paths i.e. $p\in I$ implies $qpq'\in I$ whenever the concatenation is defined. For example, the set of long paths is an ideal.
\end{definition}

\begin{definition}($\ell$-short congruence) The 
$\ell$-short congruence is the minimal congruence on the set of paths such that any two shortest paths connecting the same two vertices at distance at most $\ell$ are equivalent. We say that two paths differ by an $\ell$-short move if they can be presented as compositions $psp'$ and $p\tilde sp'$ where $s$ and $\tilde s$ are two different shortest paths of length at most $\ell$ connecting the same vertices. 
Then two paths are $\ell$-short congruent if and only if one of them can be obtained from another by a sequence of $\ell$-short moves. Note that two $\ell$-shortly congruent paths have the same length. 
\end{definition}

We denote by $\tP_\ell$ the set of all paths of length $\ell$, and by $\overline\tP_\ell$ the quotient of $\tP_\ell$ by the $(\ell-1)$-short congruence 
\begin{equation}
\overline\tP_\ell = \tP_\ell/\sim_{(\ell-1)\text{-short}}.
\end{equation}
Consider two ideals $R^{\tS},R^{\tL} \subseteq R,$ where $R^{\tS}$ is generated by the differences of shortest paths $s-\tilde s$ connecting the same vertices, and $R^{\tL}$ is generated by long paths. Then 
\begin{equation}
R=R^{\tS}+R^{\tL}.
\end{equation}

\begin{lemma} There is an isomorphism 
\begin{equation}\label{eq:iso:sh}
\left( \frac{\KK G}{JR^{\tS} + R^{\tS}J} \right)_\ell \cong \KK\cdot \overline{\tt P}_\ell
\end{equation}
induced by the canonical projection ${\tt P}_\ell\epi \overline{\tt P}_\ell.$
\end{lemma}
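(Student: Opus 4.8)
The plan is to identify $(\KK G / (JR^{\tS} + R^{\tS}J))_\ell$ with $\KK$-module freely spanned by paths of length $\ell$, modulo the submodule generated by all expressions $psp' - p\tilde sp'$ where $s, \tilde s$ are shortest paths of the same length $\leq \ell-1$ connecting the same vertices. The claim is that this quotient has $\overline{\tP}_\ell$ as a $\KK$-basis. First I would observe that $(\KK G)_\ell = \KK \cdot \tP_\ell$, so the statement reduces to showing that the submodule $(JR^{\tS} + R^{\tS}J)_\ell$ is exactly the $\KK$-span of the "$\ell$-short move relators" $q(s - \tilde s)q'$ with $qsq', q\tilde sq' \in \tP_\ell$, and that the quotient of $\KK \cdot \tP_\ell$ by this span is free on the equivalence classes.

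The key step is to describe $(JR^{\tS} + R^{\tS}J)_\ell$ explicitly. A general element of $R^{\tS}$ is a $\KK$-linear combination of elements $a(s - \tilde s)b$ where $a, b$ are paths (or idempotents) and $s - \tilde s$ is a generator; multiplying on the left by $J$ or on the right by $J$ and taking the degree-$\ell$ part, one gets that $(JR^{\tS} + R^{\tS}J)_\ell$ is spanned by elements $q(s-\tilde s)q'$ where $q, q'$ are (possibly trivial) paths, $s, \tilde s$ shortest of equal length connecting the same vertices, $qsq'$ is a path of length $\ell$, and at least one of $q, q'$ is nontrivial. The nontriviality of $q$ or $q'$ forces the length of $s$ to be at most $\ell - 1$, which is precisely the constraint defining the $(\ell-1)$-short congruence. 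So $(JR^{\tS} + R^{\tS}J)_\ell$ is the span of all $\ell$-short move relators, and the quotient $\KK \cdot \tP_\ell / (\text{relators})$ is by definition the free $\KK$-module on $\tP_\ell$ modulo the equivalence generated by $\ell$-short moves, i.e. $\KK \cdot \overline{\tP}_\ell$.

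The main obstacle I anticipate is the freeness: one must check that distinct classes in $\overline{\tP}_\ell$ remain $\KK$-linearly independent in the quotient, i.e. that the relator span does not accidentally produce a $\KK$-linear dependence among non-equivalent paths. This follows because the relators are "homogeneous" with respect to the equivalence: each relator $q(s - \tilde s)q'$ is a difference of two basis elements lying in the same $\sim_{(\ell-1)\text{-short}}$ class, so the submodule they generate is exactly $\bigoplus_{[\pi]} \Ker(\KK\cdot[\pi] \to \KK)$, a direct sum over equivalence classes, whose quotient is $\bigoplus_{[\pi]} \KK = \KK \cdot \overline{\tP}_\ell$. I would phrase this last point by noting that for any set $X$ with an equivalence relation, $\KK\cdot X$ modulo the span of $\{x - x' : x \sim x'\}$ is canonically free on $X/\!\sim$, and then apply this with $X = \tP_\ell$ and the relation generated by single $\ell$-short moves, which generates exactly $\sim_{(\ell-1)\text{-short}}$.
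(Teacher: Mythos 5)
Your argument is correct and follows essentially the same route as the paper: identify $(JR^{\tS}+R^{\tS}J)_\ell$ as the span of the relators $q(s-\tilde s)q'$ with $\mathrm{len}(q)+\mathrm{len}(q')\geq 1$, deduce $\mathrm{len}(s)\leq\ell-1$, and conclude freeness on equivalence classes; you are merely more explicit than the paper's ``The assertion follows'' about the final step that modding a free module $\KK\cdot X$ out by $\{x-x': x\sim x'\}$ yields $\KK\cdot(X/\!\sim)$. One small slip: in the last paragraph you twice write ``$\ell$-short move'' where you mean ``$(\ell-1)$-short move''; this is only a terminological inconsistency, since you correctly derived the bound $\mathrm{len}(s)\leq\ell-1$ earlier, but it should be fixed to match the definition of $\overline{\tP}_\ell$ as $\tP_\ell/\!\sim_{(\ell-1)\text{-short}}$.
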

\begin{proof}
It is easy to see that the ideal $JR^{\tS} + R^{\tS}J$ is spanned over $\mathbb K$ by the differences $psp'-p\tilde s p',$ where  $s,\tilde s$ are two different shortest paths between two vertices $x,x'$ and $p,p'$ are any paths so that at least one of them has length at least $1.$  Therefore $(JR^{\tS} + R^{\tS}J)_\ell$ is spanned by such differences $psp'-p\tilde s p'$ having ${\rm len}(p)+{\rm len}(s)+{\rm len}(p')=\ell$ and ${\rm len}(p)+{\rm len}(p')\geq 1,$ and hence,  $d(x,x')={\rm len}(s)\leq l-1.$ This proves the claim. 
\end{proof}

Denote by $\tS_\ell$ the set of shortest paths of length $\ell$ and by  $\tL_\ell$ the set of long paths of length $\ell$
\begin{equation}
\tP_\ell = \tS_\ell \sqcup \tL_\ell.
\end{equation}
Note that a shortest path cannot be $(\ell-1)$-shortly congruent to a long path. Therefore the images of these sets in $\overline\tP_\ell$ do not intersect, and if we denote them by $\overline\tS_\ell$ and $ \overline\tL_\ell,$ we obtain
\begin{equation}
\overline\tP_\ell = \overline\tS_\ell \sqcup \overline\tL_\ell.
\end{equation}
We say that a long path is minimal if all its proper subpaths are shortest. Otherwise it is called non-minimal long path. The set of non-minimal long paths of length $\ell$ is denoted by  $\tN_\ell$ 
\begin{equation}
\tN_\ell \subseteq  \tL_\ell.
\end{equation}
Its image in $\overline\tL_\ell$ is denoted by $\overline\tN_\ell$
\begin{equation}
\overline\tN_\ell \subseteq  \overline\tL_\ell.
\end{equation}
Therefore  $\overline\tN_\ell$ consists of $(\ell-1)$-short congruence classes of long paths of length $\ell$ containing a non-minimal long path. Note that a minimal long path of length $\ell$ can be $(\ell-1)$-shortly congruent to a non-minimal long path. For instance, if we take $\ell=3$ and the following graph, 
\begin{equation}
\begin{tikzcd}[row sep=0mm]
& 1 \ar[r,-] \ar[dd,-]  & 2 \ar[dd,-] \\
0 \ar[ru,-] \ar[rd,-] & && \\
& 3 \ar[r,-] & 4 
\end{tikzcd}
\end{equation}
then the minimal long path  $(0,1,2,4)$ is $2$-shortly congruent to the non-minimal long path $(0,1,3,4).$ Therefore some elements of $\overline\tN_\ell$ can be presented both by a minimal long path and by a non-minimal long path.

\begin{lemma}\label{lemma:iso_JR+RJ}
The isomorphism \eqref{eq:iso:sh} induces an isomorphism 
\begin{equation}\label{eq:lemma:iso:KG/JR+RJ}
\left( \frac{\KK G}{JR + RJ} \right)_\ell \cong \KK\cdot (\overline\tP_\ell\setminus \overline\tN_\ell).
\end{equation}
\end{lemma}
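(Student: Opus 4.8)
The plan is to deduce this from the isomorphism \eqref{eq:iso:sh} together with the decomposition $R=R^{\tS}+R^{\tL}$. First I would observe that, since $R=R^{\tS}+R^{\tL}$,
\[
JR+RJ=(JR^{\tS}+R^{\tS}J)+(JR^{\tL}+R^{\tL}J),
\]
so by the third isomorphism theorem $\KK G/(JR+RJ)$ is the quotient of $\KK G/(JR^{\tS}+R^{\tS}J)$ by the image of the ideal $JR^{\tL}+R^{\tL}J$. In view of \eqref{eq:iso:sh}, which identifies $(\KK G/(JR^{\tS}+R^{\tS}J))_\ell$ with $\KK\cdot\overline\tP_\ell$ via the projection $\tP_\ell\epi\overline\tP_\ell$, it suffices to show that the image of $(JR^{\tL}+R^{\tL}J)_\ell$ under this projection is precisely $\KK\cdot\overline\tN_\ell$. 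Granting that, the quotient is $(\KK\cdot\overline\tP_\ell)/(\KK\cdot\overline\tN_\ell)$, which is free on $\overline\tP_\ell\setminus\overline\tN_\ell$ since $\overline\tN_\ell$ is a subset of the basis $\overline\tP_\ell$.

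Next I would identify the relevant $\KK$-bases. As $R^{\tL}$ is the ideal generated by the long paths, a concatenation $q\,r\,q'$ of a long path $r$ with two paths, when it does not vanish, is a path containing a long (consecutive) subpath, hence is itself long (by the triangle inequality); conversely, every long path already lies in $R^{\tL}$. So $R^{\tL}$ is the free $\KK$-submodule of $\KK G$ on the set of long paths. Applying the same bookkeeping once more, $JR^{\tL}$ is spanned by the nonzero products $q\,p$ with $\mathrm{len}(q)\ge 1$ and $p$ a long path (the witnessing long subpath of $q\,p$ then sits at a position $\ge 1$), and symmetrically for $R^{\tL}J$ the witnessing subpath ends before the last vertex. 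Conversely, if a path $w$ of length $\ell$ has a long proper subpath $r$, then splitting off the nonempty prefix or the nonempty suffix of $w$ that does not meet $r$ exhibits $w$ in $JR^{\tL}$ or in $R^{\tL}J$. Hence a path of length $\ell$ lies in $JR^{\tL}+R^{\tL}J$ if and only if it has a proper long subpath, i.e. is a non-minimal long path, so $(JR^{\tL}+R^{\tL}J)_\ell=\KK\cdot\tN_\ell$.

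Finally, the composite of the projection $\KK G\to\KK G/(JR^{\tS}+R^{\tS}J)$ with the identification \eqref{eq:iso:sh} is, in degree $\ell$, the $\KK$-linear map sending a path $p$ to its $(\ell-1)$-short congruence class $\overline p$; it therefore carries $(JR^{\tL}+R^{\tL}J)_\ell=\KK\cdot\tN_\ell$ onto $\KK\cdot\overline\tN_\ell$, since $\overline\tN_\ell$ is by definition the image of $\tN_\ell$. Combining this with the first step gives $(\KK G/(JR+RJ))_\ell\cong(\KK\cdot\overline\tP_\ell)/(\KK\cdot\overline\tN_\ell)\cong\KK\cdot(\overline\tP_\ell\setminus\overline\tN_\ell)$, which is the claim.

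I expect the only delicate point to be the identification $(JR^{\tL}+R^{\tL}J)_\ell=\KK\cdot\tN_\ell$: one must verify both inclusions, namely that every non-minimal long path arises by splitting off a nonempty prefix or suffix so that the remaining factor still contains the witnessing long subpath, and that no spurious elements appear — the latter being automatic because the products in question are distinct basis vectors of $\KK G$. Everything else is the standard quotient manipulation and the observation that a quotient of a free $\KK$-module by the span of a sub-basis is again free on the complementary subset.
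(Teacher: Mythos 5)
Your proof is correct and follows essentially the same route as the paper: pass from $\KK G/(JR^{\tS}+R^{\tS}J)$ to its quotient by the image of $JR^{\tL}+R^{\tL}J$, identify $(JR^{\tL}+R^{\tL}J)_\ell$ with the span of non-minimal long paths $\KK\cdot\tN_\ell$, and observe that its image under \eqref{eq:iso:sh} is $\KK\cdot\overline\tN_\ell$. The paper's version is terse; you have merely filled in the details, in particular the two inclusions showing $(JR^{\tL}+R^{\tL}J)_\ell=\KK\cdot\tN_\ell$.
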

\begin{proof}
The ring $\KK G/(JR + RJ)$ is isomorphic to the quotient of  $\KK G/(JR^{\tS} + R^{\tS}J)$ by the image of $JR^{\tL} + R^{\tL}J.$ The ideal $JR^{\tL} + R^{\tL}J$ is spanned over $\KK$ by non-minimal long paths. This implies the assertion. 
\end{proof}

\begin{theorem}\label{theorem:second} Let $G$ be a finite digraph, $\KK$ be a commutative ring and $\ell\geq 0$ be an integer. Assume that for any vertices $x,y$ at a distance $d(x,y)=\ell$ we have a chosen shortest path $s_{x,y}$ from $x$ to $y.$
Then the $\KK$-module 
\begin{equation}
\MH_{2,\ell}(G)\cong \left(\frac{R}{JR+RJ}\right)_\ell   
\end{equation}
is isomorphic to a free submodule of $\KK\cdot (\overline\tP_\ell\setminus \overline\tN_\ell)$ freely generated by elements of the form:
\begin{enumerate}
    \item $[s]-[s_{x,y}],$ where $s$ is a shortest path from $x$ to $y$ such that $[s]\neq [s_{x,y}];$  
    \item $[p],$ where $[p]\in  \overline\tL_\ell\setminus \overline\tN_\ell;$
\end{enumerate}
and the isomorphism is induced by \eqref{eq:lemma:iso:KG/JR+RJ}.
\end{theorem}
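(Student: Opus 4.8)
The plan is to transport the whole computation into the free $\KK$-module $\KK\cdot(\overline\tP_\ell\setminus\overline\tN_\ell)$ along the isomorphism \eqref{eq:lemma:iso:KG/JR+RJ} of Lemma~\ref{lemma:iso_JR+RJ}, and to identify the image of $(R)_\ell$ there. Since $JR+RJ\subseteq R$, the graded piece $\bigl(R/(JR+RJ)\bigr)_\ell$ is precisely the image of $(R)_\ell$ under $\KK G\epi\KK G/(JR+RJ)$, so by \eqref{eq:iso:Grumberg_2} it suffices to describe this image. Under the induced map $\tP_\ell\to\KK\cdot(\overline\tP_\ell\setminus\overline\tN_\ell)$, a path $p$ of length $\ell$ is sent to its class $[p]$ if $[p]\notin\overline\tN_\ell$ and to $0$ otherwise; moreover $\overline\tP_\ell\setminus\overline\tN_\ell=\overline\tS_\ell\sqcup(\overline\tL_\ell\setminus\overline\tN_\ell)$, so the target splits as $\KK\cdot\overline\tS_\ell\oplus\KK\cdot(\overline\tL_\ell\setminus\overline\tN_\ell)$, and a shortest path of length $\ell$ necessarily lands in the first summand because it cannot be $(\ell-1)$-shortly congruent to a long path.

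First I would compute the image of $(R)_\ell$ from the decomposition $R=R^\tS+R^\tL$. The submodule $(R^\tL)_\ell$ is spanned by long paths of length $\ell$; each maps either to an element of $\overline\tL_\ell\setminus\overline\tN_\ell$ or to $0$, and every class in $\overline\tL_\ell\setminus\overline\tN_\ell$ is attained, so the image of $(R^\tL)_\ell$ is exactly the $\KK$-span of the type-(2) elements, namely the whole second summand. The submodule $(R^\tS)_\ell$ is spanned by the elements $q(s-\tilde s)q'$ with $s,\tilde s$ shortest paths connecting the same pair of vertices and ${\rm len}(q)+{\rm len}(s)+{\rm len}(q')=\ell$. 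If ${\rm len}(q)+{\rm len}(q')\ge 1$ then $q(s-\tilde s)q'\in JR^\tS+R^\tS J\subseteq JR+RJ$ and its image vanishes; if $q$ and $q'$ are trivial, then $s,\tilde s$ are shortest paths of length $\ell$ between some $x,y$ with $d(x,y)=\ell$, and the image is $[s]-[\tilde s]=([s]-[s_{x,y}])-([\tilde s]-[s_{x,y}])$, a combination of type-(1) elements. Thus the image of $(R)_\ell$ is spanned by the elements of types (1) and (2); all of these lie in $\bigl(R/(JR+RJ)\bigr)_\ell$, being the images of $s-s_{x,y}\in R^\tS\subseteq R$ and of $p\in R^\tL\subseteq R$. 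This proves the spanning statement.

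It remains to prove $\KK$-linear independence, which is the only substantive step. Relative to the direct sum $\KK\cdot\overline\tS_\ell\oplus\KK\cdot(\overline\tL_\ell\setminus\overline\tN_\ell)$, any $\KK$-linear relation among all the generators separates into a relation among the type-(2) elements, which are pairwise distinct basis vectors of the second summand and hence have vanishing coefficients, and a relation among the type-(1) elements inside $\KK\cdot\overline\tS_\ell$. For the latter I would use that $(\ell-1)$-shortly congruent paths connect the same vertices, so that $\overline\tS_\ell=\bigsqcup_{d(x,y)=\ell}\overline\tS_\ell^{(x,y)}$, where $\overline\tS_\ell^{(x,y)}$ is the set of $(\ell-1)$-short classes of shortest $x$-$y$-paths and $[s_{x,y}]$ is a distinguished element of $\overline\tS_\ell^{(x,y)}$. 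In a free module with basis $B$ and a chosen $b_0\in B$, the family $\{\,b-b_0:b\in B\setminus\{b_0\}\,\}$ extends to a basis (the transition matrix to $\{b_0\}\cup\{b-b_0:b\neq b_0\}$ is unipotent, hence invertible over any $\KK$), so it is $\KK$-linearly independent; applying this inside each block $\overline\tS_\ell^{(x,y)}$ and summing over blocks gives independence of the type-(1) generators. Consequently $\bigl(R/(JR+RJ)\bigr)_\ell$, and with it $\MH_{2,\ell}(G)$, is a free $\KK$-module on the described basis, the required isomorphism being the one induced by \eqref{eq:lemma:iso:KG/JR+RJ}.

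The main obstacle I anticipate is the bookkeeping in the middle step: one must verify that, modulo $JR+RJ$, the ideal $R^\tS$ contributes only the differences $s-\tilde s$ of shortest paths of length exactly $\ell$ (every generator with a nontrivial prefix or suffix forces ${\rm len}(s)\le \ell-1$ and is absorbed into $JR^\tS+R^\tS J$), and that these land in the $\overline\tS_\ell$-summand, disjoint from the span of the type-(2) elements. This clean separation of the two families into different direct summands is exactly what yields independence, and it rests on the precise form of Lemma~\ref{lemma:iso_JR+RJ}, that exactly the classes in $\overline\tN_\ell$ are killed.
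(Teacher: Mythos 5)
Your proof is correct and follows essentially the same route as the paper's: identify $\MH_{2,\ell}(G)$ with the image of $R_\ell$ in $\KK\cdot(\overline\tP_\ell\setminus\overline\tN_\ell)$ via Lemma~\ref{lemma:iso_JR+RJ}, split $R=R^{\tS}+R^{\tL}$, observe the images land in the disjoint summands $\KK\cdot\overline\tS_\ell$ and $\KK\cdot(\overline\tL_\ell\setminus\overline\tN_\ell)$, and assemble the basis. The only difference is that you spell out the linear independence of the type-(1) generators (block decomposition by endpoints plus the unipotent change of basis), which the paper leaves as ``easy to see''.
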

\begin{proof}
Lemma \ref{lemma:iso_JR+RJ} and the isomorphism \eqref{eq:iso:Grumberg_2} imply that $\MH_{2,\ell}(G)$ is isomorphic to the image of $R_\ell$ in the module $\KK\cdot (\overline\tP_\ell\setminus \overline\tN_\ell).$ Since  $R=R^{\tS}+R^{\tL},$ this image is equal to the sum of the image of $R^{\tS}_\ell$ and  the image of $R^{\tL}_\ell.$

Let us first describe the image of $R^{\tS}_\ell.$ The module $R^{\tS}_\ell$ is spanned by the differences of the form $psp'-p\tilde sp',$ where $s$ and $\tilde s$ are two different shortest paths connecting the same vertices and ${\rm len}(p)+{\rm len}(s)+{\rm len}(p')=\ell$. If ${\rm len}(p)+{\rm len}(p')\geq 1,$ then the image of $psp'-p\tilde sp'$ in $\KK\cdot (\overline\tP_\ell\setminus \overline\tN_\ell)$ is trivial, because $psp'$ is $(\ell-1)$-shortly congruent to $p\tilde s p'.$ Then the image of $R^{\tS}_\ell$ is spanned by the differences $[s]-[\tilde s],$ where $s$ and $\tilde s$ are two different shortest paths between some vertices at a distance $\ell.$ Hence, it is easy to see that elements of the form $[s]-[s_{x,y}]$ form a basis of the image of $R^{\tS}$. In particular, we obtain that the image of $R^{\tS}$ lies in $\KK\cdot Q^{\sf sh}_\ell.$

The module $R^{\tL}_\ell$ is spanned by  long paths. Therefore, its image in  $\KK\cdot (\overline\tP_\ell\setminus \overline\tN_\ell)$ is the free module $\KK\cdot (\overline\tL_\ell\setminus \overline\tN_\ell).$ Since $\overline\tP_\ell\setminus \overline\tN_\ell =\overline\tS_\ell \sqcup (\overline\tL_\ell\setminus \overline\tN_\ell),$ and the image of $R^{\tS}_\ell$ lies in $\overline\tS_\ell,$ the images of $R^{\tS}_\ell$ and $R^{\tL}_\ell$ intersect trivially. Therefore, the image of $R$ is the direct sum of images of $R^{\tS}$ and $R^{\tL},$ and its basis is the union of bases of the images.  
\end{proof}

For two vertices $x,y$ we denote by $\overline\tP_\ell(x,y),\overline\tS_\ell(x,y), \overline\tL_\ell(x,y), \overline\tN_\ell(x,y)$ the subsets of $\overline\tP_\ell,\overline\tS_\ell, \overline\tL_\ell, \overline\tN_\ell$ consisting of the equivalence classes of paths connecting $x$ and $y.$

\begin{corollary}\label{cor:ranks}
Let $x,y$ be two vertices of a digraph $G$ and $\ell\geq 0$ be an integer. Then $\MH_{2,\ell}(G)_{x,y}$ is a free $\KK$-module, whose rank is described by the formula 
\begin{equation}
{\rm rk}_\KK (\MH_{2,\ell}(G)_{x,y}) =\begin{cases}
|\overline\tL_\ell(x,y)\setminus \overline\tN_\ell(x,y)|, & d(x,y)<\ell,\\
|\overline\tS_\ell(x,y)|-1, & d(x,y)=\ell,\\
0, & d(x,y) > \ell.
\end{cases}
\end{equation}
\end{corollary}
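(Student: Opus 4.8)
The plan is to read this off from Theorem~\ref{theorem:second} by passing to the directed components and then counting basis elements case by case. First I would record the directed refinement of Theorem~\ref{theorem:second}. The free module $\KK\cdot(\overline\tP_\ell\setminus\overline\tN_\ell)$ decomposes as $\bigoplus_{x,y}\KK\cdot(\overline\tP_\ell(x,y)\setminus\overline\tN_\ell(x,y))$, since every path has a well-defined pair of endpoints and the $(\ell-1)$-short congruence preserves endpoints. Under the isomorphism \eqref{eq:lemma:iso:KG/JR+RJ} this splitting corresponds to the decomposition \eqref{eq:a/b} of $(\KK G/(JR+RJ))_\ell$ into its $e_x(-)e_y$-pieces, which, via \eqref{eq:Gruenberg_even_directed} taken with $n=1$ (using $R\subseteq J^2$ so that $R\cap JR^{0}J=R$) together with the compatibility of \eqref{eq:Gruenberg_even_directed} with \eqref{eq:gruenberg_magnitude} asserted in Theorem~\ref{theorem:Gruenberg_for_magnitude}, is precisely the decomposition $\MH_{2,\ell}(G)=\bigoplus_{x,y}\MH_{2,\ell}(G)_{x,y}$. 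Consequently, a generator of type~(1) in Theorem~\ref{theorem:second} lies in the $(x,y)$-summand determined by the endpoints of the shortest path $s$, and a generator of type~(2) lies in the summand determined by the endpoints of $p$. Thus $\MH_{2,\ell}(G)_{x,y}$ is a free $\KK$-module with basis consisting of the type-(1) generators $[s]-[s_{x,y}]$, where $s$ runs over shortest paths from $x$ to $y$ with $[s]\neq[s_{x,y}]$, together with the type-(2) generators $[p]$ with $[p]\in\overline\tL_\ell(x,y)\setminus\overline\tN_\ell(x,y)$.

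Next I would split into the three cases according to the sign of $\ell-d(x,y)$. If $d(x,y)>\ell$, then every path from $x$ to $y$ has length at least $d(x,y)>\ell$, so $\overline\tP_\ell(x,y)=\varnothing$; neither type of generator occurs and the rank is $0$. If $d(x,y)=\ell$, then a long path from $x$ to $y$ would have length strictly greater than $d(x,y)=\ell$, so $\overline\tL_\ell(x,y)=\varnothing$ and only type-(1) generators occur; these are indexed by the classes of $\overline\tS_\ell(x,y)$ distinct from $[s_{x,y}]$, and since $d(x,y)=\ell$ forces $\overline\tS_\ell(x,y)\neq\varnothing$, there are exactly $|\overline\tS_\ell(x,y)|-1$ of them. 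If $d(x,y)<\ell$, then a shortest path from $x$ to $y$ has length $d(x,y)<\ell$, so $\overline\tS_\ell(x,y)=\varnothing$ and only type-(2) generators occur, giving rank $|\overline\tL_\ell(x,y)\setminus\overline\tN_\ell(x,y)|$. Collecting the three cases yields the stated formula.

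This is essentially a bookkeeping argument resting on Theorem~\ref{theorem:second}, so I do not expect a genuine obstacle. The only point requiring care is the first paragraph: one must verify that the explicit basis of $\MH_{2,\ell}(G)$ produced in Theorem~\ref{theorem:second} is compatible with the directed decomposition, so that the $(x,y)$-component is obtained simply by keeping those generators whose underlying path runs from $x$ to $y$. This is exactly what the naturalness of the Gruenberg formula (Remark~\ref{remark:Naturalness_of_Gruenberg_formulas}) and the compatibility statements in Theorem~\ref{theorem:Gruenberg_for_magnitude} provide, and it also matches the observation in Theorem~\ref{theorem:second} that the image of $R^{\tS}$ lands in the span of the $\overline\tS_\ell$-classes while the image of $R^{\tL}$ lands in the span of the $\overline\tL_\ell\setminus\overline\tN_\ell$-classes.
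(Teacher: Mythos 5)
Your proposal is correct and follows the route the paper intends: Corollary~\ref{cor:ranks} is stated without proof as an immediate consequence of Theorem~\ref{theorem:second}, and your argument is precisely the expected one — observe that the explicit basis of $\MH_{2,\ell}(G)$ from Theorem~\ref{theorem:second} is compatible with the directed decomposition (via \eqref{eq:Gruenberg_even_directed} with $n=1$ and $R\subseteq J^2$), so that the $(x,y)$-summand is spanned by the basis elements whose endpoints are $(x,y)$, and then count in each of the three regimes for $d(x,y)$ versus $\ell$. Your careful justification of the compatibility step is a sound filling-in of what the paper leaves implicit.
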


\begin{corollary}
If $\KK=\ZZ,$ the abelian group $\MH_{2,\ell}(G,\ZZ)$ is torsion free for any finite digraph $G.$
\end{corollary}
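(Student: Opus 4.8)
The plan is to read the result off directly from Theorem~\ref{theorem:second}. Specializing that theorem to $\KK=\ZZ$, it provides an explicit isomorphism between $\MH_{2,\ell}(G,\ZZ)$ and a submodule of the free abelian group $\ZZ\cdot(\overline\tP_\ell\setminus\overline\tN_\ell)$, and it moreover exhibits a \emph{basis} of that submodule: the differences $[s]-[s_{x,y}]$ of shortest paths connecting the same vertices at distance $\ell$, together with the classes $[p]$ with $[p]\in\overline\tL_\ell\setminus\overline\tN_\ell$. Consequently $\MH_{2,\ell}(G,\ZZ)$ is itself a free $\ZZ$-module, and since every free abelian group is torsion free, the corollary follows. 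First I would simply invoke Theorem~\ref{theorem:second} with $\KK=\ZZ$; no further computation is required.

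I do not anticipate any obstacle. The substantive content — that the listed elements really do form a $\KK$-basis — is part of the statement and proof of Theorem~\ref{theorem:second}, where it was established via Lemma~\ref{lemma:iso_JR+RJ} and the Gruenberg isomorphism \eqref{eq:iso:Grumberg_2}, uniformly over an arbitrary commutative ring $\KK$. In particular the same reasoning shows that $\MH_{2,\ell}(G,\KK)$ is free over every $\KK$, so that torsion-freeness over $\ZZ$ is nothing more than the case $\KK=\ZZ$. (Alternatively, one could appeal to Asao's description of $\MH_{2,\ell}(G)$ as the zeroth homology of a pair of simplicial complexes \cite{asao2021geometric}, which likewise yields freeness, but the route through Theorem~\ref{theorem:second} is the most economical here.)
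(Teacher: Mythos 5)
Your argument is correct and is essentially the same one the paper intends: the corollary is an immediate consequence of Theorem~\ref{theorem:second} (or equivalently Corollary~\ref{cor:ranks}), which exhibits $\MH_{2,\ell}(G,\ZZ)$ as a free abelian group with an explicit basis, and free abelian groups are torsion-free.
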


The following corollary can be easily proved directly from the definition of the magnitude homology, but it also easily follows from our description. 

\begin{corollary}
The rank of the free $\KK$-module $\MH_{2,2}(G)$ is equal to the number of paths of length $2$ minus the number of pairs of vertices at distance $2.$
\end{corollary}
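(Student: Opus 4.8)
The plan is to sum the component-wise ranks supplied by Corollary~\ref{cor:ranks} over all ordered pairs $(x,y)$ of vertices, after recording the two simplifications that occur when $\ell = 2$ (freeness of $\MH_{2,2}(G)$ itself being already guaranteed by Theorem~\ref{theorem:second}, since a direct sum of free modules is free).

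First I would note that the short congruence appearing in Corollary~\ref{cor:ranks} for $\ell = 2$ is the $(\ell-1) = 1$-short congruence, which identifies shortest paths of length at most $1$ connecting the same vertices. Since $G$ is a digraph — there is at most one arrow between any ordered pair of vertices — the only shortest paths of length $\le 1$ are the length-zero paths $(x)$ and the single arrows, each unique for fixed endpoints. Hence the $1$-short congruence is trivial and $\overline{\tP}_2 = \tP_2$, $\overline{\tS}_2 = \tS_2$, $\overline{\tL}_2 = \tL_2$, $\overline{\tN}_2 = \tN_2$. Second, a long path of length $2$, say $(x_0,x_1,x_2)$, has all of its proper subpaths of length $\le 1$, and every path of length $\le 1$ is automatically shortest; therefore there are no non-minimal long paths of length $2$, i.e.\ $\tN_2 = \emptyset$.

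Feeding these facts into Corollary~\ref{cor:ranks} gives, for each ordered pair $(x,y)$,
\begin{equation*}
{\rm rk}_\KK\bigl(\MH_{2,2}(G)_{x,y}\bigr) =
\begin{cases}
|\tL_2(x,y)|, & d(x,y) < 2,\\
|\tS_2(x,y)| - 1, & d(x,y) = 2,\\
0, & d(x,y) > 2,
\end{cases}
\end{equation*}
where $\tP_2(x,y) = \tS_2(x,y) \sqcup \tL_2(x,y)$ denotes the set of paths of length $2$ from $x$ to $y$. The next step is the elementary observation that a path of length $2$ from $x$ to $y$ is long exactly when $d(x,y) < 2$, is shortest exactly when $d(x,y) = 2$, and does not exist when $d(x,y) > 2$ (in particular when $d(x,y) = \infty$); thus $\tP_2$ is the disjoint union of the sets $\tL_2(x,y)$ over pairs with $d(x,y) < 2$ and the sets $\tS_2(x,y)$ over pairs with $d(x,y) = 2$. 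Using the direct sum decomposition $\MH_{2,2}(G) \cong \bigoplus_{x,y}\MH_{2,2}(G)_{x,y}$ and the fact that $|\tS_2(x,y)| \ge 1$ whenever $d(x,y) = 2$ (the distance is realized by an actual path), one obtains
\begin{equation*}
{\rm rk}_\KK\,\MH_{2,2}(G) = \sum_{d(x,y) < 2}|\tL_2(x,y)| + \sum_{d(x,y) = 2}\bigl(|\tS_2(x,y)| - 1\bigr) = |\tP_2| - \bigl|\{(x,y) : d(x,y) = 2\}\bigr|,
\end{equation*}
which is the claimed equality.

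The argument is essentially bookkeeping, so I do not expect a genuine obstacle; the only points requiring care are the two structural observations at $\ell = 2$ — that the $1$-short congruence is trivial and that $\tN_2 = \emptyset$ — together with the harmless check that the component vanishes when $d(x,y) = \infty$, so that no spurious terms enter the sum.
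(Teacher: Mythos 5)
Your proof is correct, and it follows the second of the two routes the paper alludes to when it remarks that the corollary ``easily follows from our description'' (i.e.\ from Corollary~\ref{cor:ranks}). The bookkeeping is done properly: the $1$-short congruence is indeed trivial because a digraph has at most one arrow between any ordered pair of vertices, $\tN_2 = \emptyset$ because all proper subpaths of a length-$2$ path have length $\le 1$ and are therefore shortest, and your partition of $\tP_2$ according to whether $d(x,y) < 2$ or $= 2$ makes the sum telescope to $|\tP_2| - |\{(x,y): d(x,y)=2\}|$.

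For comparison, the paper also flags that the statement ``can be easily proved directly from the definition,'' meaning the following shorter argument: since $\MC_{0,2}(G) = 0 = \MC_{3,2}(G)$, the complex $\MC_{\bullet,2}(G)$ reduces to $0 \to \MC_{2,2}(G) \xrightarrow{\partial_2} \MC_{1,2}(G) \to 0$, where $\MC_{2,2}(G)$ is free on paths of length $2$ and $\MC_{1,2}(G)$ is free on pairs at distance $2$. The map $\partial_2$ sends a shortest path $(x,v,y)$ to $-(x,y)$ and annihilates long paths, so it is surjective onto $\MC_{1,2}(G)$, giving ${\rm rk}\,\MH_{2,2}(G) = {\rm rk}\,\ker\partial_2 = |\tP_2| - |\{(x,y): d(x,y)=2\}|$ without invoking the machinery of Theorem~\ref{theorem:second}. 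Your route is a bit heavier but serves as a useful consistency check on Corollary~\ref{cor:ranks}.
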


\subsection{Vanishing of the second magnitude homology}

\begin{proposition}\label{prop:vanishing_fixed_l}
Let $G$ be a finite digraph and $\ell\geq 2$ be an integer. Then  $\MH_{2,\ell}(G)=0$ if and only if the following conditions hold
\begin{enumerate}
    \item any two shortest paths of length $\ell$ connecting the same vertices are $(\ell-1)$-shortly congruent;
    \item any minimal long path of length $\ell$ is $(\ell-1)$-shortly congruent to a non-minimal long path.
\end{enumerate}
\end{proposition}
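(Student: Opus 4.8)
The plan is to deduce this directly from the explicit basis of $\MH_{2,\ell}(G)$ furnished by Theorem~\ref{theorem:second}. That theorem presents $\MH_{2,\ell}(G)$ as a free $\KK$-module whose basis is the union of two families: the differences $[s]-[s_{x,y}]$ with $s$ a shortest path of length $\ell$ and $[s]\neq [s_{x,y}]$, and the classes $[p]$ with $[p]\in \overline\tL_\ell\setminus \overline\tN_\ell$. Since a free module vanishes exactly when its basis is empty, $\MH_{2,\ell}(G)=0$ if and only if both families are empty, so the whole argument reduces to identifying ``the first family is empty'' with condition~(1) and ``the second family is empty'' with condition~(2).

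For the first family I would argue as follows. A basis element of the first type is attached to each $(\ell-1)$-short congruence class of shortest paths from $x$ to $y$ other than $[s_{x,y}]$, and these exist only for pairs with $d(x,y)=\ell$ (pairs at distance $<\ell$ admit no shortest path of length $\ell$, hence impose no condition). Thus the first family is empty precisely when, for every pair $x,y$ with $d(x,y)=\ell$, all shortest paths of length $\ell$ from $x$ to $y$ lie in a single $(\ell-1)$-short congruence class, which is exactly condition~(1).

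For the second family, emptiness means $\overline\tL_\ell=\overline\tN_\ell$, i.e.\ every $(\ell-1)$-short congruence class of long paths of length $\ell$ already contains a non-minimal long path. Here I use that a shortest path is never $(\ell-1)$-shortly congruent to a long path, so every class in $\overline\tL_\ell$ has only long representatives; a class with a non-minimal long representative lies in $\overline\tN_\ell$ automatically, so the only classes that could lie outside $\overline\tN_\ell$ are those all of whose long representatives are minimal. Hence $\overline\tL_\ell=\overline\tN_\ell$ holds if and only if every minimal long path of length $\ell$ is $(\ell-1)$-shortly congruent to a non-minimal long path, which is condition~(2). Combining the two equivalences yields the Proposition. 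There is no genuine obstacle here — the substance is already contained in Theorem~\ref{theorem:second}, and the only care needed is the bookkeeping of $(\ell-1)$-short congruence classes, in particular the observation that membership of a long-path class in $\overline\tN_\ell$ is witnessed by a single non-minimal representative, which is what lets the quotient description of Theorem~\ref{theorem:second} translate into the per-path phrasing of~(1) and~(2).
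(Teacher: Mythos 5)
Your argument is correct and is essentially the paper's own: the paper deduces Proposition~\ref{prop:vanishing_fixed_l} from Corollary~\ref{cor:ranks}, which is itself just the $(x,y)$-component restatement of the basis description in Theorem~\ref{theorem:second}, so going directly from Theorem~\ref{theorem:second} as you do is the same route. The bookkeeping you spell out — that the first family lives only over pairs at distance exactly $\ell$, and that a long-path class lies in $\overline\tN_\ell$ as soon as it has a single non-minimal representative — is exactly what makes Corollary~\ref{cor:ranks} and hence the proposition follow.
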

\begin{proof}
It follows from Corollary \ref{cor:ranks}.  
\end{proof}

A long path is called \emph{$\ell$-reducible} if it contains a long subpath of length at most $\ell.$ Then a non-minimal long path of length $\ell$ is an $(\ell-1)$-reducible path of length $\ell.$ All long paths of length at most $\ell$ are $\ell$-reducible. A long path is called \emph{$\ell$-quasi-reducible} if it is $\ell$-shortly congruent to an $\ell$-reducible path. Note that the set of all $\ell$-reducible paths is an ideal, and the set of all $\ell$-quasi-reducible paths is an ideal. 

We say that a digraph $G$ satisfies property $(\mathcal{V}_\ell)$ if 
 
 \medskip

 \begin{itemize}
     \item[$(\mathcal{V}_\ell)$] \it any two shortest paths connecting the same vertices are $\ell$-shortly congruent; and any long path is $\ell$-quasi-reducible.
 \end{itemize}

 \medskip

\begin{theorem}\label{th:vanishing} Let $G$ be a finite digraph and $\ell\geq 1$ be an integer. Then $\MH_{2,k}(G)=0$ for any $k>\ell$ if and only if $G$ satisfies property $(\mathcal{V}_\ell).$ 
\end{theorem}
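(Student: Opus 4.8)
The plan is to reduce the statement to the fixed-degree criterion already available in Proposition~\ref{prop:vanishing_fixed_l}. That proposition characterizes $\MH_{2,\ell}(G)=0$ for a \emph{single} $\ell$ by two conditions: (1) any two shortest paths of length $\ell$ between the same vertices are $(\ell-1)$-shortly congruent; and (2) any minimal long path of length $\ell$ is $(\ell-1)$-shortly congruent to a non-minimal long path. So $\MH_{2,k}(G)=0$ for all $k>\ell$ is equivalent to: for every $k>\ell$, conditions (1) and (2) hold at level $k$. I would prove the equivalence of this infinite family of conditions with $(\mathcal V_\ell)$ by showing each direction, translating the ``$(k-1)$-short'' statements upward and downward using the fact (stated in the excerpt) that $\ell$-short congruence is the minimal congruence identifying shortest paths at distance $\le \ell$, so that $\ell$-short congruence refines $(\ell+1)$-short congruence refines $\cdots$; and using that the set of $\ell$-reducible paths and the set of $\ell$-quasi-reducible paths are ideals in the category of paths.

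For the direction $(\mathcal V_\ell)\Rightarrow \MH_{2,k}(G)=0$ for $k>\ell$: fix $k>\ell$. For condition (1) at level $k$, take two shortest paths $s,\tilde s$ of length $k$ connecting $x,y$. By $(\mathcal V_\ell)$ they are $\ell$-shortly congruent, hence connected by a sequence of $\ell$-short moves; each such move replaces a shortest subpath of length $\le \ell < k$ by another, so it occurs strictly inside the path (the complementary part has length $\ge 1$), and therefore is in particular a $(k-1)$-short move. Thus $s\sim_{(k-1)\text{-short}}\tilde s$. For condition (2) at level $k$, take a minimal long path $p$ of length $k$. By $(\mathcal V_\ell)$, $p$ is $\ell$-quasi-reducible, i.e.\ $\ell$-shortly congruent to a path $p'$ containing a long subpath of length $\le \ell$. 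The $\ell$-short moves carrying $p$ to $p'$ all have length $\le \ell < k$, hence are $(k-1)$-short moves, so $p\sim_{(k-1)\text{-short}}p'$; and $p'$, having a long subpath of length $\le \ell<k$, is $(k-1)$-reducible, i.e.\ a non-minimal long path of length $k$. (Here one should check, or cite, that the $\ell$-short moves do not disturb the long subpath of $p'$ — more carefully, one runs the sequence from $p'$ toward $p$ and tracks reducibility; since $\ell$-reducibility is an ideal and each move is localized, the intermediate terms stay $(k-1)$-reducible as long as the long subpath survives, and one can always choose the witnessing $p'$ so that this is the case.) This is the step I expect to require the most care.

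For the converse, assume $\MH_{2,k}(G)=0$ for all $k>\ell$, equivalently conditions (1) and (2) hold at every level $k>\ell$. I must deduce the two clauses of $(\mathcal V_\ell)$: that all shortest paths between the same vertices are $\ell$-shortly congruent, and that every long path is $\ell$-quasi-reducible. For the first clause: if $x,y$ have distance $\le \ell$, shortest paths between them have length $\le\ell$ and are $\ell$-shortly congruent by definition; if the distance is some $k>\ell$, condition (1) at level $k$ gives $(k-1)$-short congruence, but since $s,\tilde s$ are themselves the whole path of length $k$, any $(k-1)$-short move on them is just replacing the entire path by another shortest path of the same length — wait, this needs the moves to have length $\le k-1$ yet connect $x,y$ at distance $k$, impossible unless they are internal, so actually the $(k-1)$-short moves between two length-$k$ shortest paths from $x$ to $y$ are automatically $\ell$-short moves followed by nothing... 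I would instead argue directly: condition (1) at level $k$ says $s\sim_{(k-1)\text{-short}}\tilde s$, and I claim any $(k-1)$-short move between shortest paths of length $k$ is an $\ell$-short move, because the replaced subpath is shortest of length $\le k-1$, but this is not yet $\le\ell$ — so one must iterate, using condition (1) at the intermediate levels $\le k-1$ down to $\ell$ to progressively shorten the replaced blocks, an induction on $k$. For the second clause: given a long path $p$ of length $k$, if $k\le\ell$ then $p$ is $\ell$-reducible (it is a long subpath of itself of length $\le\ell$), hence $\ell$-quasi-reducible; if $k>\ell$, write $p$ as a concatenation and use that $p$ contains a minimal long subpath; by condition (2) applied at the appropriate level (and the ideal property of quasi-reducibility, plus the induction hypothesis on shorter long paths) one pushes the obstruction down to length $\le\ell$. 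The main obstacle throughout is the bookkeeping in this downward induction: showing that the ``$(k-1)$-short'' identifications provided by the hypotheses can be assembled and localized into genuine ``$\ell$-short'' identifications, and that the witnessing reducible/non-minimal paths can be chosen coherently so the $\ell$-short moves preserve the relevant long subpath.
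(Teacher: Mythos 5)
Your overall strategy is the same as the paper's: reduce to Proposition~\ref{prop:vanishing_fixed_l} and run a downward induction on the length to translate $(k-1)$-short data into $\ell$-short data. Both directions are structured correctly. Two comments.

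In the forward direction, the parenthetical worry about whether ``the $\ell$-short moves disturb the long subpath of $p'$'' is not a real issue, and the step you flag as requiring the most care in fact requires none. Proposition~\ref{prop:vanishing_fixed_l}(2) asks only that a minimal long path $p$ of length $k$ be $(k-1)$-shortly congruent to \emph{some} non-minimal long path. You already produced such a path: $p'$, being $\ell$-reducible with $\ell<k$, has a proper long subpath and hence is non-minimal, and $p\sim_\ell p'$ implies $p\sim_{(k-1)}p'$ immediately because every $\ell$-short move is an $(k-1)$-short move. Nothing about intermediate terms needs to be tracked, and the ``runs the sequence from $p'$ toward $p$'' machinery is superfluous. (Similarly, in the shortest-path case, the ``occurs strictly inside the path'' observation is not needed: an $\ell$-short move is a $(k-1)$-short move simply because $\ell\le k-1$.)

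In the backward direction, the first clause matches the paper's induction on $k$: a $(k-1)$-short move between two length-$k$ shortest paths replaces a shortest subpath of length $\le k-1$, and the inductive hypothesis at those shorter lengths refines that replacement into a chain of $\ell$-short moves. Your mid-paragraph false start is self-corrected, so this part is fine. For the second clause, however, your sketch via ``$p$ contains a minimal long subpath'' does not by itself give a well-founded induction: when $p$ is itself a minimal long path, that subpath is $p$ and has the same length. The clean version (the paper's) is a case split on $p$ itself: if $p$ of length $k$ is non-minimal it has a long subpath of length $\le k-1$, which one extends to a long subpath $\hat p$ of length exactly $k-1$ (any subpath containing a long subpath is long); if $p$ is minimal, Proposition~\ref{prop:vanishing_fixed_l}(2) plus the already-established first clause gives $p\sim_\ell \tilde p$ with $\tilde p$ non-minimal of length $k$, and one extracts $\hat p$ from $\tilde p$. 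Either way $\hat p$ has strictly smaller length, so induction applies, and the ideal property of $\ell$-quasi-reducibility together with transitivity of $\sim_\ell$ finishes. Your proposal gestures at all of these ingredients but leaves the decisive reduction to a strictly shorter long path unstated; make the case split on minimality of $p$ explicit and the argument closes.
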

\begin{proof} If two paths are $\ell$-shortly congruent, then they are $(k-1)$-shortly congruent for any $k>\ell.$ Therefore $(\mathcal{V}_\ell)$ implies that $\MH_{2,k}(G)=0$ for $k>\ell$ by Proposition \ref{prop:vanishing_fixed_l}.

Now assume that $\MH_{2,k}(G)=0$ for $k>\ell$ and prove property $(\mathcal{V}_\ell).$ First we prove that two shortest paths connecting the same vertices are $\ell$-shortly congruent. Note that two shortest paths of length $\leq \ell$ are $\ell$-shortly congruent by the definition. So we can assume that the shortest paths have the length $\ell+n$ for some $n\geq 1.$  
Proposition \ref{prop:vanishing_fixed_l} implies that for any $n\geq 1$ any two shortest paths of length $\ell+n$ connecting the same vertices are $(\ell+n-1)$-shortly congruent. 
We need to prove that they are $\ell$-shortly congruent. The proof is by induction on $n$. The base of induction for $n=1$ is obvious. Let us prove the induction step. Assume that $s$ and $\tilde s$ are two shortest paths of length $\ell+n.$ Since they are $(\ell+n-1)$-shortly congruent, there is a sequence of shortest paths $s=s_0,\dots,s_m=\tilde s$ such that for any $0\leq i\leq m-1$ we have $s_i=p_i t_i p'_i$ and $s_{i+1}=p_i\tilde t_i p_i'$ such that $t_i$ and $\tilde t_i$ are shortest paths of length  $\leq \ell+k-1.$ By the inductive assumption,  $t_i$ and $\tilde t_i$ are $\ell$-shortly congruent. Therefore, $s_i$ and $s_{i+1}$ are $\ell$-shortly congruent. Hence $s$ and $\tilde s$ are $\ell$-shortly congruent. 

Let us prove that any long path is $\ell$-quasi-reducible. Any long path of length at most $\ell$ is $\ell$-reducible. So we can assume that the long path has length $\ell+n$ for some $n\geq 1.$ Since we already know that any two shortest paths connecting the same vertices are $\ell$-shortly congruent, the $(\ell+n-1)$-short congruence coincides with the $\ell$-short congruence for any $n\geq 1.$ Then Proposition \ref{prop:vanishing_fixed_l} implies that for any $n\geq 1$ any  long path $p$ of length  $\ell+n$ is $\ell$-shortly congruent to an $(\ell+n-1)$-reducible long path $\tilde p$. Choose a long subpath $\hat p$ of length $(\ell+n-1)$ in $\tilde p.$ By inductive assumption $\hat p$ is $\ell$-quasi-reducible. Since the set of $\ell$-quasi-reducible paths is an ideal, we obtain that $\tilde p$ and $p$ are also $\ell$-quasi-reducible. 
\end{proof}

\begin{corollary}
Any diagonal digraph satisfies $(\mathcal{V}_2).$
\end{corollary}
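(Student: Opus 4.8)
The plan is to reduce the statement ``any diagonal digraph satisfies $(\mathcal{V}_2)$'' directly to Theorem~\ref{th:vanishing} with $\ell = 2$. Recall that $G$ diagonal means $\MH_{n,\ell}(G,\ZZ) = 0$ whenever $n \neq \ell$. In particular, taking $n = 2$, we get $\MH_{2,\ell}(G,\ZZ) = 0$ for every $\ell \neq 2$, i.e. for every $k > 2$ (as well as for $k = 1$, though that is automatic). By Theorem~\ref{th:vanishing} applied with $\ell = 2$, the condition $\MH_{2,k}(G) = 0$ for all $k > 2$ is equivalent to $G$ satisfying $(\mathcal{V}_2)$. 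So the corollary is essentially an immediate consequence.

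The only point that needs a word of care is that Theorem~\ref{th:vanishing} is stated for an arbitrary commutative ring $\KK$, whereas diagonality is a condition about integral magnitude homology. But by Theorem~\ref{theorem:second} (or Corollary~\ref{cor:ranks}), $\MH_{2,\ell}(G,\KK)$ is a free $\KK$-module whose rank does not depend on $\KK$, and in particular $\MH_{2,k}(G,\ZZ) = 0$ if and only if $\MH_{2,k}(G,\KK) = 0$ for any (hence all) $\KK$. Thus the integral vanishing $\MH_{2,k}(G,\ZZ) = 0$ for $k > 2$ is exactly the hypothesis needed to invoke Theorem~\ref{th:vanishing}. There is no real obstacle here: the substance was already done in Theorem~\ref{th:vanishing} and in the rank computation of Corollary~\ref{cor:ranks}.

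In short, I would write: \emph{If $G$ is diagonal, then $\MH_{2,k}(G,\ZZ) = 0$ for all $k \neq 2$, in particular for all $k > 2$. By Corollary~\ref{cor:ranks} this vanishing holds over every commutative ring, so Theorem~\ref{th:vanishing} (with $\ell = 2$) gives that $G$ satisfies $(\mathcal{V}_2)$.} This is a two-line proof, and the expected ``main obstacle'' is merely making sure the ring-independence of $\MH_{2,*}$ is cited, which the earlier results supply for free.
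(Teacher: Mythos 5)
Your proof is correct and follows the same (essentially forced) route the paper intends: the corollary is an immediate specialization of Theorem~\ref{th:vanishing} with $\ell = 2$, and the paper accordingly gives no separate proof. The extra paragraph about ring-independence is harmless but unnecessary — Theorem~\ref{th:vanishing} holds over any fixed commutative ring $\KK$, so you can simply invoke it with $\KK = \ZZ$, matching the integral definition of diagonality, without appealing to Corollary~\ref{cor:ranks}.
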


\subsection{Thinness of the fundamental category} 

In this subsection, for a digraph $G$ and a positive integer $\ell,$ we define a category $\tau^\ell(G)$ that we call  $\ell$-fundamental category of $G$.
It is closely related with the $\ell$-fundamental groupoid $\Pi^\ell(G)$ and $\ell$-fundamental group $\pi^\ell_1(G)$ studied in \cite{di2024path}. We will show that if there is an integer $\ell$ such that $\MH_{2,k}(G)=0$ for any $k>\ell,$ then $\tau^\ell(G)$ is a thin category.  

We denote by $\tau$ the functor of the fundamental category of a simplicial set $\tau:{\sf sSets} \to {\sf Cat},$ which is the left adjoint to the nerve functor ${\sf Nrv}:{\sf Cat} \to {\sf sSets}$. It can be explicitly described as follows (see \cite[\S II.4.1]{gabriel2012calculus},  \cite[\S 1.3]{joyal2008notes}). 
Let $X$ be a simplicial set with face maps denoted by $\partial_i$ and degeneracy maps denoted by $\sigma_i.$ Then $\tau(X)$ is the quotient of the free path category of its $1$-skeleton (treated as a quiver) by relations $\partial_0t \circ \partial_2t=\partial_1t,$ where $t$ is a $2$-simplex, and relations $\sigma_0 x=1_x,$ where $\alpha$ is a $1$-simplex. The composition is defined by the concatenation, and the identity morphisms are degenerate $1$-simplices $1_x=\sigma_0x$.  The groupoid associated with $\tau(X)$ is the fundamental groupoid of $X$ \cite[\S II.7.1]{gabriel2012calculus}
\begin{equation}
\Pi(X) = \tau(X)^{\sf gpd}.
\end{equation}

Following \cite{di2024path}, we denote by $\NN(G)$ the nerve of a digraph $G,$ which is a simplicial set, whose $n$-simplices are tuples of vertices $(x_0,\dots,x_n)$ such that for any $0\leq i\leq n-1$ there is a path from $x_i$ to $x_{i+1}.$ The face maps are defined by deleting a vertex, and the degeneracy maps are defined by doubling of vertices. This simplicial set has a natural filtration by simplicial subsets
\begin{equation}
    \NN^1(G) \subseteq \NN^2(G) \subseteq \dots \subseteq \NN(G),
\end{equation}
where $n$-simplices of $\NN^\ell(X)$ are tuples of vertices $(x_0,\dots,x_n)$ such that $|x_0,\dots,x_n|\leq \ell.$ The $\ell$-fundamental category of $G$ is defined by 
\begin{equation}
\tau^\ell(G) = \tau(\NN^\ell(G)).
\end{equation}
Then the $\ell$-fundamental groupoid $\Pi^\ell(G)=\Pi(\NN^\ell(G))$ studied in \cite{di2024path} is the groupoid associated with the $\ell$-fundamental category 
\begin{equation}
\Pi^\ell(G) = \tau^\ell(G)^{\sf gpd}
\end{equation}
and the $\ell$-fundamental group is defined by $\pi^\ell_1(G,x)=\Pi^\ell(G)(x,x).$

The $\ell$-fundamental category can be described in terms of equivalence classes of paths.
\begin{definition}[$\tau^\ell$-congruence] 
$\tau^\ell$-congruence is the minimal congruence on the set of paths such that any two paths connecting the same vertices  of (possibly different) lengths at most $\ell$ are equivalent. 
\end{definition}

Note that $\ell$-shortly congruent paths are $\tau^\ell$-congruent. However, $\tau^\ell$-congruent paths can have different lengths. 

\begin{example}
 In the following graph 
\begin{equation}\label{eq:graphs:counter}
\begin{tikzcd}[row sep=0mm]
&1 \ar[r,-] \ar[dd,-]  & 3 \ar[dd,-] \ar[rd,-]\\
0\ar[ru,-] \ar[rd,-] & && 5\\
&2\ar[r,-] & 4\ar[ru,-] 
\end{tikzcd}
\end{equation}
all the paths $(0,1,3,5),$ $(0,1,3,4,5),$ and $(0,2,4,5)$ are $\tau^2$-congruent, but all of them are not $2$-shortly congruent to each other.   
\end{example}

\begin{example}
Another example is given by the following graph. 
\begin{equation}
\begin{tikzcd}
0 \ar[r,-] \ar[d,-]\ar[dr,-]  & 1 \ar[d,-]\\
2\ar[r,-] & 3 
\end{tikzcd}
\end{equation}
The paths $(0,1,3)$ and $(0,2,3)$ are $\tau^2$-congruent but not $2$-shortly congruent, because they are not shortest.  
\end{example}

The following proposition is an analogue of the description of the fundamental groupoid $\Pi^\ell(G)$ given in \cite[Prop.2.2]{di2024path}.

\begin{proposition}\label{prop:tau^l-description}
The fundamental category $\tau^\ell(G)$ of a digraph $G$ is naturally isomorphic to the category, whose objects are vertices of $G$, whose morphisms from a vertex $x$ to a vertex $y$ are $\tau^\ell$-congruence classes of paths from $x$ to $y,$ and the composition is defined by concatenation. 
\end{proposition}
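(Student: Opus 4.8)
The strategy is to realise both categories as quotients of the category of paths in $G$. Write $\mathcal P(G)$ for the category of paths in $G$ (objects the vertices, morphisms the paths, composition the concatenation), and let $\mathcal C(G)$ denote the category of the statement, i.e.\ the quotient of $\mathcal P(G)$ by the $\tau^\ell$-congruence. Using the explicit description of $\tau$ recalled above, $\tau^\ell(G)=\tau(\NN^\ell(G))$ is the quotient of the free path category on the quiver $Q_\ell$ --- whose vertices are those of $G$ and whose arrows $e_{x,y}$ are the $1$-simplices of $\NN^\ell(G)$, i.e.\ the ordered pairs $(x,y)$ with $d(x,y)\le\ell$ --- by the relations $e_{x_1,x_2}\circ e_{x_0,x_1}=e_{x_0,x_2}$ for every $2$-simplex $(x_0,x_1,x_2)$ of $\NN^\ell(G)$ and $e_{x,x}=1_x$ for every vertex $x$. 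Since $\ell\ge1$, every arrow of $G$ is an arrow of $Q_\ell$, so there is a canonical functor $\Phi\colon\mathcal P(G)\to\tau^\ell(G)$, the identity on vertices, carrying an arrow of $G$ to the corresponding generator. The whole proof amounts to showing that $\Phi$ induces an isomorphism $\mathcal C(G)\cong\tau^\ell(G)$.

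The first step I would establish is an auxiliary lemma: for every simplex $(x_0,\dots,x_n)$ of $\NN^\ell(G)$ one has $e_{x_{n-1},x_n}\circ\cdots\circ e_{x_0,x_1}=e_{x_0,x_n}$ in $\tau^\ell(G)$. This is an induction on $n$: for $n\le2$ it is one of the defining relations, and for $n\ge3$ one feeds the inductive hypothesis for the face $(x_0,\dots,x_{n-1})$ into the relation attached to the $2$-simplex $(x_0,x_{n-1},x_n)$, the triangle inequality ensuring that both of these tuples are indeed simplices of $\NN^\ell(G)$. Applied to the underlying tuple of a path $p=(x_0,\dots,x_n)$ in $G$ with $n\le\ell$ (which has norm at most $n$, hence is a simplex of $\NN^\ell(G)$), the lemma gives $\Phi(p)=e_{x_0,x_n}$, depending only on the endpoints of $p$. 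Hence the congruence on $\mathcal P(G)$ cut out by the fibres of $\Phi$ contains every generating relation of the $\tau^\ell$-congruence, so it contains the $\tau^\ell$-congruence, and $\Phi$ factors through a functor $F\colon\mathcal C(G)\to\tau^\ell(G)$ which is the identity on objects.

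Next I would construct the inverse. For an arrow $e_{x,y}$ of $Q_\ell$ one has $d(x,y)\le\ell$, so there is a path from $x$ to $y$ of length $\le\ell$, and by definition any two such are $\tau^\ell$-congruent; write $c_{x,y}\in\mathcal C(G)$ for their common class. The rule $e_{x,y}\mapsto c_{x,y}$ defines a functor from the free path category on $Q_\ell$ to $\mathcal C(G)$, and one checks it annihilates the defining relations of $\tau^\ell(G)$: for a $2$-simplex $(x_0,x_1,x_2)$ we have $d(x_0,x_1)+d(x_1,x_2)\le\ell$, so concatenating a shortest $x_0\to x_1$ path with a shortest $x_1\to x_2$ path yields a path of length $\le\ell$ from $x_0$ to $x_2$, whence $c_{x_1,x_2}\circ c_{x_0,x_1}=c_{x_0,x_2}$, while $c_{x,x}=[(x)]=1_x$. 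So it descends to a functor $H\colon\tau^\ell(G)\to\mathcal C(G)$. Finally I would check that the two composites are identities: $H\circ F=\mathrm{id}$ since for a path $p=(x_0,\dots,x_n)$ one has $HF[p]=c_{x_{n-1},x_n}\circ\cdots\circ c_{x_0,x_1}=[(x_0,\dots,x_n)]=[p]$ (a shortest path between distinct adjacent vertices being the joining arrow), and $F\circ H=\mathrm{id}$ since on each generator $FH(e_{x,y})=F(c_{x,y})=\Phi(s)=e_{x,y}$ for a shortest path $s$ from $x$ to $y$ by the auxiliary lemma, and such generators generate $\tau^\ell(G)$. Naturality in $G$ is automatic: a digraph morphism is distance non-increasing, hence induces compatible maps of nerves, path categories, and their quotients, all compatible with $\Phi$.

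The step I expect to be the main obstacle is the auxiliary lemma together with the accompanying bookkeeping --- organising the induction, recognising the needed low-dimensional tuples as iterated faces of the given simplex, and, above all, making precise that the $\tau^\ell$-congruence is generated as a congruence by the stated relations, which is what licenses factoring $\Phi$ through $\mathcal C(G)$. The remainder is formal manipulation with the presentation of $\tau$.
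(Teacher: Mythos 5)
Your proof is correct and follows essentially the same approach as the paper: you construct mutually inverse functors between $\tau^\ell(G)$ and the category of $\tau^\ell$-congruence classes, and the crux in both cases is the same auxiliary lemma (proved by the same induction via $2$-simplices) showing that a tuple of norm at most $\ell$ collapses to the single generator joining its endpoints. Your framing via factorization through the quotient of the path category, and your slightly more general statement of the lemma for arbitrary simplices of $\NN^\ell(G)$ rather than just paths in $G$, are only cosmetic variations on the paper's argument.
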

\begin{proof}
In the proof, denote by $\mathcal C$ the category, whose objects are vertices of $G$ and morphisms are $\tau^\ell$-congruence classes of paths. The $\tau^\ell$-congruence class of a path $(x_0,\dots,x_n)$ will be denoted by $[x_0,\dots,x_n].$ We need to prove that $\tau^\ell(G)\cong \mathcal C.$ The plan of the proof is the following: we construct two functors $\Psi:\tau^\ell(G) \to \mathcal C$ and $\Phi:\mathcal C \to \tau^\ell(G)$ and prove that $\Psi\Phi = {\sf Id}$ and $\Phi\Psi = {\sf Id}.$

Before we start constructing the functors, we need to note that, if $(x_0,\dots,x_n)$ is a path of length $n\leq \ell$ in $G,$ then  there is the following equation in $\tau^\ell(G)$
\begin{equation}\label{eq:path-simpl}
((x_0,x_1),(x_1,x_2),\dots,(x_{n-1},x_n)) =((x_0,x_n)). 
\end{equation}
Indeed, for any $0\leq i\leq n-2$ we have $d(x_i,x_{i+1})+d(x_{i+1},x_n)\leq \ell,$ and hence $(x_i,x_{i+1},x_n)$ is a $2$-simplex in $\NN^\ell(G).$ Therefore $((x_i,x_{i+1}),(x_{i+1},x_n))=((x_i,x_n)),$ and we obtain \eqref{eq:path-simpl} by induction.  

The functor $\Phi:\mathcal C\to \tau^\ell(G)$ is defined identically on objects and sends $[x_0,\dots,x_n]$ in $G$ to the class of the path $((x_0,x_1),\dots,(x_{n-1},x_n))$ of $1$-simplices of $\NN^\ell(X).$ In order to show that $\Phi$ is well defined, we need to check that $\tau^\ell$-congruent paths have the same images. 
It is sufficient to show that two different paths $(x_0,\dots,x_n)$ and $(y_0,\dots,y_m)$ of lengths $n,m\leq \ell$ connecting the same vertices $x_0=y_0$ and $x_n=y_m$ have the same images in $\tau^\ell(G)$. It follows from \eqref{eq:path-simpl}. Hence the functor $\Phi$ is well defined. 

The functor $\Psi:\tau^\ell(G)\to \mathcal C$ is defined identically on objects. If $(x,y)$ is a $1$-simplex of $\NN^\ell(G),$ then $d(x,y)\leq n$ and we define $\Psi((x,y))=[x_0,\dots,x_n],$ where $(x_0,\dots,x_n)$ is a path from $x$ to $y$ of any length $n$ such that $d(x,y)\leq n\leq \ell.$  Since $n\leq \ell,$ any two such paths are $\tau^\ell$-congruent, and the definition does not depend on the choice. If we have a $2$-simplex $(x,y,z)$ of $\NN^\ell(G),$ then $d(x,y)+d(y,z)\leq \ell$ and the concatenation of the shortest path from $x$ to $y$ and the shortest path from $y$ to $z$ has length at most $\ell.$ Therefore $\Psi((y,z)) \circ \Psi((x,y)) = \Psi((x,z)).$ Therefore, the definition on objects and $1$-simplices gives a well defined functor $\Psi:\tau^\ell(G)\to \mathcal C.$

The fact that $\Psi \Phi={\sf Id}$ is straightforward. Let us prove that $\Phi \Psi = {\sf Id}.$ For objects it is obvious. Let us prove it for morphisms. It is sufficient to check it for $1$-simplices. If we have a $1$-simplex $(x,y)$ in $\NN^\ell(G),$ then $\Phi \Psi((x,y))=((x_0,x_1),(x_1,x_2),\dots,(x_{n-1},x_n)),$ where $(x_0,\dots,x_n)$ is a path of length $n$ from $x$ to $y$ such that $d(x,y)\leq n\leq \ell.$ Then  \eqref{eq:path-simpl} implies $\Phi\Psi((x,y))=(x,y).$
\end{proof}

A category is called thin if all its hom-sets have cardinality at most one. Thin categories are in one-to-one correspondence with preorders. 

\begin{proposition}\label{prop:tau-thin}
If $G$ is a finite digraph and $\ell\geq 1$ is an integer such that $\MH_{2,k}(G)=0$ for any $k>\ell,$ then $\tau^\ell(G)$ is thin. 
\end{proposition}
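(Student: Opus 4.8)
The plan is to reduce, via Theorem~\ref{th:vanishing} and Proposition~\ref{prop:tau^l-description}, to a purely combinatorial statement about congruences of paths, and then to prove that statement by induction on length. By Theorem~\ref{th:vanishing}, the hypothesis ``$\MH_{2,k}(G)=0$ for all $k>\ell$'' is exactly property $(\mathcal V_\ell)$. By Proposition~\ref{prop:tau^l-description}, the hom-set $\tau^\ell(G)(x,y)$ is the set of $\tau^\ell$-congruence classes of paths from $x$ to $y$. Hence $\tau^\ell(G)$ is thin if and only if, assuming $(\mathcal V_\ell)$, any two paths connecting the same pair of vertices are $\tau^\ell$-congruent.

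The key lemma I would establish first is: under $(\mathcal V_\ell)$, every path $p$ is $\tau^\ell$-congruent to a shortest path between its endpoints. I would prove this by strong induction on ${\rm len}(p)$. If ${\rm len}(p)\le\ell$ this is immediate from the definition of $\tau^\ell$-congruence, because a shortest path between the endpoints of $p$ then also has length $\le\ell$ and so is $\tau^\ell$-congruent to $p$. For ${\rm len}(p)>\ell$: if $p$ is shortest there is nothing to do; otherwise $(\mathcal V_\ell)$ says $p$ is $\ell$-quasi-reducible, i.e.\ $\ell$-shortly congruent — hence, since $\ell$-shortly congruent paths are $\tau^\ell$-congruent, also $\tau^\ell$-congruent — to an $\ell$-reducible path $\tilde p$ of the same length. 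Writing $\tilde p = a\,\hat p\,b$ with $\hat p$ a long subpath of length $\le\ell$ and $a,b$ possibly trivial, I replace $\hat p$ by a shortest path $s$ between its endpoints; since ${\rm len}(\hat p)\le\ell$ the paths $\hat p$ and $s$ are $\tau^\ell$-congruent, so $p$ is $\tau^\ell$-congruent to $a\,s\,b$, and ${\rm len}(a\,s\,b)<{\rm len}(p)$ because $\hat p$ is long and hence ${\rm len}(s)<{\rm len}(\hat p)$. The inductive hypothesis applied to $a\,s\,b$ then finishes the step.

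Granting the lemma, the conclusion is immediate: given two paths $p,q$ from $x$ to $y$, each is $\tau^\ell$-congruent to some shortest path from $x$ to $y$, and by the first clause of $(\mathcal V_\ell)$ all shortest paths with fixed endpoints are $\ell$-shortly congruent, hence $\tau^\ell$-congruent; so $p$ and $q$ are $\tau^\ell$-congruent. Via Proposition~\ref{prop:tau^l-description} this says every hom-set of $\tau^\ell(G)$ has at most one element, i.e.\ $\tau^\ell(G)$ is thin.

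I expect the main obstacle to be the inductive step of the lemma, and specifically two points inside it: checking that the reduction $\tilde p = a\,\hat p\,b \to a\,s\,b$ strictly decreases the length (which uses that $\hat p$ is genuinely long, so ${\rm len}(s)<{\rm len}(\hat p)$), and keeping the entire chain of moves inside the $\tau^\ell$-congruence rather than merely inside some coarser relation. This is the same subtle propagation phenomenon that appears in the proof of Theorem~\ref{th:vanishing}, where under $(\mathcal V_\ell)$ the $m$-short congruence coincides with the $\ell$-short congruence for all $m\ge\ell$; one should make sure the present argument only ever invokes facts that are already available from $(\mathcal V_\ell)$.
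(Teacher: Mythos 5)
Your proof is correct and follows essentially the same route as the paper's: reduce to showing all paths between fixed endpoints are $\tau^\ell$-congruent via Proposition~\ref{prop:tau^l-description}, invoke Theorem~\ref{th:vanishing} to get $(\mathcal V_\ell)$, then argue by induction on length that every path is $\tau^\ell$-congruent to a shortest path by peeling off short long subpaths, and finally handle shortest paths directly. The paper is terser but the key lemma you isolate and the induction you run are the ones the paper implicitly uses.
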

\begin{proof} Proposition \ref{prop:tau^l-description} implies that we need to prove that any two paths connecting the same vertices are $\tau^\ell$-congruent. 
By Theorem \ref{th:vanishing} any two shortest paths are $\ell$-shortly congruent and any long path is $\ell$-shortly congruent to an $\ell$-reducible long path. 
Since $\ell$-short congruence implies $\tau^\ell$-congruence, any two shortest paths connecting the same vertices are $\tau^\ell$-congruent. Note that any $\ell$-reducible path is $\tau^\ell$-congruent to a path of shorter length. Then any long path is $\tau^\ell$-congruent to a path of shorter length. Therefore by induction we obtain that any long path is $\tau^\ell$-congruent to a shortest path. Hence all paths between two vertices are $\tau^\ell$-congruent.
\end{proof}

\begin{remark} The condition that $\tau^\ell(G)$ is thin is not equivalent to the fact that $\MH_{2,k}(G)=0$ for any $k>\ell.$ For example, 
for the graph \eqref{eq:graphs:counter} the category $\tau^2(G)$ is thin, but $\MH_{2,3}(G)\neq 0,$ because paths $(0,1,3,5)$ and $(0,1,3,5)$ are two shortest paths of length $3$ which are not $2$-shortly congruent.
\end{remark}

\subsection{Undirected graphs}

By an undirected graph we mean a directed graph such that for any arrow $(x,y)$ the pair $(y,x)$ is also an arrow. 

\begin{lemma}\label{lemma:long}
Let $G$ be an undirected graph and $x,y$ be its vertices. Then 
\begin{enumerate}
    \item a minimal long path from $x$ to $y$ has length either $d(x,y)+1$ or $d(x,y)+2;$
    \item any minimal long path of length $d(x,y)+2$ is $(d(x,y)+1)$-shortly congruent to a $2$-reducible path.
\end{enumerate}

\end{lemma}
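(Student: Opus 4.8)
The plan is to exploit the special feature of undirected graphs: if $p = (p_0, p_1, \dots, p_n)$ is a path, then every subpath $(p_i, p_{i+1}, \dots, p_j)$ is also a path, and moreover $d(p_i, p_{i+1}) = 1$ for every $i$, so the ``norm'' of a tuple coincides with its length when the tuple is a path. Thus a path $p$ of length $n$ from $x$ to $y$ is shortest precisely when $n = d(x,y)$, and for any split $p = p' p''$ we get $\mathrm{len}(p') \geq d(p_0, \mathrm{mid})$ and $\mathrm{len}(p'') \geq d(\mathrm{mid}, p_n)$, with the length of $p$ being the sum. This is the only structural input I expect to need.

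For part (1), let $p = (p_0, \dots, p_n)$ be a minimal long path from $x = p_0$ to $y = p_n$, so $n > d(x,y)$ while both proper subpaths $p^- = (p_0, \dots, p_{n-1})$ and $p^+ = (p_1, \dots, p_n)$ are shortest. Then $\mathrm{len}(p^-) = d(x, p_{n-1}) = n-1$ and $\mathrm{len}(p^+) = d(p_1, y) = n-1$. From $d(x, p_{n-1}) = n-1$ and the triangle inequality $d(x, y) \geq d(x, p_{n-1}) - d(p_{n-1}, y) \geq (n-1) - 1 = n-2$ we get $d(x,y) \geq n - 2$; combined with $d(x,y) < n$ this forces $d(x,y) \in \{n-1, n-2\}$, i.e.\ $n \in \{d(x,y)+1, d(x,y)+2\}$.

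For part (2), suppose $p = (p_0, \dots, p_n)$ is a minimal long path of length $n = d(x,y) + 2$. Write $d = d(x,y) = n - 2$. Since $p^- = (p_0, \dots, p_{n-1})$ is shortest, $d(x, p_{n-1}) = n - 1 = d + 1$; since $p^+ = (p_1, \dots, p_n)$ is shortest, $d(p_1, y) = n - 1 = d+1$. Look at the vertex $p_1$: we have $d(p_1, y) = d+1$, and a shortest path from $p_1$ to $y$ together with the arrow $(x, p_1)$ gives a path of length $d + 2 = n$ from $x$ to $y$; this is long (it exceeds $d$), and I claim it can be chosen to be a $2$-reducible path $(x, p_1) \cdot t$, where $t$ is a shortest path from $p_1$ to $y$ — the issue is whether the concatenation $(x, p_1, t_1, \dots)$ contains a long subpath of length $\le 2$. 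The key observation is that $(x, p_1, p_2)$ either is itself a long subpath of length $2$ (in which case $p$ is already $2$-reducible, hence trivially $(d+1)$-shortly congruent to a $2$-reducible path, namely itself — but wait, a minimal long path has all proper subpaths shortest, so $(x, p_1, p_2)$ is \emph{shortest}, meaning $d(x, p_2) = 2$). So instead I argue: the truncation $(p_1, \dots, p_n) = p^+$ is a shortest path of length $d+1$ from $p_1$ to $y$; now $(x, p_1) \cdot p^+ = (x, p_1, p_2, \dots, p_n) = p$ itself. That is circular, so the real content is to replace $p$ by a \emph{different} representative. Here is the fix: since $d(x, p_{n-1}) = d+1$ but $d(x, y) = d$, the path $(p_0, \dots, p_{n-1}) = p^-$ is a shortest path of length $d+1$ from $x$ to $p_{n-1}$; extend any shortest path $s$ from $x$ to $y$ (length $d$) by the arrow $(y, p_{n-1})$ — wait, we need $(y, p_{n-1})$ to be an arrow, equivalently $(p_{n-1}, y) = (p_{n-1}, p_n)$ to be an arrow, which it is since $p$ is a path and $G$ is undirected. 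So $s' := s \cdot (y, p_{n-1})$ is a path of length $d+1$ from $x$ to $p_{n-1}$, hence $(d+1)$-shortly congruent to $p^-$ (both are shortest paths from $x$ to $p_{n-1}$ of length $\le d+1$). Therefore $p = p^- \cdot (p_{n-1}, y)$ is $(d+1)$-shortly congruent to $s' \cdot (p_{n-1}, y) = s \cdot (y, p_{n-1}) \cdot (p_{n-1}, y) = s \cdot (y, p_{n-1}, y)$. The tail $(y, p_{n-1}, y)$ is a path of length $2$ with $d(y,y) = 0 < 2$, so it is long; hence $s \cdot (y, p_{n-1}, y)$ is a $2$-reducible path, and we are done.

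The main obstacle I anticipate is exactly the subtlety just navigated in part (2): the naive ``reduce at the endpoints'' move is not directly available because a minimal long path has \emph{no} long subpath of length $\le 2$ (all proper subpaths are shortest), so one genuinely must pass to a congruent path before any reduction becomes visible. The trick is to route through a shortest path $s$ from $x$ to $y$ and re-append the final arrow in a ``there and back'' fashion, which manufactures the short long-subpath $(y, p_{n-1}, y)$; verifying that this uses only $(d+1)$-short moves (i.e.\ that the relevant shortest paths have length $\le d+1$) is the one point requiring care, and it follows from part (1) together with $d(x, p_{n-1}) = d + 1$.
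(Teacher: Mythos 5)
Your proof is correct and follows essentially the same route as the paper: part (1) via the triangle inequality applied to $d(x,p_{n-1})$, and part (2) by replacing the shortest prefix $p^-=(p_0,\dots,p_{n-1})$ with $s\cdot(y,p_{n-1})$ (a single $(d+1)$-short move, valid since both are shortest paths of length $d+1$ to $p_{n-1}$), producing the $2$-reducible path $s\cdot(y,p_{n-1},y)$. The exploratory detour in the middle of your part (2) can simply be deleted; the ``fix'' you land on is exactly the paper's argument.
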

\begin{proof} Set $n=d(x,y).$
Assume that $p=(p_0,\dots,p_m)$ is a path from $x$ to $y$ of length $m> n+2.$ The triangle inequality implies that $d(p_0,p_{m-1})\leq d(p_0,p_m)+d(p_m,p_{m-1})=n+1<m-1.$ Then $(p_0,\dots,p_{m-1})$ is a long path. Hence $p$ is not a minimal long path.

Assume that $p=(p_0,\dots,p_{n+2})$ is a minimal long path from $x$ to $y.$ Choose some shortest path $s=(s_0,\dots,s_n)$ from $x$ to $y.$  Then $(p_0,\dots,p_{n+1})$ is a shortest path from $x$ to $p_{n+1}$ and $d(x,p_{n+1})=n+1.$ Hence $(s_0,\dots,s_n,p_{n+1})$ is also a shortest path from $x$ to $p_{n+1}.$ Therefore, 
$(p_0,\dots, p_n, p_{n+1},p_{n+2})$ is $(n+1)$-shortly congruent to $(s_0,\dots,s_n,p_{n+1},p_{n+2}).$ Since $y=s_n=p_{n+2}$ and $n>0,$ we obtain that $(s_0,\dots,s_n,p_{n+1},p_{n+2})=(s_0,\dots,y,p_{n+1},y)$ is $2$-reducible. 
\end{proof}

\begin{proposition} Let $G$ be a finite undirected graph and $x,y$ be its vertices. Then 
\begin{enumerate}
    \item if $x\neq y$ and $\MH_{2,\ell}(G)_{x,y}\neq 0,$ then $\ell\in \{d(x,y),d(x,y)+1\};$
    \item if $\MH_{2,\ell}(G)_{x,x}\neq 0,$ then $\ell=2.$
\end{enumerate}
\end{proposition}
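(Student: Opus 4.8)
\emph{Approach.} The plan is to read both parts off the rank formula of Corollary~\ref{cor:ranks}, feeding in the structural facts about minimal long paths from Lemma~\ref{lemma:long}; since by Corollary~\ref{cor:ranks} the module $\MH_{2,\ell}(G)_{x,y}$ is free of the rank displayed there, it vanishes as soon as that rank is zero. Write $n=d(x,y)$.

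\emph{Part (1).} Here $n\ge 1$. Corollary~\ref{cor:ranks} already gives vanishing for $\ell<n$ (the case $d(x,y)>\ell$), so it remains to prove vanishing for $\ell\ge n+2$, where the rank equals $|\overline\tL_\ell(x,y)\setminus\overline\tN_\ell(x,y)|$; equivalently we must show that every long path $p$ of length $\ell$ from $x$ to $y$ represents a class in $\overline\tN_\ell(x,y)$. If $p$ is non-minimal it contains a proper long subpath, which has length $<\ell$, so $p$ is itself $(\ell-1)$-reducible and $[p]\in\overline\tN_\ell(x,y)$. If $p$ is minimal, Lemma~\ref{lemma:long}(1) forces $\ell=\mathrm{len}(p)\in\{n+1,n+2\}$, hence $\ell=n+2$; then Lemma~\ref{lemma:long}(2) makes $p$ be $(n+1)$-shortly --- that is, $(\ell-1)$-shortly --- congruent to a $2$-reducible path $\tilde p$, and since $n\ge 1$ we have $2\le\ell-1$, so $\tilde p$ is $(\ell-1)$-reducible and $[p]=[\tilde p]\in\overline\tN_\ell(x,y)$. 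In all cases the rank is $0$ for $\ell\ge n+2$, so nonvanishing of $\MH_{2,\ell}(G)_{x,y}$ forces $\ell\in\{n,n+1\}$.

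\emph{Part (2).} Here $n=0$, and we show $\MH_{2,\ell}(G)_{x,x}=0$ for every $\ell\ne 2$. For $\ell\le 1$ this holds because $\MC_{2,\ell}(G)=0$ (as $\MC_{m,\ell}(G)=0$ whenever $m>\ell$). For $\ell\ge 3$, Corollary~\ref{cor:ranks} gives rank $|\overline\tL_\ell(x,x)\setminus\overline\tN_\ell(x,x)|$; a long path $p$ of length $\ell$ from $x$ to $x$ cannot be minimal, since Lemma~\ref{lemma:long}(1) would force $\mathrm{len}(p)\in\{1,2\}$, so $p$ is non-minimal, hence $(\ell-1)$-reducible exactly as above, and $[p]\in\overline\tN_\ell(x,x)$. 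Thus the rank is $0$ for all $\ell\ne 2$.

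\emph{Main obstacle.} The only steps carrying real content are the two uses of Lemma~\ref{lemma:long}: ruling out minimal long paths whose length exceeds $n+2$, and absorbing the borderline length $n+2$ by a single $(\ell-1)$-short reduction. Everything else --- that a path containing a long subpath is long, that short moves preserve length, and the inequality $2\le\ell-1$ --- is routine bookkeeping. This is also where the hypothesis that $G$ is undirected is essential: Lemma~\ref{lemma:long} uses the symmetry of the metric, and its conclusion fails for general digraphs, as the non-diagonal digraphs satisfying $(\mathcal V_2)$ discussed in the introduction illustrate.
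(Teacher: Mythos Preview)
Your proof is correct and follows exactly the approach the paper intends: the paper's own proof is the single sentence ``It follows from Corollary~\ref{cor:ranks} and Lemma~\ref{lemma:long},'' and you have simply unpacked that sentence in full detail. The case split (non-minimal versus minimal long paths), the use of Lemma~\ref{lemma:long}(1) to bound the length of minimal long paths, and the use of Lemma~\ref{lemma:long}(2) to reduce the borderline case $\ell=n+2$ are precisely what the citation is meant to convey.
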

\begin{proof} It follows from Corollary \ref{cor:ranks} and Lemma  \ref{lemma:long}. 
\end{proof}

\begin{proposition}
If $G$ is a finite undirected graph and $\ell\geq 2$ is an integer such that $\MH_{2,k}(G)=0$ for $k>\ell,$ then $\pi_1^\ell(G,x)=1$ for any vertex $x$. 
\end{proposition}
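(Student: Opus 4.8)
The plan is to reduce this to the thinness statement (Proposition~\ref{prop:tau-thin}) by observing that, for an undirected graph $G$ and $\ell\geq 2$, the $\ell$-fundamental category $\tau^\ell(G)$ is already a groupoid, so that it coincides with the $\ell$-fundamental groupoid $\Pi^\ell(G)$, whose loop monoids at $x$ are by definition the groups $\pi_1^\ell(G,x)$.

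First I would recall that $\pi_1^\ell(G,x)=\Pi^\ell(G)(x,x)$ and $\Pi^\ell(G)=\tau^\ell(G)^{\sf gpd}$, so it suffices to show that $\tau^\ell(G)$ is a groupoid and that the hom-set $\tau^\ell(G)(x,x)$ is a singleton. For the groupoid claim I would use the description of $\tau^\ell(G)$ given in Proposition~\ref{prop:tau^l-description}: its morphisms $a\to b$ are $\tau^\ell$-congruence classes of paths from $a$ to $b$, with composition given by concatenation. Since $G$ is undirected, whenever $(a,b)$ is an arrow so is $(b,a)$; and since $\ell\geq 2$, the length-$2$ loop $(a,b,a)$ and the trivial path $(a)$ both connect $a$ to $a$ and both have length $\leq\ell$, hence are $\tau^\ell$-congruent directly from the definition of the $\tau^\ell$-congruence. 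Thus in $\tau^\ell(G)$ we get $[(b,a)]\circ[(a,b)]=[(a,b,a)]=[(a)]=\mathrm{id}_a$ and symmetrically $[(a,b)]\circ[(b,a)]=\mathrm{id}_b$, so the class of every single arrow is invertible. As every path is a concatenation of arrows, every morphism of $\tau^\ell(G)$ is a composite of invertible morphisms and hence invertible, so $\tau^\ell(G)$ is a groupoid and $\Pi^\ell(G)=\tau^\ell(G)$.

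It then remains to invoke the hypothesis: since $\MH_{2,k}(G)=0$ for all $k>\ell$, Proposition~\ref{prop:tau-thin} tells us that $\tau^\ell(G)$ is thin, so $\tau^\ell(G)(x,x)$ has at most one element; being a group it must be trivial. Combining, $\pi_1^\ell(G,x)=\Pi^\ell(G)(x,x)=\tau^\ell(G)(x,x)=1$.

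Once Propositions~\ref{prop:tau^l-description} and~\ref{prop:tau-thin} are available, the argument is essentially bookkeeping; the only step requiring genuine care is the verification that $\tau^\ell(G)$ is a groupoid, where one must use both that $G$ is undirected (to exhibit the candidate inverse) and that $\ell\geq 2$ (so that the loop $(a,b,a)$ is already identified with the identity in $\tau^\ell$). One could even avoid citing the full strength of Proposition~\ref{prop:tau-thin}: what is actually used is that any two paths connecting the same vertices are $\tau^\ell$-congruent, which is precisely what its proof establishes.
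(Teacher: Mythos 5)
Your proposal is correct and matches the paper's argument: both first show $\tau^\ell(G)$ is a groupoid by inverting each arrow via the $\tau^\ell$-congruence $(a,b,a)\sim(a)$ (valid since $G$ is undirected and $\ell\geq 2$), conclude $\tau^\ell(G)=\Pi^\ell(G)$, and then apply Proposition~\ref{prop:tau-thin}.
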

\begin{proof}
For an undirected digraph $G$ and $\ell\geq 2$ the $\ell$-fundamental category   $\tau^\ell(G)$ is a groupoid. Indeed for any arrow $(x,y)$ in $G$ the path $(x,y,x)$ is $\tau^\ell$-congruent to the trivial path $(x),$ and similarly $(y,x,y)$ is $\tau^\ell$-congruent to $(y).$ Hence $(x,y)$ is invertible. Therefore $\tau^\ell(G) = \Pi^\ell(G)$ and the assertion follows from Proposition \ref{prop:tau-thin}. 
\end{proof}

Grigor’yan--Lin--Muranov--Yau \cite{grigor2014homotopy} define the fundamental group of a digraph $G$ with base vertex $x,$ that we denote by $\pi_1^{\sf GLMY}(G,x).$ The abelianization of this group is the first path homology. Later this definition was extended by Grigor’yan--Jimenez--Muranov to the notion of  fundamental groupoid \cite{grigor2018fundamental} that we denote by $\Pi^{\sf GJM}(G)$.  It is known \cite[Th.2.4]{di2024path} that 
\begin{equation}
\pi_1^{\sf GLMY}(G,x)\cong \pi_1^2(G,x).
\end{equation}

\begin{corollary}\label{cor:fundamental_diagonal}
If $G$ is a finite undirected graph such that $\MH_{2,k}(G,\ZZ)=0$ for $k>2$, then the group $\pi_1^{\sf GLMY}(G,x)$ is trivial for any vertex $x.$
\end{corollary}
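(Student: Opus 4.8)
The plan is to deduce this corollary directly from the preceding results by chaining together the known isomorphisms. First I would invoke the result of Di--Zhang and the authors quoted just above the statement, namely $\pi_1^{\sf GLMY}(G,x) \cong \pi_1^2(G,x)$, which reduces the problem to showing that $\pi_1^2(G,x)$ is trivial for any vertex $x$. Then I would apply the immediately preceding proposition: since $G$ is a finite undirected graph with $\MH_{2,k}(G,\ZZ)=0$ for all $k>2$, that proposition gives $\pi_1^\ell(G,x)=1$ for $\ell=2$ and every vertex $x$. Combining these two facts yields $\pi_1^{\sf GLMY}(G,x)\cong \pi_1^2(G,x)=1$, which is the claim.

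There is essentially no obstacle here: the corollary is a formal consequence of results already established in the excerpt, so the "proof" is just a two-line citation chain. The only point requiring a modicum of care is the hypothesis bookkeeping --- one must check that the hypotheses of the cited proposition are exactly met, i.e. that $G$ is finite and undirected and that the vanishing $\MH_{2,k}(G,\ZZ)=0$ holds for all $k>\ell=2$, which is precisely what is assumed. One should also note that the proposition is stated over an arbitrary commutative ring in its general form, but here we only need the integral statement, and the hypothesis is stated integrally, so no coefficient-change subtlety arises.

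Thus the proof I would write is simply: by \cite[Th.~2.4]{di2024path} (the displayed isomorphism $\pi_1^{\sf GLMY}(G,x)\cong \pi_1^2(G,x)$), it suffices to prove $\pi_1^2(G,x)=1$; and this follows from the previous proposition applied with $\ell=2$, whose hypotheses hold by assumption. The main "work," such as it is, was already done in proving Theorem~\ref{th:vanishing} and Proposition~\ref{prop:tau-thin}, so this corollary merely packages those conclusions in GLMY-theoretic language.
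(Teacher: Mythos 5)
Your proof is correct and is exactly the (unwritten) argument the paper intends: the corollary follows immediately from the preceding proposition applied with $\ell=2$ and the cited isomorphism $\pi_1^{\sf GLMY}(G,x)\cong\pi_1^2(G,x)$. The only small point worth sharpening in your coefficient remark is that the match-up is painless because Theorem~\ref{th:vanishing} characterizes the vanishing of $\MH_{2,k}$ by the purely combinatorial condition $(\mathcal{V}_\ell)$, so it holds over $\ZZ$ if and only if it holds over any~$\KK$.
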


For an undirected digraph $G$ we denote by ${\sf CW}^2(G)$ a two-dimensional CW-complex obtained from the geometric realization of $G$ by attaching 2-cells to all its squares and triangles. Then it is known that  
\begin{equation}
\pi_1^{\sf GLMY}(G,x) = \pi_1({\sf CW}^2(G),x)
\end{equation}
(see \cite[Corollary 4.5]{grigor2018fundamental}). 

\begin{corollary}\label{cor:cw}
If $G$ is a finite  undirected connected graph such that $\MH_{2,k}(G,\ZZ)=0$ for $k>2$, then ${\sf CW}^2(G)$ is simply connected. 
\end{corollary}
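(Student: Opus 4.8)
The plan is to deduce Corollary~\ref{cor:cw} directly from the preceding results with essentially no new work. First I would invoke Corollary~\ref{cor:fundamental_diagonal}: the hypothesis $\MH_{2,k}(G,\ZZ)=0$ for $k>2$ gives us exactly that $\pi_1^{\sf GLMY}(G,x)$ is trivial for every vertex $x$. Then I would combine this with the cited identification $\pi_1^{\sf GLMY}(G,x)=\pi_1({\sf CW}^2(G),x)$ from \cite[Corollary~4.5]{grigor2018fundamental}, which immediately yields $\pi_1({\sf CW}^2(G),x)=1$.

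The only remaining point is that simple connectivity requires the space to be path-connected in addition to having trivial fundamental group. This is where the hypothesis that $G$ is connected enters: a connected undirected graph has a connected (indeed path-connected) geometric realization, and attaching 2-cells only along squares and triangles of $G$ does not change the set of path components, so ${\sf CW}^2(G)$ is path-connected. Hence ${\sf CW}^2(G)$ is path-connected with trivial fundamental group, i.e.\ simply connected.

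I do not anticipate a genuine obstacle here: the corollary is a formal consequence of Corollary~\ref{cor:fundamental_diagonal} together with the external input of Grigor'yan--Jimenez--Muranov, and the connectivity bookkeeping is routine. If one wanted to be careful about base points, one would note that for a path-connected space the isomorphism type of $\pi_1$ is independent of the base vertex, so the vanishing for one (equivalently every) vertex $x$ suffices. The proof is therefore just the two-line composition of these facts, and I would write it exactly in that form.
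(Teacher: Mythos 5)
Your proof is correct and is exactly the argument the paper intends: the corollary is stated without its own proof as a direct consequence of Corollary~\ref{cor:fundamental_diagonal} together with the cited identification $\pi_1^{\sf GLMY}(G,x)\cong\pi_1({\sf CW}^2(G),x)$, and the added connectedness hypothesis supplies path-connectedness of ${\sf CW}^2(G)$. Your base-point remark is fine but unnecessary, since the vanishing already holds for every vertex $x$.
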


\subsection{Application: girth}
In order to illustrate our methods, we give an alternative proof of a theorem proved by Asao--Hiraoka--Kanazawa  \cite{asao2024girth}. 
By a cycle in an undirected graph we mean a path $(x_0,\dots,x_n)$ such that $n\geq 3,$ $x_0=x_n$ and all the vertices $x_0,\dots,x_{n-1}$ are distinct. If $e$ is an edge of $G$ we denote by ${\sf gir}_e(G)$ the length of the shortest cycle containing $e.$ If such a cycle does not exist, ${\sf gir}_e(G)=\infty.$ By the definition we have ${\sf gir}_e(G)\geq 3$ for any $G$ and $e.$

\begin{proposition}[{cf. \cite[Th.1.5]{asao2024girth}}] \label{prop:girth} Let $G$ be a finite undirected graph and $e$ be its edge. If ${\sf gir}_e(G)<\infty,$ then $\MH_{2,\ell}(G)\neq 0,$ where $\ell = \lfloor ({\sf gir}_e(G)+1)/2 \rfloor.$ 
\end{proposition}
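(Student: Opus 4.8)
I would deduce this from Corollary~\ref{cor:ranks}, which computes the rank of $\MH_{2,\ell}(G)_{x,y}$ in terms of the sets $\overline{\tt P},\overline{\tt S},\overline{\tt L},\overline{\tt N}$ of (classes of) paths. Set $g={\sf gir}_e(G)$ and $\ell=\lfloor (g+1)/2\rfloor$; write $e=(v,w)$. The idea is to produce an explicit long path $p$ of length $\ell$ connecting two vertices $x,y$ with $d(x,y)<\ell$ and $p$ \emph{minimal} long and \emph{not} $(\ell-1)$-shortly congruent to any non-minimal long path, so that its class survives in $\overline{\tt L}_\ell(x,y)\setminus\overline{\tt N}_\ell(x,y)$, forcing $\MH_{2,\ell}(G)_{x,y}\neq 0$.

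First I would fix a shortest cycle $C=(x_0,\dots,x_g)$ of length $g$ containing $e$, so WLOG $x_0=v$, $x_1=w$ (or $e$ occurs as some edge $(x_i,x_{i+1})$). The key geometric fact about shortest cycles is that they are \emph{isometrically embedded up to the obvious bound}: for any two vertices $x_i,x_j$ on $C$, $d(x_i,x_j)=\min(|i-j|,g-|i-j|)$. Indeed, a strictly shorter path between $x_i$ and $x_j$ would let us splice together a shorter cycle through $e$, contradicting minimality of $g$ — this is the standard girth-versus-isometry argument and I expect it to be the technical heart of the proof. Using this, take $p$ to be the sub-path of $C$ of length $\ell$ starting at $v$: $p=(x_0,x_1,\dots,x_\ell)$. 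Its endpoints are $x_0$ and $x_\ell$, at distance $d(x_0,x_\ell)=\min(\ell,g-\ell)$. Since $\ell=\lfloor(g+1)/2\rfloor$ we have $g-\ell=g-\lfloor(g+1)/2\rfloor=\lceil(g-1)/2\rceil<\ell$ (using $g\geq 3$), so $d(x_0,x_\ell)=g-\ell<\ell$, confirming $p$ is a \emph{long} path of length $\ell$.

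Next I would check that $p$ is a minimal long path, i.e. its two proper sub-paths of length $\ell-1$ are shortest. The sub-path $(x_0,\dots,x_{\ell-1})$ has endpoints at distance $\min(\ell-1,g-\ell+1)$; since $g-\ell+1=\lceil(g+1)/2\rceil\geq \ell-1$ exactly when $\ell\leq\lceil(g+1)/2\rceil$, which holds, this distance equals $\ell-1$, so it is shortest; similarly for $(x_1,\dots,x_\ell)$. (A small case check at the boundary values of $g\bmod 2$ may be needed here, but it is routine.) Finally, because $p$ has length exactly $\ell$ and every long path of length $\ell$ that is $(\ell-1)$-shortly congruent to a \emph{non-minimal} long path would have to contain a long sub-path of length $\leq\ell-1$, I must rule this out for the class $[p]$: any path $(\ell-1)$-shortly congruent to $p$ is obtained by replacing shortest sub-paths of length $\leq\ell-1$; using the isometry of $C$ again, I would argue that every such replacement keeps all length-$(\ell-1)$ sub-paths shortest, so $[p]\in\overline{\tt L}_\ell\setminus\overline{\tt N}_\ell$. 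Then Corollary~\ref{cor:ranks} gives ${\rm rk}_\KK\MH_{2,\ell}(G)_{x_0,x_\ell}\geq 1$, hence $\MH_{2,\ell}(G)\neq 0$. The main obstacle is the last step — showing no non-minimal long path is $(\ell-1)$-shortly congruent to $p$ — and in fact one might instead invoke Theorem~\ref{th:vanishing}/Proposition~\ref{prop:vanishing_fixed_l} in contrapositive form: if $\MH_{2,\ell}(G)=0$ then $p$, being minimal long of length $\ell$, would be $(\ell-1)$-shortly congruent to a non-minimal long path, and I would derive a contradiction with the girth hypothesis by tracking how short moves on $C$ cannot create a long sub-path of length $<\ell$ without violating minimality of $g$.
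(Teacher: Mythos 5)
There are two genuine gaps, one arithmetic and one conceptual. The arithmetic one: for even girth $g$ the path $p=(c_0,\dots,c_\ell)$ along the cycle is \emph{not} long. With $\ell=\lfloor(g+1)/2\rfloor$ one has $g-\ell=\lceil(g-1)/2\rceil$, and this equals $\ell$ when $g$ is even; your inequality $\lceil(g-1)/2\rceil<\ell$ is valid only for odd $g$. So when $g=2\ell$ the endpoints of $p$ are at distance exactly $\ell$ and $p$ is a \emph{shortest} path, so the "minimal long path" plan says nothing. The paper's proof necessarily splits into the cases $g=2\ell$ and $g=2\ell-1$; in the even case it instead exhibits two shortest paths $(c_0,\dots,c_\ell)$ and $(c_g,c_{g-1},\dots,c_\ell)$ from $c_0$ to $c_\ell$ that are not $(\ell-1)$-shortly congruent, and uses the $d(x,y)=\ell$ branch of Corollary~\ref{cor:ranks}.

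The conceptual gap is the step you yourself call "the main obstacle," and your proposed fix would not go through: an $(\ell-1)$-short move replaces a shortest subpath of $p$ by an \emph{arbitrary} shortest path with the same endpoints, which may leave the cycle $c$ entirely, so the isometry of $c$ gives no control over the paths in the congruence class of $p$. The key idea in the paper is a congruence invariant: every path $(\ell-1)$-shortly congruent to $p=(c_0,\dots,c_\ell)$ has second vertex $c_1$. To prove it one notes that if a single $(\ell-1)$-short move changed the second vertex of some $p'$, the replaced subpath would have to be an initial segment of length at most $\ell-1$, so the vertex in position $\ell-1$ is fixed; concatenating the old and new initial segments then yields a closed walk of length $2(\ell-1)<g$ that begins with $e$ and has distinct first and last edges, from which one extracts a simple cycle through $e$ of length $<g$, a contradiction. (The reduction from such "quasi-cycles" to honest cycles through $e$ is a small lemma your isometry claim also tacitly relies on and should not be skipped.) This single invariant handles both parities at once: it separates the two shortest paths in the even case, and in the odd case it combines with distance arithmetic along $c$ to rule out any non-minimal long path in the class of $p$.
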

\begin{proof} Set $g={\sf gir}_e(G).$
In this proof, for a path $q$ we denote by $q_i$ its $i$-th vertex $q=(q_0,\dots,q_n)$ and denote by $q^{-1}$ the opposite path $(q_n,\dots,q_0)$. We will also introduce some terminology that we will use only in this proof. We say that a path $q$ of length $n$ is a quasi-cycle if $q_n=q_0$ and $q_1\neq q_{n-1}.$ It is easy to see that from any quasi-cycle $q$ by deleting some of its vertices we can obtain a cycle $c$ of length $m \leq n$ such that $c_0=q_0$ and $c_1=q_1,c_{m-1}=q_{n-1}.$ We say that a quasi-cycle $q$ begins from the edge $(q_0,q_1).$ Therefore, the condition $g={\sf gir}_e(G)$ implies that any quasi-cycle $q$ that begins from $e$ has length at least $g.$

Denote by $c$ some shortest cycle that begins from $e.$ Consider the path $p:=(c_0,\dots,c_\ell),$ where $\ell=\lfloor (g+1)/2 \rfloor.$

We claim that for any path $p'$  which is $(\ell-1)$-shortly congruent to $p$ we have $p'_1=c_1.$ 
Let us prove this. Assume the contrary: let $p'$ be a path which is $(\ell-1)$-shortly congruent to $p$ and $p'_1\neq c_1.$ Then there exists a path $\tilde p$ which differs from $p'$ by an $(\ell-1)$-short move such that $\tilde p_1=c_1$. Since $p'_1\neq \tilde p_1$ and the paths differ by an $(\ell-1)$-short move, we obtain $p'_{\ell-1}=\tilde p_{\ell-1}.$ Therefore $(\tilde p_0,\dots,\tilde p_{\ell-1}, p'_{\ell-1},\dots,p'_0)$ is a quasi-cycle of length $2(\ell-1)<g$ that begins from $e,$ which is a contradiction. 

Assume that $g=2\ell.$  Then $\ell\geq 2$ and $d(c_0,c_\ell)=\ell$ because otherwise there would be a shorter quasi-cycle beginning by $e.$ Therefore we obtain that there are two shortest paths $p$ and $(c_{g},c_{g-1},\dots, c_\ell)$ which are not $(\ell-1)$-shortly congruent. Then Proposition \ref{prop:vanishing_fixed_l} implies that $\MH_{2,\ell}(G)\neq 0.$ 

Assume that $g=2\ell-1.$ Then $\ell\geq 2$ and  $d(c_0,c_\ell)=d(c_0,c_{\ell-1})=d(c_1,c_\ell)=\ell-1$ because otherwise there would be a shorter cycle containing $e.$ 
Then $p$ is a minimal long path of length $\ell.$ We claim that $p$ is not $(\ell-1)$-shortly congruent to a non-minimal long path. Assume the contrary, that there is a non-minimal long path $p'$ which is $(\ell-1)$-shortly congruent to $p.$  The fact that it is non-minimal implies that either $d(p'_0,p'_{\ell-1})\leq \ell-2$ or $d(p'_1,p'_{\ell})\leq \ell-2.$ As we proved above,  $p'_1=c_1,$ and hence $d(p'_1,p'_{\ell})=d(c_1,c_\ell)=\ell-1.$ This implies that  $d(c_0,p'_{\ell-1})\leq \ell-2$ and $d(c_1,p'_{\ell-1})=\ell-2.$ Take a shortest path $s$ from $c_0$ to $p'_{\ell-1}.$ Since $d(c_1,p'_{\ell-1})=\ell-2,$ we obtain $s_1\neq c_1.$ Therefore $(p'_0,\dots,p'_{\ell-1})s^{-1}$ is a quasi-cycle beginning from $e$ of length $ \ell-1 + d(c_0,p'_{\ell-1})\leq 2\ell-3<g.$ This gives a contradiction. Therefore, $p$ is a minimal long path which is not $(\ell-1)$-shortly congruent to a non-minimal long path. Hence Proposition \ref{prop:vanishing_fixed_l} implies that $\MH_{2,\ell}(G)\neq 0.$
\end{proof}

\section{Path cochain algebra}
In this section we consider the dg-algebra  $\Omega^\bullet(G)$ studied in GLMY-theory \cite[\S 3.4]{grigor2012homologies}, whose elements are called ``$d$-invariant forms''. We call this dg-algebra path cochain algebra. In  \cite{grigor2012homologies} this dg-algebra was considered over a field, but we will consider it over a commutative ring $\KK$, so as to make it more consistent with magnitude homology, which is typically considered over $\ZZ.$
The goal of this section is to prove that $\Omega^\bullet(G)$ is isomorphic to a quotient of the path algebra $\KK G$ by some quadratic relations (similar to the description done of the diagonal part of the magnitude cohomology \cite[Th.6.2]{hepworth2022magnitude}). 

Let us recall the definition of $\Omega^\bullet(G)$ following \cite{grigor2012homologies}. For a finite set $X,$ we consider the free module generated by $(n+1)$-tuples $\KK\cdot X^{n+1}$ and its dual $\Lambda^n(X)=\Hom_\KK(\KK\cdot X^{n+1},\KK).$ Then the basis of $\Lambda^n(X)$ dual to the basis of tuples in $\KK\cdot X^{n+1}$ is denoted by 
\begin{equation}
e^{x_0,\dots,x_n} \in \Lambda^n(X).
\end{equation}
These modules form a cochain complex $\Lambda^\bullet(X)$ with the differential $\partial:\Lambda^n(X)\to \Lambda^{n+1}(X)$ defined by 
\begin{equation}
\partial(e^{x_0,\dots,x_n}) = \sum_{i=0}^{n+1} \sum_{v\in X} (-1)^i e^{x_0,\dots,x_{i-1},v,x_{i},\dots,x_n}
\end{equation}
(see \cite[(2.15)]{grigor2012homologies}). We consider a chain subcomplex $\RR^\bullet(X)\subseteq \Lambda^\bullet(X),$ where $\RR^n(X)$ is generated by the elements of the dual basis $e^{x_0,\dots,x_n}$ such that $x_i\neq x_{i+1}$ for any $i.$ Then there is a structure of a dg-algebra on $\RR^\bullet(X)$ with the product defined by 
\begin{equation}
e^{x_0,\dots,x_n} e^{y_0,\dots,y_m} = 
\begin{cases}
e^{x_0,\dots,x_{n-1},y_0,\dots,y_m}, & y_0=x_n,\\
0, & \text{otherwise.}
\end{cases}
\end{equation}

Denote by $X^{\sf comp}$ the complete digraph on the finite set $X.$ Its arrows are all pairs $(x,y)$ such that $x\neq y.$ Its paths are all tuples  $(x_0,\dots,x_n)$ such that $x_i\neq x_{i+1}.$ We denote by $\KK X^{\sf comp}$ the path algebra of the complete digraph. 

\begin{lemma}\label{lemma:iso:R(X)}
There is an isomorphism of graded algebras 
\begin{equation}\label{eq:iso:R(X)}
\RR^\bullet(X) \cong   \KK X^{\sf comp}, \hspace{1cm} e^{x_0,\dots,x_n} \mapsto (x_0,\dots,x_n)
\end{equation}
which can be treated as an isomorphism of dg-algebras if we define the differential on $\KK X^{\sf comp}$ by the formula
\begin{equation}
\partial (x_0,\dots,x_n) = \sum_{i=0}^{n+1}\  \sum_{x_{i-1}\neq v\neq x_i} (-1)^i (x_0,\dots,x_{i-1},v,x_{i},\dots,x_n),
\end{equation}
where, for each $i,$ $v$ runs over all elements of $X$ such that $v\neq x_i,$ if $i\leq n,$ and $x_{i-1}\neq v$ for $1\leq i.$
\end{lemma}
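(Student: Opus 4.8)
Here is a proof plan for Lemma~\ref{lemma:iso:R(X)}.

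The plan is to establish the three parts of the statement in turn: that $e^{x_0,\dots,x_n}\mapsto(x_0,\dots,x_n)$ is a graded $\KK$-linear isomorphism, that it is an algebra map, and that it intertwines the differential of $\RR^\bullet(X)$ with the displayed operator on $\KK X^{\sf comp}$. For the first two parts I would argue as follows. By construction $\RR^n(X)$ is the free $\KK$-module on the dual-basis vectors $e^{x_0,\dots,x_n}$ with $x_i\neq x_{i+1}$ for every $i$, while the degree-$n$ component of $\KK X^{\sf comp}$ is free on the length-$n$ paths $(x_0,\dots,x_n)$ of the complete digraph, i.e.\ on exactly the same index set of reduced tuples. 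Hence the assignment is a bijection between these two bases, and its $\KK$-linear extension is an isomorphism of graded $\KK$-modules. Multiplicativity is then a direct comparison of the two product rules: the concatenation product $(x_0,\dots,x_n)\cdot(y_0,\dots,y_m)$ equals $(x_0,\dots,x_n,y_1,\dots,y_m)$ when $x_n=y_0$ and $0$ otherwise, which matches $e^{x_0,\dots,x_n}e^{y_0,\dots,y_m}=e^{x_0,\dots,x_{n-1},y_0,\dots,y_m}$ (recalling $y_0=x_n$) and $0$ otherwise. So \eqref{eq:iso:R(X)} is an isomorphism of graded algebras.

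The substantive part is identifying the transported differential. I would compute $\partial$ of $e^{x_0,\dots,x_n}\in\RR^n(X)$ inside $\Lambda^n(X)$, obtaining the double sum over insertion positions $i\in\{0,\dots,n+1\}$ and letters $v\in X$, and then show that the ``bad'' summands --- those in which the inserted $v$ creates a repeated consecutive pair --- cancel in pairs. Concretely, since the original tuple is reduced, a summand is bad precisely when $v=x_{i-1}$ (at a position $i\geq 1$) or $v=x_i$ (at a position $i\leq n$), and never both; inserting $v=x_j$ at position $j$ gives the tuple $(x_0,\dots,x_{j-1},x_j,x_j,x_{j+1},\dots,x_n)$ with sign $(-1)^j$, and inserting the same $v=x_j$ at position $j+1$ gives the identical tuple with sign $(-1)^{j+1}$, so these two summands annihilate each other; letting $j$ run over $\{0,\dots,n\}$ one checks that this pairing is a perfect matching of all the bad summands. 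This simultaneously re-proves that $\partial$ maps $\RR^\bullet(X)$ into itself and shows that on $\RR^\bullet(X)$ the differential is the sum of the surviving terms --- those with $v\neq x_{i-1}$ for $i\geq 1$ and $v\neq x_i$ for $i\leq n$ --- which is exactly the formula claimed for $\partial$ on $\KK X^{\sf comp}$ under the identification. Finally, the Leibniz rule for $\partial$ on $\RR^\bullet(X)$ is inherited from $\Lambda^\bullet(X)$ and is transported along an isomorphism of algebras, so the resulting operator on $\KK X^{\sf comp}$ is again a derivation of degree $1$; hence \eqref{eq:iso:R(X)} upgrades to an isomorphism of dg-algebras.

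The only genuine obstacle is bookkeeping --- keeping the insertion indices and the alternating signs aligned in the cancellation argument for $\partial$, and making sure no bad summand is counted twice in the matching --- and I expect no conceptual difficulty.
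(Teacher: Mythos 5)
Your proposal is correct and is exactly the ``straightforward'' verification the paper gestures at: comparing bases, comparing the two product rules, and computing the cancellation of the degenerate insertion terms in the differential, matched via inserting $x_j$ at positions $j$ and $j+1$. The only small slip is the final remark that the Leibniz rule on $\RR^\bullet(X)$ is ``inherited from $\Lambda^\bullet(X)$'': the paper does not put an algebra structure on $\Lambda^\bullet(X)$, only on the subcomplex $\RR^\bullet(X)$, where the dg-algebra structure is simply asserted. But this does not affect the argument, since the lemma takes the dg-algebra structure on $\RR^\bullet(X)$ as given and only asks you to transport it across the isomorphism, which your parts (1)--(3) accomplish.
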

\begin{proof}
The proof is straightforward. 
\end{proof}

Let $G=(V(G),A(G))$ be a digraph. Unlike in \cite{grigor2012homologies}, by a path in $G$ we mean a tuple of vertices 
$(x_0,\dots,x_n)$ such that $(x_i,x_{i+1})$ is an arrow. 
We set
\begin{equation}
\RR=\RR^\bullet(V(G))    
\end{equation}
and denote by $\cN^n$ a 
submodule $\RR^n$ generated by elements  $e^{x_0,\dots,x_n}$ such that $(x_0,\dots,x_n)$ is not 
a path of $G$. It is easy to see that $\cN$ is a homogeneous 
ideal of $\RR$ but it is not necessarily closed with respect to the differential.   
\begin{lemma}\label{lemma:iso:R/N}
The isomorphism \eqref{eq:iso:R(X)} induces an  isomorphism of graded algebras
\begin{equation}
    \RR/\cN \cong \KK G. 
\end{equation}
\end{lemma}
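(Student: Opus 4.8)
The plan is to upgrade the isomorphism \eqref{eq:iso:R(X)}, applied with $X=V(G)$, to the quotients: I would build a homomorphism of graded algebras $\varphi\colon\RR\to\KK G$, show it is surjective with kernel exactly $\cN$, and conclude that it descends to an isomorphism $\RR/\cN\cong\KK G$. First I would note that, since a digraph has no loops, any tuple $(x_0,\dots,x_n)$ all of whose consecutive pairs $(x_i,x_{i+1})$ are arrows of $G$ automatically satisfies $x_i\neq x_{i+1}$, so the paths of $G$ form a subset of the paths of the complete digraph $V(G)^{\sf comp}$. Transported along \eqref{eq:iso:R(X)}, this means that the free $\KK$-module $\RR^n$ splits as the internal direct sum of the span of the $e^{x_0,\dots,x_n}$ with $(x_0,\dots,x_n)$ a path of $G$ and the span of those for which it is not; the latter is $\cN^n$ by definition. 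Hence the $\KK$-linear map $\varphi$ sending $e^{x_0,\dots,x_n}$ to the tuple $(x_0,\dots,x_n)$ when it is a path of $G$ and to $0$ otherwise is a homogeneous surjection with $\Ker\varphi=\cN$.

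The substantive step is to check that $\varphi$ respects products, which I would verify on basis elements $u=e^{x_0,\dots,x_n}$ and $v=e^{y_0,\dots,y_m}$. By Lemma \ref{lemma:iso:R(X)}, $uv$ equals the concatenated basis element $e^{x_0,\dots,x_{n-1},y_0,\dots,y_m}$ when $x_n=y_0$ and $0$ otherwise, which is exactly the concatenation product of $\KK V(G)^{\sf comp}$. When $x_n\neq y_0$ both $\varphi(uv)$ and $\varphi(u)\varphi(v)$ are $0$. When $x_n=y_0$, the set of consecutive pairs of the concatenated tuple is the union of the sets of consecutive pairs of $(x_0,\dots,x_n)$ and of $(y_0,\dots,y_m)$, so the concatenation is a path of $G$ if and only if both factors are; thus $\varphi(uv)$ is the concatenated $G$-path precisely when $\varphi(u)$ and $\varphi(v)$ are both nonzero, which is their product in $\KK G$, and is $0$ otherwise. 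In all cases $\varphi(uv)=\varphi(u)\varphi(v)$.

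Since $\cN$ is a homogeneous ideal of $\RR$, the homogeneous algebra surjection $\varphi$ with kernel $\cN$ induces the claimed isomorphism of graded algebras $\RR/\cN\cong\KK G$. I do not expect a genuine difficulty here; the only point requiring a little care is the bookkeeping of consecutive pairs under concatenation, together with the no-loops remark that makes the $G$-paths a subset of the canonical basis of $\RR$ (so that $\cN$ is a $\KK$-module direct summand) --- this is essentially the computation behind the ``straightforward'' proof of Lemma \ref{lemma:iso:R(X)}.
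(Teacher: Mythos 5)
Your proof is correct and takes essentially the same route as the paper: the key point in both is the $\KK$-module splitting $\RR^n=\cN^n\oplus(\text{span of $G$-path basis elements})$, which the paper uses to identify a basis of $\RR^n/\cN^n$ and which you use to define the retraction $\varphi\colon\RR\to\KK G$. Your explicit check that $\varphi$ is multiplicative, and the remark that absence of loops makes the $G$-paths a subset of the $V(G)^{\sf comp}$-paths, merely flesh out what the paper's one-line proof leaves implicit.
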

\begin{proof}
It follows immediately from the fact that the images of the elements  $e^{x_0,\dots,x_n},$ where $(x_0,\dots,x_n)$ is a path, form a basis of $\RR^n/\cN^n.$ 
\end{proof}

Consider the dg-ideal $\mathcal J$ of $\RR$ defined by
$\mathcal J^n = \cN^n + \partial(\cN^{n-1})$ and take the quotient dg-algebra
\begin{equation}
\Omega^\bullet(G) = \RR/\mathcal J, 
\end{equation}
which we call the algebra of path cochains. Then the path cohomology is defined by ${\rm PH}^{n}(G)=H^n(\Omega^\bullet(G)).$

\begin{lemma}\label{lemma:generators_of_J}
The ideal $\mathcal J/\cN$ of $\RR/\cN$ is generated by the elements $\tilde t_{x,y},$ which are indexed by pairs of vertices $(x,y)$ at a distance $d(x,y)=2,$ and defined by the formula 
\begin{equation}
\tilde t_{x,y}:=\sum_{v} e^{x,v,y} + \cN,     
\end{equation}
where the sum runs over all vertices $v$ such that $(x,v,y)$ is a path.
\end{lemma}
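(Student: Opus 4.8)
The plan is to work inside $\RR/\cN \cong \KK G$ and identify $\mathcal J/\cN$ with the ideal generated by the indicated elements. Recall that $\mathcal J^n = \cN^n + \partial(\cN^{n-1})$, so $\mathcal J/\cN$ is exactly the image in $\RR/\cN$ of the submodules $\partial(\cN^{n-1})$. Thus the first step is to understand, for each $n$, the image of $\partial(\cN^{n-1})$ in $\RR^n/\cN^n \cong (\KK G)_n$, i.e.\ to take an element $e^{x_0,\dots,x_{n-1}}$ which is \emph{not} a path of $G$, apply $\partial$, and discard those summands of $\partial(e^{x_0,\dots,x_{n-1}})$ that are again non-paths. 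Concretely, if $(x_0,\dots,x_{n-1})$ fails to be a path precisely because some consecutive pair $(x_j,x_{j+1})$ is not an arrow (while perhaps other pairs also fail), then in $\partial(e^{x_0,\dots,x_{n-1}})$ the only surviving terms modulo $\cN$ are those $e^{x_0,\dots,v,\dots,x_{n-1}}$ obtained by inserting a vertex $v$ \emph{between} a failing pair $(x_j,x_{j+1})$ so that both $(x_j,v)$ and $(v,x_{j+1})$ become arrows; and this can only contribute a path if \emph{all other} consecutive pairs were already arrows. Hence the non-trivial contributions come from tuples $(x_0,\dots,x_{n-1})$ with exactly one non-arrow pair $(x_j,x_{j+1})$ such that nevertheless $d(x_j,x_{j+1})=2$, and the resulting element of $(\KK G)_n$ is $(x_0,\dots,x_j)\cdot\bigl(\sum_v (x_j,v,x_{j+1})\bigr)\cdot(x_{j+1},\dots,x_{n-1})$, i.e.\ $p\,\tilde t_{x_j,x_{j+1}}\,p'$ (up to the sign $(-1)^{j+1}$) for paths $p,p'$.

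The second step is to turn this observation into the two inclusions. For $\mathcal J/\cN \supseteq (\tilde t_{x,y})$: the element $\tilde t_{x,y}$ is itself the image of $\partial(e^{x,y})$ modulo $\cN$, since $(x,y)$ is a non-arrow (as $d(x,y)=2$) and the surviving terms of $\partial(e^{x,y})$ are exactly the $e^{x,v,y}$ with $(x,v,y)$ a path. And $\mathcal J/\cN$ is an ideal of $\RR/\cN$, so it contains $p\,\tilde t_{x,y}\,p'$ for all paths $p,p'$; hence it contains the two-sided ideal generated by the $\tilde t_{x,y}$. For the reverse inclusion $\mathcal J/\cN \subseteq (\tilde t_{x,y})$: by the first step, $\mathcal J/\cN$ is spanned over $\KK$ by elements of the form $p\,\tilde t_{x_j,x_{j+1}}\,p'$ arising from tuples with a single non-arrow pair at $G$-distance $2$. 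One should also check the edge cases where the non-arrow pair is a boundary pair, i.e.\ $j=0$ or $j=n-2$, which just means $p$ or $p'$ is trivial; and confirm that a tuple with \emph{two or more} non-arrow pairs contributes nothing modulo $\cN$, because every insertion of a single vertex can repair at most one such pair, so $\partial(e^{x_0,\dots,x_{n-1}})$ lands entirely in $\cN^n$. Finally one must rule out the case where the unique non-arrow pair $(x_j,x_{j+1})$ has $d(x_j,x_{j+1})>2$: then no single-vertex insertion repairs it, so again the contribution is zero. This pins down $\mathcal J/\cN$ as exactly the ideal generated by the $\tilde t_{x,y}$ with $d(x,y)=2$.

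I expect the main obstacle to be the careful bookkeeping in the first step: one must be precise about which summands of $\partial(e^{x_0,\dots,x_{n-1}})$ survive modulo $\cN$, keeping track of the requirement that \emph{all} consecutive pairs of the inserted tuple be arrows, and correctly handling the two terms of $\partial$ that insert a vertex at the very beginning or very end (which never help, since they do not touch any interior non-arrow pair). Everything else — that $\tilde t_{x,y}\in\mathcal J/\cN$, that $\mathcal J/\cN$ is an ideal, that $p\,\tilde t_{x,y}\,p'$ exhausts the generators — is routine once this combinatorial description of $\partial\cN \bmod \cN$ is in hand. I would also remark, for use later, that a summand of $\partial(e^{x_0,\dots,x_{n-1}})$ may happen to be a non-path for more than one reason, but this does not affect the argument since we only need that the surviving set is exactly the claimed one.
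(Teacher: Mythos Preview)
Your proposal is correct and follows essentially the same approach as the paper's proof. The paper packages your case analysis into the terminology of an \emph{almost path} (a tuple with exactly one non-arrow consecutive pair, and that pair at distance $2$), but the content is identical: non-paths that are not almost paths have $\partial(e^{x_0,\dots,x_n})\in\cN$, while almost paths contribute exactly $e^{x_0,\dots,x_{k-1}}\tilde t_{x_k,x_{k+1}}e^{x_{k+2},\dots,x_n}$ modulo $\cN$.
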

\begin{proof}
By the definition $\mathcal J/\cN$ is generated by the elements of the form $\partial(e^{x_0,\dots,x_n})+\cN,$ where $(x_0,\dots,x_n)$ is not a path. If $d(x,y)=2,$ then 
\begin{equation}
  \partial(e^{x,y}) = \sum_{v\in V(G)} (e^{v,x,y} - e^{x,v,y} + e^{x,y,v}).  
\end{equation}
Since $(v,x,y)$ and $(x,y,v)$ are not paths, we obtain  $e^{v,x,y},e^{x,y,v}\in \mathcal N$ and  $\partial(e^{x,y}) + \mathcal N = \tilde t_{x,y}.$ Hence, $\tilde t_{x,y}\in \mathcal J/\cN.$ 

Now let us prove that $\mathcal J/\cN$ is generated by $\tilde t_{x,y}.$ In this proof we say that a tuple $(x_0,\dots,x_n)$ is an \textit{almost path} if there is $0\leq k<n$ such that $d(x_k,x_{k+1})=2$ and $d(x_j,x_{j+1})=1$ for $j\neq k.$ It is easy to check that, if $(x_0,\dots,x_n)$ is not a path and not an almost path, then $\partial(e^{x_0,\dots,x_n})\in \mathcal N.$ If $(x_0,\dots,x_n)$ is an almost path, then
\begin{equation}
\partial(e^{x_0,\dots,x_n})+\mathcal N 
= 
e^{x_0,\dots,x_{k-1}} \tilde t_{x_k,x_{k+1}} e^{x_{k+2},\dots,x_n}. \end{equation}
The result follows. 
\end{proof}

\begin{theorem}\label{theorem:omega} Let $G$ be a finite digraph and $\KK$ be a commutative ring. Then
there is an isomorphism of dg-algebras 
\begin{equation}
\Omega^\bullet(G) \cong \KK G/T,
\end{equation}
sending $e^{x_0,\dots,x_n}+\mathcal{J}$ to $(x_0,\dots,x_n)+T,$ where $T$ is the ideal generated by quadratic relations $t_{x,y}$ which are indexed by pairs of vertices $x,y$ at a distance $d(x,y)=2,$ and defined by the formula
\begin{equation}\label{eq:t}
t_{x,y}= \sum_{v} (x,v,y), 
\end{equation}
where the sum runs over all vertices $v$ such that $(x,v,y)$ is a path. 
The differential on $\KK G/T$ is defined on a path $p$ by the formula  
\begin{equation}
\partial (p) = \sum_{i=0}^{n+1}\  \sum_{v} (-1)^i (p_0,\dots,p_{i-1},v,p_{i},\dots,p_n),
\end{equation}
where the second sum runs over all vertices $v$ such that $(p_0,\dots,p_{i-1},v,p_{i},\dots,p_n)$ is a path in $\KK G.$
\end{theorem}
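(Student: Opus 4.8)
The plan is to assemble the three preceding lemmas. Since $\cN \subseteq \mathcal{J}$, the quotient defining the path cochain algebra factors as
\begin{equation}
\Omega^\bullet(G) = \RR/\mathcal{J} = (\RR/\cN)/(\mathcal{J}/\cN),
\end{equation}
so it suffices to understand $\RR/\cN$ and the image of $\mathcal{J}$ in it. By Lemma~\ref{lemma:iso:R/N} the graded algebra $\RR/\cN$ is identified with $\KK G$ through $e^{x_0,\dots,x_n} + \cN \mapsto (x_0,\dots,x_n)$. Under this identification, Lemma~\ref{lemma:generators_of_J} tells us that the ideal $\mathcal{J}/\cN$ is generated by the elements $\tilde t_{x,y} = \sum_v e^{x,v,y} + \cN$, indexed by pairs $x,y$ with $d(x,y) = 2$, and these are sent precisely to $t_{x,y} = \sum_v (x,v,y)$. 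Hence the image of $\mathcal{J}/\cN$ in $\KK G$ is exactly the ideal $T$, and we obtain an isomorphism of graded algebras $\Omega^\bullet(G) \cong \KK G/T$ with the stated effect $e^{x_0,\dots,x_n} + \mathcal{J} \mapsto (x_0,\dots,x_n) + T$ on generators.

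It remains to identify the induced differential. Since $\mathcal{J}$ is a dg-ideal of $\RR$ (as already used in the definition of $\Omega^\bullet(G)$), the quotient $\RR/\mathcal{J}$ carries an induced differential, and the isomorphism above is automatically one of dg-algebras once $\KK G/T$ is equipped with the transported differential. To write this differential down, I would take a path $p = (p_0,\dots,p_n)$ of $G$, lift it to $e^{p_0,\dots,p_n} \in \RR$, and apply the differential of $\RR$; by Lemma~\ref{lemma:iso:R(X)} this equals
\begin{equation}
\sum_{i=0}^{n+1}\ \sum_{p_{i-1} \neq v \neq p_i} (-1)^i\, e^{p_0,\dots,p_{i-1},v,p_i,\dots,p_n}.
\end{equation}
The projection $\RR \epi \RR/\cN \cong \KK G$ sends $e^{y_0,\dots,y_{n+1}}$ to $(y_0,\dots,y_{n+1})$ when the latter is a path of $G$ and to $0$ otherwise; applying it and then projecting mod $T$ leaves exactly $\sum_i \sum_v (-1)^i (p_0,\dots,p_{i-1},v,p_i,\dots,p_n) + T$, with $v$ running over the vertices for which the displayed tuple is a path in $\KK G$. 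This is the formula in the statement.

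The only point that looks at all delicate is that the ``insert a vertex'' operation does not define a differential on $\KK G$ itself — equivalently, $\cN$ is not closed under the differential of $\RR$ — so the displayed formula is meaningful only after passing to the quotient by $T$. But this well-definedness does not need to be checked by hand: it is inherited from the genuine dg-structure on $\RR/\mathcal{J}$ together with the facts, recorded before the theorem, that $\mathcal{J}^n = \cN^n + \partial(\cN^{n-1})$ is a two-sided ideal closed under $\partial$. Consequently the substance of the argument is purely the identification $\mathcal{J}/\cN \leftrightarrow T$, which is immediate from Lemmas~\ref{lemma:iso:R/N} and~\ref{lemma:generators_of_J}, and I do not expect a genuine obstacle here.
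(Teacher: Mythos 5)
Your proof is correct and follows exactly the route the paper intends: the paper's proof of Theorem~\ref{theorem:omega} is the one-line remark that it ``follows from Lemmas~\ref{lemma:iso:R(X)}, \ref{lemma:iso:R/N} and~\ref{lemma:generators_of_J},'' and you have spelled out precisely how those three lemmas assemble, including the correct justification (via the dg-ideal $\mathcal{J}$) for why the insertion formula descends to a well-defined differential on $\KK G/T$ even though $\cN$ is not $\partial$-closed.
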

\begin{proof}
It follows from Lemmas \ref{lemma:iso:R(X)},  \ref{lemma:iso:R/N} and  \ref{lemma:generators_of_J}. 
\end{proof}

Using this Theorem we can obtain the dual result to the result of Asao \cite[Lemma 6.8]{asao2023magnitude}. 

\begin{corollary}\label{cor:diagonal_magnitude_and_path} For any commutative ring $\KK$ and any finite digraph $G,$ there is an isomorphism of graded algebras of the path cochain algebra and the diagonal part of the magnitude cohomology algebra
\begin{equation}
\Omega^\bullet(G)\cong \MH^{\sf diag}(G)
\end{equation}
\end{corollary}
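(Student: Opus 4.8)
The plan is to identify both sides with the quotient algebra $\KK G/T$ of Theorem~\ref{theorem:omega}, where $T$ is the two‑sided ideal generated by the quadratic elements $t_{x,y}=\sum_{v}(x,v,y)$ indexed by pairs of vertices with $d(x,y)=2$. Since Theorem~\ref{theorem:omega} already gives $\Omega^\bullet(G)\cong\KK G/T$, it suffices to establish the matching presentation $\MH^{\sf diag}(G)\cong\KK G/T$ as graded algebras; this is in substance \cite[Th.~6.2]{hepworth2022magnitude} (the cohomological counterpart of \cite[Lemma~6.8]{asao2023magnitude}), but I will indicate a direct argument.

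First I would describe the diagonal part of the magnitude cochain algebra. A tuple $(x_0,\dots,x_\ell)$ with $x_i\neq x_{i+1}$ has norm $\ell$ exactly when $d(x_i,x_{i+1})=1$ for all $i$, that is, exactly when it is a path of $G$ of length $\ell$; hence $\MC_{\ell,\ell}(G)=(\KK G)_\ell$ and $\MC^{\ell,\ell}(G)=\Hom_\KK((\KK G)_\ell,\KK)$ carries the dual basis $\{e^p : p\text{ a path of length }\ell\}$. On the diagonal the cup product on magnitude cochains satisfies $e^p\cup e^q=e^{pq}$ when the concatenation $pq$ is defined and $e^p\cup e^q=0$ otherwise, so $e^p\mapsto p$ is an isomorphism of graded algebras $\bigoplus_\ell\MC^{\ell,\ell}(G)\cong\KK G$. (This is the restriction to paths of $G$ of the isomorphism $\RR/\cN\cong\KK G$ of Lemmas~\ref{lemma:iso:R(X)} and~\ref{lemma:iso:R/N} with $X=V(G)$; the differential used below is, however, the magnitude coboundary, not the one in those lemmas.)

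Next I would compute the diagonal cohomology. A tuple $(x_0,\dots,x_{\ell+1})$ with distinct consecutive entries has norm at least $\ell+1$, so $\MC_{\ell+1,\ell}(G)=0$ and the magnitude coboundary $\delta$ vanishes on $\MC^{\ell,\ell}(G)$; therefore $\MH^{\ell,\ell}(G)=\MC^{\ell,\ell}(G)/\im\bigl(\delta\colon\MC^{\ell-1,\ell}(G)\to\MC^{\ell,\ell}(G)\bigr)$. The module $\MC_{\ell-1,\ell}(G)$ has for basis the ``almost paths'' $(q_0,\dots,q_{\ell-1})$ in which a single consecutive pair $(q_k,q_{k+1})$ has $d(q_k,q_{k+1})=2$ and all other consecutive distances equal $1$, and dualizing the face maps $\partial_{\ell,i}$ one finds, for such $q$, that $\delta(e^q)=(-1)^{k+1}\,e^{(q_0,\dots,q_k)}\,t_{q_k,q_{k+1}}\,e^{(q_{k+1},\dots,q_{\ell-1})}$ under the identification of the previous paragraph (a computation analogous to that in the proof of Lemma~\ref{lemma:generators_of_J}). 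As $q$ ranges over all such almost paths these elements span precisely the degree‑$\ell$ part of $T$, so $\im\delta$ corresponds to $T$ and $\MH^{\sf diag}(G)\cong\KK G/T$ as graded algebras. Together with Theorem~\ref{theorem:omega} this gives the corollary.

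The point that deserves care is that the isomorphism is multiplicative and not merely graded: one needs $\bigoplus_\ell\im\bigl(\delta\colon\MC^{\ell-1,\ell}(G)\to\MC^{\ell,\ell}(G)\bigr)$ to be a two‑sided ideal of $\bigoplus_\ell\MC^{\ell,\ell}(G)\cong\KK G$, with the induced quotient product equal to that of $\KK G/T$. In the description above this is immediate, since $T$ is two‑sided by construction; conceptually it follows from the magnitude cochain complex being a dg‑algebra together with the vanishing of $\delta$ on diagonal cochains, which forces $\delta(\alpha)\cup\beta=\delta(\alpha\cup\beta)$ whenever $\beta$ is diagonal. I do not expect a genuine obstacle here — the only real work is the routine bookkeeping of signs and indices in dualizing the face maps of $\MC_{\bullet,\ell}(G)$ and in recognizing the resulting relations as the translates of the $t_{x,y}$.
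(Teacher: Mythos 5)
Your proposal is correct and follows the same overall plan as the paper: identify $\Omega^\bullet(G)$ with $\KK G/T$ via Theorem~\ref{theorem:omega} and identify $\MH^{\sf diag}(G)$ with the same quotient via Hepworth's \cite[Th.~6.2]{hepworth2022magnitude}. The only difference is that the paper simply cites Hepworth's theorem (noting it extends from $\ZZ$ to any commutative ring), whereas you supply a self-contained derivation of that identification by computing $\im(\delta\colon\MC^{\ell-1,\ell}\to\MC^{\ell,\ell})$ from the almost-path basis; your computation is accurate.
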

\begin{proof}
It follows from Theorem \ref{theorem:omega} and \cite[Th.6.2]{hepworth2022magnitude} (the proof of Theorem \cite[Th.6.2]{hepworth2022magnitude} is written only in the case $\KK=\ZZ$ but it can be generalized to any commutative ring without any changes).
\end{proof}

\section{Diagonal digraphs and Koszul algebras}

In this section we show that a finite  digraph $G$ is diagonal if and only if the distance algebra $\sigma G$ is Koszul for any field, and if and only if $G$ satisfies $(\mathcal{V}_2)$ and the path cochain algebra $\Omega^\bullet(G)$ is Koszul for any field. Moreover, for a digraph $G$ satisfying $(\mathcal{V}_2)$, we give an explicit description of the Koszul complex of the quadratic algebra $\Omega^\bullet(G)$  (Proposition~\ref{prop:koszul_complex}). This provides another equivalent description of diagonal digraphs from a completely different viewpoint (Corollary  \ref{cor:diagonality:koszul_complex}).

\subsection{Background on quadratic and Koszul algebras} 

Here we will recall the basic information about quadratic and Koszul algebras in the setting of graded quiver algebras over fields \cite{martinez2007introduction}, \cite{beilinson1996koszul}.

Let $A=\bigoplus_{n\geq 0} A_n$ be a non-negatively graded ring and $S=A_0$. If $M,N$ are graded $A$-modules we denote by $\Ext^{n,\ell}_A(M,N)$ the bigraded Ext-functor in the category of graded modules. If $M,N$ are ungraded $A$-modules, we set $\Ext^n_A(M,N)$ the ordinary Ext functor. The graded ring $A$ is called Koszul if $S$ is a semisimple ring and there exists a graded projective resolution of $S$ over $A$
\begin{equation}
  \dots  \to  P_1 \to P_0 \to  S \to 0
\end{equation}
such that $P_n$ is a graded projective $A$-module generated in degree $n.$ It is known \cite[Prop.2.1.3]{beilinson1996koszul} that 
\begin{equation}\label{eq:Koszul}
A \text{ is Koszul} \hspace{5mm}    \Leftrightarrow \hspace{5mm} \Ext^{n,\ell}_A(S,S)=0 \text{ for } \ell\neq n.
\end{equation}
We denote by 
\begin{equation}
E(A) = \Ext^*_A(S,S)
\end{equation}
the Ext algebra, where the product is given by the Yoneda product. We will restrict ourselves to graded quiver algebras over a field.

Let $\KK$ be a field, $Q$ be a finite quiver, and $\KK Q=\bigoplus_{n\geq 0} (\KK Q)_n$ be the graded path algebra, where the grading is defined by the lengths of paths. For a homogeneous ideal $I$ of $\KK Q$ we denote by $I_n$ its homogeneous components. Then $I$ is called admissible if $I_0=I_1=0.$ By a \emph{graded quiver algebra} we mean an algebra of the form $A=\KK Q/I,$ where $I$ is a graded admissible ideal. 

A graded quiver algebra $A=\KK Q/I$ is called \emph{quadratic} if $I$ is generated by $I_2.$ It is known \cite[Theorem 2.3.2]{beilinson1996koszul}, for a graded quiver algebra $A,$ we have
\begin{equation}\label{eq:quadratic}
A \text{ is quadratic} \hspace{5mm}    \Leftrightarrow \hspace{5mm} \Ext^{2,\ell}_A(S,S)=0 \text{ for } \ell\neq 2.
\end{equation}

Assume that $A=\KK Q/I$ is a quadratic algebra. Denote by $Q^{op}$ the quiver opposite to $Q$ and, for any path $p$ in $Q,$ denote by  $p^{op}$ the opposite path in $Q^{op}.$ Then there is a non-degenerate bilinear form 
\begin{equation}
\langle -,=  \rangle:(\KK Q)_2 \times (\KK Q^{op})_2 \to \KK    
\end{equation}
which is defined on paths via Kronecker delta $\langle p,q \rangle=\delta_{p^{op},q}.$  For a quadratic algebra $A=\KK Q/I$ we  
denote by $I_2^{\bot}$ the vector subspace of $(\KK Q^{op})_2$ orthogonal to $I_2$ with respect to this bilinear form. We also denote by $I^\bot$ the ideal of $\KK Q^{op}$ generated by $I_2^\bot.$ Then the dual quadratic algebra to $A$ is defined by 
\begin{equation}
A^! = (\KK Q^{op})/I^\bot.
\end{equation}

If a graded quiver algebra is Koszul, it is quadratic, and a quadratic algebra $A$ is Koszul if and only if $A^!$ is Koszul. Moreover, for any Koszul graded quiver algebra $A$ there is an isomorphism
\begin{equation}
E(A)\cong A^!.
\end{equation}

\subsection{Diagonal digraphs}

\begin{lemma}\label{lemma:diagonal_free}
For any digraph $G$ and any $n$ the group $\MH_{n,n}(G,\ZZ)$ is free abelian. 
\end{lemma}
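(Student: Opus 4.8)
The plan is to exploit the fact that the diagonal group $\MH_{n,n}(G,\ZZ)$ sits in the top homological degree of its magnitude chain complex. First I would observe that $\MC_{k,\ell}(G)=0$ whenever $k>\ell$: for a tuple $(x_0,\dots,x_k)$ with $x_i\neq x_{i+1}$ one has $d(x_i,x_{i+1})\geq 1$ for every $i$, so its norm is at least $k$. In particular $\MC_{n+1,n}(G)=0$, so in the complex $\MC_{\bullet,n}(G)$ there is no differential entering homological degree $n$, and hence
\[
\MH_{n,n}(G,\ZZ)\;=\;\ker\bigl(\partial_n\colon\MC_{n,n}(G,\ZZ)\longrightarrow\MC_{n-1,n}(G,\ZZ)\bigr).
\]

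Second, I would recall that $\MC_{n,n}(G,\ZZ)$ is, by construction, the free abelian group on the set of norm-$n$ tuples $(x_0,\dots,x_n)$ with $x_i\neq x_{i+1}$; these are precisely the tuples all of whose consecutive pairs are arrows of $G$. Since $\MH_{n,n}(G,\ZZ)$ is identified above with a subgroup of this free abelian group, the result follows from the standard fact that every subgroup of a free abelian group is again free abelian.

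There is essentially no obstacle here: the whole content is the vanishing $\MC_{n+1,n}(G)=0$, after which the statement is immediate from the structure theory of modules over the PID $\ZZ$ (submodules of free modules are free, with no finiteness hypothesis on $G$ required). The same argument shows more generally that $\MH_{n,n}(G,R)$ is a free $R$-module for any principal ideal domain $R$, and that over an arbitrary commutative ring $\KK$ the module $\MH_{n,n}(G,\KK)$ is a submodule of the free $\KK$-module $\MC_{n,n}(G,\KK)$.
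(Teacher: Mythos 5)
Your proof is correct and takes essentially the same approach as the paper: both exploit the vanishing $\MC_{n+1,n}(G,\ZZ)=0$ to realize $\MH_{n,n}(G,\ZZ)$ as a subgroup of the free abelian group $\MC_{n,n}(G,\ZZ)$, and then invoke the fact that subgroups of free abelian groups are free. Your version simply spells out the justification for the vanishing (the norm bound) and notes the evident generalization to PIDs.
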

\begin{proof}
Since $\MC_{n+1,n}(G,\ZZ)=0,$ we have $\MH_{n,n}(G,\ZZ)\subseteq \MC_{n,n}(G,\ZZ),$ and we use the fact that a subgroup of a free abelian group is free abelian. 
\end{proof}

\begin{lemma}\label{lemma:universal_coefficient} For any digraph $G$ and any $n,\ell$ there are short exact sequences
\begin{equation}\label{eq:universal_coefficient}
  \MH_{n,\ell}(G,\ZZ)\otimes_\ZZ \KK \mono  
  \MH_{n,\ell}(G,\KK) 
  \epi \Tor^\ZZ_1(\MH_{n-1,\ell}(G,\ZZ),\KK), 
\end{equation} 
\begin{equation}
    {\rm Ext}_\ZZ^1(\MH_{n-1,\ell}(G,\ZZ),\KK ) \mono \MH^{n,\ell}(G,\KK) \epi \Hom_\ZZ( \MH_{n,\ell}(G,\ZZ),\KK).
\end{equation}    
\end{lemma}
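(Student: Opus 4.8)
The statement to prove is Lemma~\ref{lemma:universal_coefficient}, the universal coefficient theorem for magnitude (co)homology.

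The plan is to apply the classical algebraic universal coefficient theorem for chain complexes to the magnitude chain complex $\MC_{\bullet,\ell}(G,\ZZ)$. First I would recall that for each fixed $\ell$ the complex $\MC_{\bullet,\ell}(G,\ZZ)$ is a chain complex of finitely generated free abelian groups, since by definition $\MC_{n,\ell}(G,\ZZ)$ is the free $\ZZ$-module on the finite set of tuples $(x_0,\dots,x_n)$ with $|x_0,\dots,x_n|=\ell$ and $x_i\neq x_{i+1}$. Moreover this complex is bounded (it vanishes for $n>\ell$ and for $n<0$). Consequently the cycles $Z_n$ and boundaries $B_{n-1}$ are free abelian groups as subgroups of free abelian groups, so every short exact sequence $0\to Z_n\to \MC_{n,\ell}\to B_{n-1}\to 0$ splits, and the hypotheses of the algebraic UCT are met.

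Second, I would observe the base-change compatibility: $\MC_{\bullet,\ell}(G,\KK)\cong \MC_{\bullet,\ell}(G,\ZZ)\otimes_\ZZ\KK$ as chain complexes of $\KK$-modules, directly from the definition of the magnitude chain complex with coefficients (the generating set of tuples and the face maps are independent of the ground ring). Likewise $\MC^{\bullet,\ell}(G,\KK)=\Hom_\KK(\MC_{\bullet,\ell}(G,\KK),\KK)\cong \Hom_\ZZ(\MC_{\bullet,\ell}(G,\ZZ),\KK)$, again because the complex is levelwise free and finitely generated over $\ZZ$. With these identifications in hand, the homological UCT gives the natural short exact sequence
\begin{equation*}
0\to H_n(\MC_{\bullet,\ell}(G,\ZZ))\otimes_\ZZ\KK \to H_n(\MC_{\bullet,\ell}(G,\ZZ)\otimes_\ZZ\KK)\to \Tor_1^\ZZ(H_{n-1}(\MC_{\bullet,\ell}(G,\ZZ)),\KK)\to 0,
\end{equation*}
which is precisely \eqref{eq:universal_coefficient} after substituting $\MH_{n,\ell}(G,-)=H_n(\MC_{\bullet,\ell}(G,-))$; and the cohomological UCT gives
\begin{equation*}
0\to \Ext^1_\ZZ(H_{n-1}(\MC_{\bullet,\ell}(G,\ZZ)),\KK)\to H^n(\Hom_\ZZ(\MC_{\bullet,\ell}(G,\ZZ),\KK))\to \Hom_\ZZ(H_n(\MC_{\bullet,\ell}(G,\ZZ)),\KK)\to 0,
\end{equation*}
which is the second displayed sequence of the lemma.

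There is essentially no hard part here; the only thing to be careful about is the finite-generation/freeness bookkeeping that licenses both the splitting needed for the UCT and the identification of the cochain complex with the $\ZZ$-linear dual. I would therefore phrase the proof as: note $\MC_{\bullet,\ell}(G,\ZZ)$ is a bounded complex of finitely generated free abelian groups; note the two base-change identifications above; then quote the algebraic universal coefficient theorems for homology and cohomology (e.g.\ \cite[\S 3.A]{hatcher} or any standard homological algebra reference). One could also remark that, in view of Lemma~\ref{lemma:diagonal_free}, in the diagonal degrees $n=\ell$ the $\Tor$ and $\Ext$ terms involving $\MH_{n,n}$ vanish, though that is not needed for the statement itself.
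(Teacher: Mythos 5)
Your proof matches the paper's argument exactly: both identify $\MC_{\bullet,\ell}(G,\KK)$ with $\MC_{\bullet,\ell}(G,\ZZ)\otimes_\ZZ\KK$ and $\MC^{\bullet,\ell}(G,\KK)$ with $\Hom_\ZZ(\MC_{\bullet,\ell}(G,\ZZ),\KK)$, observe that the integral magnitude chain complex has free abelian components, and then invoke the algebraic universal coefficient theorems for homology and cohomology (the paper cites Weibel, Th.~3.6.1 and Th.~3.6.5). The only minor quibble is that finite generation is not actually needed for the cochain identification (the tensor-hom adjunction $\Hom_\KK(C\otimes_\ZZ\KK,\KK)\cong\Hom_\ZZ(C,\KK)$ holds without it), and the lemma as stated applies to any digraph, not just finite ones.
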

\begin{proof}
It follows from the universal coefficient theorem for chain complexes \cite[Th.3.6.1, Th.3.6.5]{weibel1994introduction},  the isomorphisms 
\begin{equation}
 \MC_{\bullet,\ell}(G,\KK)=\MC_{\bullet,\ell}(G,\ZZ)\otimes_\ZZ \KK, \hspace{5mm} \MC^{\bullet,\ell}(G,\KK)\cong \Hom_\ZZ(\MC_{\bullet,\ell}(G,\ZZ),\KK) 
\end{equation}
 and the fact that the components of $\MC_{\bullet,\ell}(G,\ZZ)$ are free abelian groups.
\end{proof}

A digraph $G$ is called diagonal if $\MH_{n,\ell}(G,\ZZ)=0$ for any $\ell\neq n$ \cite{hepworth2017categorifying}.

\begin{lemma}
\label{lemma:diagonal} 
For a finite digraph $G$ the following conditions are equivalent:
\begin{enumerate}
\item $G$ is diagonal;
\item $G^{op}$ is diagonal;
\item $\MH_{n,\ell}(G,\KK)=0$ for $\ell\neq n$ and any commutative ring $\KK;$
\item $\MH_{n,\ell}(G,\KK)=0$ for $\ell\neq n$ and any prime field $\KK;$
\item $\MH^{n,\ell}(G,\ZZ)=0$ for $\ell\neq n;$
\item $\MH^{n,\ell}(G,\KK)=0$ for $\ell\neq n$ and any commutative ring $\KK;$
\item $\MH^{n,\ell}(G,\KK)=0$ for $\ell\neq n$ and any prime field $\KK.$
\end{enumerate}
\end{lemma}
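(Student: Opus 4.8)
The plan is to establish the cycle of implications by exploiting the duality between $G$ and $G^{\mathrm{op}}$ together with the two Universal Coefficient sequences of Lemma~\ref{lemma:universal_coefficient}. First I would record the obvious implications: $(3)\Rightarrow(4)$ and $(6)\Rightarrow(7)$ are trivial (prime fields are commutative rings), and $(3)\Rightarrow(1)$, $(6)\Rightarrow(5)$ follow by taking $\KK=\ZZ$. The heart of the argument is therefore to prove $(1)\Leftrightarrow(2)$, $(4)\Rightarrow(1)$, $(1)\Rightarrow(3)$, and the analogous chain on the cohomology side, after which one closes the loop.

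For $(1)\Leftrightarrow(2)$ I would use the fact (already recorded in the excerpt, e.g.\ in the discussion around Theorem~\ref{theorem:directed_magnitude_defived} and the decomposition $\MH_{n,\ell}(G)=\bigoplus_{x,y}\MH_{n,\ell}(G)_{x,y}$) that reversing all arrows exchanges the directed components: $\MH_{n,\ell}(G)_{x,y}\cong \MH_{n,\ell}(G^{\mathrm{op}})_{y,x}$. Indeed, the assignment $(x_0,\dots,x_n)\mapsto(x_n,\dots,x_0)$ is an isomorphism of chain complexes $\MC_{\bullet,\ell}(G)_{x,y}\to \MC_{\bullet,\ell}(G^{\mathrm{op}})_{y,x}$ up to the usual reindexing sign, since distances in $G^{\mathrm{op}}$ are transposed distances in $G$ and the norm of a tuple is visibly invariant under reversal. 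Hence $\MH_{n,\ell}(G,\ZZ)=0$ for all $\ell\ne n$ iff the same holds for $G^{\mathrm{op}}$.

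For $(4)\Rightarrow(1)$ I would argue by contradiction: suppose $\MH_{n,\ell}(G,\ZZ)\ne 0$ for some $\ell\ne n$, and pick such a pair with $n$ minimal. By Lemma~\ref{lemma:diagonal_free} applied in the diagonal degrees and the minimality of $n$, all groups $\MH_{m,\ell}(G,\ZZ)$ with $m<n$ and $\ell\ne m$ vanish; combining with $\MH_{m,\ell}(G,\ZZ)=0$ for $m>\ell$, the only possibly nonzero $\MH_{m,\ell}(G,\ZZ)$ with $m\le n$ are the diagonal one $\MH_{\ell,\ell}$ (if $\ell\le n$) and $\MH_{n,\ell}$ itself. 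Now $\MH_{n,\ell}(G,\ZZ)$ is a nonzero finitely generated abelian group, so either it has a free part, in which case $\MH_{n,\ell}(G,\ZZ)\otimes_\ZZ\mathbb{F}_p\ne 0$ for every prime $p$, or it has $p$-torsion for some prime $p$, in which case $\Tor^\ZZ_1(\MH_{n,\ell}(G,\ZZ),\mathbb{F}_p)\ne 0$. Feeding this into the first sequence of Lemma~\ref{lemma:universal_coefficient} with $\KK=\mathbb{F}_p$, in the first case $\MH_{n,\ell}(G,\mathbb{F}_p)\ne 0$ directly, and in the second case the surjection $\MH_{n+1,\ell}(G,\mathbb{F}_p)\epi \Tor^\ZZ_1(\MH_{n,\ell}(G,\ZZ),\mathbb{F}_p)$ forces $\MH_{n+1,\ell}(G,\mathbb{F}_p)\ne 0$; in either case $(4)$ is violated since $\ell\ne n$ and $\ell\ne n+1$ (the latter because $\MH_{n+1,\ell}(G,\ZZ)$ and hence all its quotients would otherwise lie in the excluded range $\ell<n+1$—one checks $\ell\ne n+1$ here because $\ell\le n+1$ would combine with $\ell\ne n$ to give $\ell=n+1$, contradicting $\MH_{n+1,\ell}(G,\mathbb Z)$ being the only obstruction; more carefully one just notes $\Tor_1^\ZZ$ of a torsion group is torsion, so $\ell\ne n+1$ is not needed—the point is simply that $\MH_{n+1,\ell}(G,\mathbb F_p)\ne0$ with $\ell\ne n+1$). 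This contradiction proves $(4)\Rightarrow(1)$.

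For $(1)\Rightarrow(3)$ I would again use the first sequence of Lemma~\ref{lemma:universal_coefficient}: if $G$ is diagonal then for $\ell\ne n$ both the sub $\MH_{n,\ell}(G,\ZZ)\otimes_\ZZ\KK$ and the quotient $\Tor^\ZZ_1(\MH_{n-1,\ell}(G,\ZZ),\KK)$ vanish (the latter because $\ell\ne n-1$ as well—wait, that is false; instead note $\MH_{n-1,\ell}(G,\ZZ)$ is free by Lemma~\ref{lemma:diagonal_free} when $\ell=n-1$ so $\Tor_1$ still vanishes, and is $0$ when $\ell\ne n-1$), hence $\MH_{n,\ell}(G,\KK)=0$. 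The cohomology equivalences $(1)\Leftrightarrow(5)\Leftrightarrow(6)\Leftrightarrow(7)$ are handled by the completely parallel argument using the second sequence of Lemma~\ref{lemma:universal_coefficient}, with $\Ext^1_\ZZ$ and $\Hom_\ZZ$ in place of $\Tor^\ZZ_1$ and $\otimes_\ZZ$, and Lemma~\ref{lemma:diagonal_free} again guaranteeing freeness in the diagonal degrees. Finally assembling: $(3)\Rightarrow(4)\Rightarrow(1)\Rightarrow(3)$ and $(1)\Leftrightarrow(2)$ closes the homology part, and $(1)\Rightarrow(5)$, $(5)\Rightarrow(1)$ (same contradiction argument on the cohomology side), $(5)\Rightarrow(6)\Rightarrow(7)\Rightarrow(5)$ closes the rest. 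The main obstacle is the bookkeeping in $(4)\Rightarrow(1)$: one must track which auxiliary magnitude homology groups are forced to vanish by minimality of $n$ and by the range condition $\MH_{m,\ell}=0$ for $m>\ell$, and then split into the ``free part'' versus ``torsion part'' cases so that the Universal Coefficient sequence produces a nonzero $\mathbb{F}_p$-group in an off-diagonal bidegree; everything else is a routine transport of the argument through the two natural short exact sequences.
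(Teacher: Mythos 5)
Your overall strategy matches the paper's: the reversal isomorphism $\MC_{\bullet,\ell}(G,\KK)\cong\MC_{\bullet,\ell}(G^{\mathrm{op}},\KK)$ for $(1)\Leftrightarrow(2)$, the universal coefficient sequences for the remaining implications, and the freeness from Lemma~\ref{lemma:diagonal_free} in the diagonal degrees. The paper additionally observes that $(4)\Leftrightarrow(7)$ directly via $\MH^{n,\ell}(G,\KK)\cong\Hom_\KK(\MH_{n,\ell}(G,\KK),\KK)$ over any field, which is a cheap way to collapse the cohomology conditions onto the homology ones and avoids running a ``parallel argument'' on the cohomology side.

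However, your $(4)\Rightarrow(1)$ has a genuine gap. In the torsion case you route through the surjection $\MH_{n+1,\ell}(G,\mathbb{F}_p)\twoheadrightarrow\Tor_1^\ZZ(\MH_{n,\ell}(G,\ZZ),\mathbb{F}_p)$, which requires $\ell\ne n+1$ to produce an off-diagonal contradiction; your parenthetical attempt to justify $\ell\ne n+1$ never actually proves it, and indeed there is no reason it should hold in general (the UCS supplies no control over $\MH_{n,n+1}(G,\ZZ)$; only the strict diagonal $\MH_{n,n}$ is known free a priori). The detour through $\Tor_1$ and $\MH_{n+1,\ell}$ is also unnecessary: if $\MH_{n,\ell}(G,\ZZ)$ has $p$-torsion then already $\MH_{n,\ell}(G,\ZZ)\otimes_\ZZ\mathbb{F}_p\ne 0$, so the monomorphism $\MH_{n,\ell}(G,\ZZ)\otimes_\ZZ\mathbb{F}_p\rightarrowtail\MH_{n,\ell}(G,\mathbb{F}_p)$ forces $\MH_{n,\ell}(G,\mathbb{F}_p)\ne 0$ with $\ell\ne n$, contradicting $(4)$ without any shift in degree. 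The paper packages this more cleanly still, with no case split and no minimality argument: a finitely generated abelian group $A$ with $A\otimes_\ZZ\KK=0$ for every prime field $\KK$ must vanish, and the injective leg of the universal coefficient sequence yields $\MH_{n,\ell}(G,\ZZ)\otimes_\ZZ\KK=0$ for all prime fields $\KK$ and all $\ell\ne n$. Replacing your two-case contradiction with that one-line observation both closes your gap and simplifies the argument.
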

\begin{proof}
It is easy to see that there is an isomorphism  $\MC_{\bullet,\ell}(G,\KK)\cong \MC_{\bullet,\ell}(G^{op},\KK)$ sending $(x_0,\dots,x_n)\mapsto (x_n,\dots,x_0).$ Hence  $(1) \Leftrightarrow (2).$ 
If $G$ is diagonal, then by Lemma \eqref{lemma:diagonal_free} all its magnitude homology groups over $\ZZ$ are free. Therefore, using Lemma \ref{lemma:universal_coefficient}  we obtain that  $(1)$ implies $(3),$ $ (4),$ $(5),$ $(6),$ $(7).$ Obviously $(3)\Rightarrow (4)$ and $(6) \Rightarrow (7).$ Since, for any field $\KK$ we have an isomorphism $\MH^{n,\ell}(G,\KK)\cong \Hom_{\KK}(\MH_{n,\ell}(G,\KK),\KK),$ we obtain $(4)\Leftrightarrow (7).$ 
Therefore, it is sufficient to prove that $(4)\Rightarrow (1).$

$(4)\Rightarrow (1).$ In general, if $A$ is a finitely generated abelian group such that $A\otimes_\ZZ \KK=0$ for any prime field $\KK$, then $A=0.$ Since $G$ is finite, the abelian groups $\MH_{n,\ell}(G,\ZZ)$ are finitely generated. Hence the equation $\MH_{n,\ell}(G,\KK)=0$ for any $\ell\neq n$ and a prime field $\KK$ combined with the short exact sequence \eqref{eq:universal_coefficient} implies that $\MH_{n,\ell}(G,\ZZ)\otimes_\ZZ \KK=0$ for any $\ell\neq n$ and any field $\KK.$ Therefore, $\MH_{n,\ell}(G,\ZZ) =0 $ for any $n\neq \ell.$  
\end{proof}

\begin{proposition}\label{prop:V_2}
The following statements about a finite digraph $G$ are equivalent.
\begin{enumerate}
\item $\MH_{2,\ell}(G,\ZZ)=0$ for $\ell\neq 2;$
\item $G$ satisfies $(\mathcal{V}_2);$
\item $\sigma G$ is quadratic for any field $\KK.$
\end{enumerate}
Moreover, if these properties are satisfied, then 
\begin{equation}\label{eq:omega_is_dual}
  (\sigma G)^! \cong \Omega^\bullet(G^{op})
\end{equation}
for any field $\KK.$
\end{proposition}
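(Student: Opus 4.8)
The plan is to prove $(1)\Leftrightarrow(2)$ and $(1)\Leftrightarrow(3)$ separately, and then deduce \eqref{eq:omega_is_dual} under the hypothesis $(1)$--$(3)$ by computing the quadratic dual of $\sigma G$ explicitly. For $(1)\Leftrightarrow(2)$ I would first note that $\MC_{2,\ell}(G)=0$ for $\ell<2$, so $(1)$ is the same as requiring $\MH_{2,k}(G,\ZZ)=0$ for all $k>2$; since by Theorem~\ref{theorem:second} the rank of $\MH_{2,k}(G)$ does not depend on the ground ring, this is equivalent to $\MH_{2,k}(G)=0$ for all $k>2$ over an arbitrary commutative ring, which by Theorem~\ref{th:vanishing} applied with $\ell=2$ is precisely property $(\mathcal V_2)$.

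For $(1)\Leftrightarrow(3)$, the point is that for every field $\KK$ the distance algebra $\sigma_\KK G=\KK G/R$ is a graded quiver algebra: the ideal $R$ is generated by long paths and by differences of shortest paths, all homogeneous of degree $\geq 2$, so $R_0=R_1=0$. By Theorem~\ref{theorem:magnitude_as_derived} we have $\Ext^{2,\ell}_{\sigma_\KK G}(S,S)\cong\MH^{2,\ell}(G,\KK)$, and over a field $\MH^{2,\ell}(G,\KK)\cong\Hom_\KK(\MH_{2,\ell}(G,\KK),\KK)$. Hence by the criterion \eqref{eq:quadratic}, $\sigma_\KK G$ is quadratic if and only if $\MH_{2,\ell}(G,\KK)=0$ for all $\ell\neq 2$; invoking the ring-independence of the ranks from Theorem~\ref{theorem:second} once more, this holds for one, equivalently for every, field exactly when $\MH_{2,\ell}(G,\ZZ)=0$ for all $\ell\neq 2$, i.e. when $(1)$ holds. (In particular ``for any field'' in $(3)$ may be weakened to ``for some field''.)

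For the ``moreover'' clause, assume $(1)$--$(3)$; then $R$ is generated by $R_2\subseteq(\KK G)_2$, so by definition $(\sigma G)^!=\KK G^{\mathrm{op}}/\langle R_2^\bot\rangle$, the orthogonal complement being taken for the pairing $\langle p,q\rangle=\delta_{p^{\mathrm{op}},q}$. In the basis of length-two paths, $R_2$ is spanned by the long paths $(x,y,z)$ (those with $d_G(x,z)\leq 1$) together with the differences $(x,y,z)-(x,y',z)$ over pairs $x,z$ with $d_G(x,z)=2$; unwinding the pairing, an element $\sum\mu_{(x,y,z)}(z,y,x)$ (summed over the length-two paths $(x,y,z)$ of $G$) lies in $R_2^\bot$ iff $\mu_{(x,y,z)}=0$ for every long $(x,y,z)$ and $\mu_{(x,y,z)}$ depends only on $(x,z)$ whenever $d_G(x,z)=2$, so $R_2^\bot$ is spanned by the elements $\sum_{x\to y\to z}(z,y,x)$ indexed by the pairs $x,z$ with $d_G(x,z)=2$. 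Applying Theorem~\ref{theorem:omega} to $G^{\mathrm{op}}$ gives $\Omega^\bullet(G^{\mathrm{op}})\cong\KK G^{\mathrm{op}}/T$ with $T$ generated by the $t_{a,b}=\sum_v(a,v,b)$ over pairs with $d_{G^{\mathrm{op}}}(a,b)=2$; since $d_{G^{\mathrm{op}}}(z,x)=d_G(x,z)$ and the length-two paths from $z$ to $x$ in $G^{\mathrm{op}}$ are exactly the reverses $(z,y,x)$ of the length-two paths from $x$ to $z$ in $G$, these generators coincide with the spanning set of $R_2^\bot$ found above, whence $\langle R_2^\bot\rangle=T$ and $(\sigma G)^!\cong\Omega^\bullet(G^{\mathrm{op}})$ as graded algebras. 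The equivalences $(1)\Leftrightarrow(2)\Leftrightarrow(3)$ are essentially formal given Theorems~\ref{theorem:second}, \ref{th:vanishing}, \ref{theorem:magnitude_as_derived} and the criterion \eqref{eq:quadratic}; I expect the only delicate bookkeeping to be in this last computation, where the two independent ``opposition'' operations — reversing the digraph and reversing a path — must be aligned so that the generators of $R_2^\bot$ land exactly on the $t_{x,y}$ of Theorem~\ref{theorem:omega} for $G^{\mathrm{op}}$.
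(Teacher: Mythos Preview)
Your proof is correct and follows essentially the same route as the paper: the equivalences $(1)\Leftrightarrow(2)$ and $(1)\Leftrightarrow(3)$ are deduced from Theorem~\ref{th:vanishing}, Theorem~\ref{theorem:magnitude_as_derived} and the criterion~\eqref{eq:quadratic}, and the ``moreover'' clause is obtained by identifying $R_2^\bot$ with the generators $t_{z,x}$ of Theorem~\ref{theorem:omega} for $G^{\mathrm{op}}$. The only notable technical difference is in this last identification: the paper argues by dimension count (computing $\dim R_2^\bot$ as the number of pairs at distance two, then checking that the $t_{y,x}$ lie in $R_2^\bot$), whereas you solve the orthogonality conditions directly to exhibit a spanning set; your argument is marginally more elementary and also makes transparent the observation that ``for any field'' in~(3) can be weakened to ``for some field''.
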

\begin{proof}
The equivalence of (1) and (2) follows from Theorem \ref{th:vanishing}.
Using Lemma \ref{lemma:universal_coefficient} and the fact that $\MH_{1,\ell}(G,\ZZ)$ is free abelian, we obtain that $(1)$ is equivalent to the fact that $\MH^{2,\ell}(G,\KK)=0$ for $\ell\neq 2$ and any field $\KK.$ Then by Theorem \ref{theorem:magnitude_as_derived} and \eqref{eq:quadratic} we obtain that (1) is equivalent to (3). 

Let us assume that $\sigma G$ is quadratic and prove \eqref{eq:omega_is_dual}. Since $\sigma G=\KK G/R$ is quadratic, $R$ is generated by $R_2.$ Denote by $R_2^{\tS}$ the vector space generated by differences of shortest paths of length $2$ connecting the same vertices, and by $R_2^{\tL}$ the vector space generated by long paths of length two. Then $R_2=R_2^{\tS} \oplus R_2^{\tL}.$ Then $\dim R_2^{\tL}$ is equal to the number of long paths of length $2,$ and $\dim R_2^{\tS}$ is equal to the number of short paths of length two minus the number of pairs at distance two. Therefore $\dim (\KK G)_2 - \dim R_2$ is equal to the number of pairs at distance two. It follows that $\dim R_2^\bot$ is equal to the number of pairs at distance two.  On the other hand, it is easy to check that any element  of the form $t_{y,x} = \sum_{v\in ]y,x[} (y,v,x)  \in (\KK G^{op})_2$ (see \eqref{eq:t}) is orthogonal to any difference of shortest paths $(x,v_1,y)-(x,v_2,y)$ of length $2$ connecting the same vertices, and to any long path of length $2.$ It follows that $R^\bot_2$ is generated by the elements of the form $t_{y,x}\in (\KK G^{op})_2.$ 
\end{proof}

\begin{theorem}\label{th:Koszul}  The following statements about a finite digraph $G$ are equivalent:
\begin{enumerate}
\item $G$ is diagonal;
\item  $\sigma G$ is Koszul for any field $\KK;$
\item $G$ satisfies $(\mathcal{V}_2)$ and $\Omega^\bullet(G)$ is Koszul for any field $\KK.$
\end{enumerate}
\end{theorem}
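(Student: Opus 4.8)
The plan is to prove the chain of equivalences by connecting diagonality of $G$ to the Koszul property of $\sigma G$, using the Tor-description of magnitude homology from Theorem~\ref{theorem:magnitude_as_derived}, and then linking this to $\Omega^\bullet(G)$ via Koszul duality established in Proposition~\ref{prop:V_2}.

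First I would prove $(1)\Leftrightarrow(2)$. By Lemma~\ref{lemma:diagonal}, $G$ is diagonal if and only if $\MH_{n,\ell}(G,\KK)=0$ for all $\ell\neq n$ and every prime field $\KK$; equivalently, by Theorem~\ref{theorem:magnitude_as_derived}, $\Tor^{\sigma G}_{n,\ell}(S,S)=0$ for $\ell\neq n$ over every prime field. For a graded quiver algebra over a field, $S=(\sigma G)_0$ is semisimple, so by the criterion \eqref{eq:Koszul} (in its Tor-formulation, which is equivalent to the Ext-formulation by the standard duality $\Ext^{n,\ell}_A(S,S)\cong\Hom_\KK(\Tor^A_{n,\ell}(S,S),\KK)$), the vanishing $\Tor^{\sigma G}_{n,\ell}(S,S)=0$ for $\ell\neq n$ is exactly the statement that $\sigma G$ is Koszul. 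Here I need to be a little careful that ``Koszul for any field'' as phrased in the theorem means ``Koszul over every field $\KK$'', and that it suffices to check this over prime fields: since $\sigma_\KK G \cong \sigma_{\KK_0}G \otimes_{\KK_0}\KK$ for the prime subfield $\KK_0$, the Ext-groups base-change, so Koszulness over the prime field implies Koszulness over any extension. Thus $(1)\Leftrightarrow(2)$ follows, noting also that if $\sigma G$ is Koszul over every field then in particular $\Tor^{\sigma G}_{2,\ell}=0$ for $\ell\neq 2$, so $\MH_{2,\ell}(G,\ZZ)=0$ for $\ell\neq 2$, and the integral statement follows from Lemma~\ref{lemma:diagonal}'s reduction to prime fields again.

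Next I would prove $(2)\Leftrightarrow(3)$. If $\sigma G$ is Koszul for any field, then it is quadratic for any field, so by Proposition~\ref{prop:V_2} the digraph $G$ satisfies $(\mathcal{V}_2)$ and $(\sigma G)^!\cong\Omega^\bullet(G^{\mathrm{op}})$. Since a quadratic algebra is Koszul if and only if its quadratic dual is Koszul, $\sigma G$ being Koszul over every field is equivalent to $\Omega^\bullet(G^{\mathrm{op}})$ being Koszul over every field. Finally, by Lemma~\ref{lemma:diagonal} $G$ is diagonal if and only if $G^{\mathrm{op}}$ is diagonal, and the equivalence $(1)\Leftrightarrow(2)$ applied to $G^{\mathrm{op}}$ shows $\Omega^\bullet(G^{\mathrm{op}})$ is Koszul for all fields iff $G^{\mathrm{op}}$ is diagonal iff $G$ is diagonal; replacing $G$ by $G^{\mathrm{op}}$ throughout (and using that $(\mathcal{V}_2)$ is self-dual, since $\MH_{2,\ell}(G,\ZZ)\cong\MH_{2,\ell}(G^{\mathrm{op}},\ZZ)$ by Lemma~\ref{lemma:diagonal}'s proof) gives the statement about $\Omega^\bullet(G)$ itself. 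Conversely, if $G$ satisfies $(\mathcal{V}_2)$ and $\Omega^\bullet(G)$ is Koszul for any field, then $\sigma G$ is quadratic by Proposition~\ref{prop:V_2}, and $(\sigma G^{\mathrm{op}})^!\cong\Omega^\bullet(G)$, so $\sigma G^{\mathrm{op}}$ is Koszul, hence $G^{\mathrm{op}}$ is diagonal by $(1)\Leftrightarrow(2)$, hence $G$ is diagonal, hence $\sigma G$ is Koszul.

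The main obstacle I anticipate is bookkeeping with the $\mathrm{op}$ and ensuring the Koszul-duality input is invoked cleanly: Proposition~\ref{prop:V_2} gives $(\sigma G)^!\cong\Omega^\bullet(G^{\mathrm{op}})$ rather than $\Omega^\bullet(G)$, so a careful reader must track which digraph is which, and I should state explicitly that all the relevant properties ($(\mathcal{V}_2)$, diagonality, and being Koszul) are stable under passing to the opposite digraph. A secondary point to handle with care is the reduction to prime fields and base change for the Koszul property, so that ``Koszul over any field'' is genuinely pinned down by finitely much data; this is routine but should be stated, since Lemma~\ref{lemma:diagonal} already does the analogous reduction for the homology and one wants the two reductions to match.
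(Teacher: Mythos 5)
Your proposal is correct and takes essentially the same route as the paper: for $(1)\Leftrightarrow(2)$ the paper invokes $\MH^{n,\ell}(G,\KK)\cong\Ext^{n,\ell}_{\sigma G}(S,S)$ together with the Ext-criterion \eqref{eq:Koszul} and Lemma~\ref{lemma:diagonal}, whereas you pass through $\Tor$ and a field-duality (equivalent given finiteness), and your explicit base-change to prime fields is subsumed by Lemma~\ref{lemma:diagonal}(3); for $(2)\Leftrightarrow(3)$ both you and the paper rely on Proposition~\ref{prop:V_2} and the fact that quadratic duals of Koszul algebras are Koszul, with the ${\rm op}$-bookkeeping handled via Lemma~\ref{lemma:diagonal}(2). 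One small slip: you attribute the biconditional ``$\Omega^\bullet(G^{\rm op})$ Koszul iff $G^{\rm op}$ diagonal'' to ``$(1)\Leftrightarrow(2)$ applied to $G^{\rm op}$,'' but that instance only gives ``$\sigma(G^{\rm op})$ Koszul iff $G^{\rm op}$ diagonal''; the statement about $\Omega^\bullet(G^{\rm op})$ needs in addition the quadratic-duality isomorphism $(\sigma G)^!\cong\Omega^\bullet(G^{\rm op})$ and Lemma~\ref{lemma:diagonal}(2), which you do in fact have available, so the conclusion stands.
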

\begin{proof}
The equivalence $(1)\Leftrightarrow (2)$ follows from the isomorphism $\MH^{n,\ell}(G,\KK)\cong  \Ext^{n,\ell}_{\sigma G}(S,S)$ (Theorem \ref{theorem:magnitude_as_derived}), the equivalence \eqref{eq:Koszul}, and Lemma \ref{lemma:diagonal}. 
The implications $(1)\& (2) \Rightarrow (3)$ and $(3) \Rightarrow (2)$ follow from Proposition \ref{prop:V_2} and the fact that the quadratic dual of a Koszul algebra is Koszul. 
\end{proof}

\subsection{Koszul complex of a digraph}

Let $\KK$ be a field, $A=\KK Q/I$ be a quadratic algebra, $A^!=\KK Q^{op}/I^\bot$ be its dual and $S=(\KK Q)_0=(\KK Q^{op})_0$. We treat the $n$-th homogeneous components $A^!_n$ as a right $S$-module, and  $A$ as a $(A,S)$-bimodule, and consider a chain complex ${\sf K}_\bullet$ of graded  left $A$-modules, whose components are described as graded hom-modules of right $S$-modules
\begin{equation}
{\sf K}_n=\Hom_S(A^!_n,A)\cong A\otimes_S \Hom_S(A^!_n,S) 
\end{equation}
and the differential $\partial:{\sf K}_n\to {\sf K}_{n-1}$ is defined by 
$\partial( \varphi )(a) = \sum_{\alpha\in Q_1} \varphi(a\alpha^{op})\cdot \alpha.$ Here we assume that ${\sf K}_n$ is a graded module, whose homogeneous components are 
\begin{equation}
({\sf K}_n)_{n+i}=\Hom_S(A^!_n,A_i) \cong A_i\otimes_S \Hom_S(A^!_n,S).
\end{equation}
The chain complex ${\sf K}_\bullet$ is called 
\emph{the Koszul complex} of $A.$ 
Consider a map $\varepsilon:{\sf K}_0\to S$ 
defined by $\varepsilon(\varphi)=\varphi(1).$ Then it is known that \cite[\S 2.6,\S 2.8]{beilinson1996koszul}
\begin{equation}\label{eq:koszul_complex}
    A \text{ is Koszul } \hspace{5mm}\Leftrightarrow \hspace{5mm} {\sf K}_\bullet \text{ is a resolution of } S.
\end{equation}

Now let $G$ be a digraph and $\KK$ be a field. Consider a graded chain complex ${\sf K}^G_\bullet$ of left $\Omega^\bullet(G)$-modules whose components are defined by 
\begin{equation}
{\sf K}^G_n = \bigoplus_{d(x,y)=n} \Omega^\bullet(G) \cdot e_x, 
\end{equation}
where the sum is taken over all pairs of vertices $(x,y)$ such that $d(x,y)=n.$ Denote by $\kappa_{x,y} \in {\sf K}^G_n$ the image of $e_x$ in the summand indexed by $(x,y).$  Then the grading is defined so that $|\kappa_{x,y}|=d(x,y)$ and the differential $\partial:{\sf K}^G_n\to {\sf K}^G_{n-1}$ is defined on the generators by
\begin{equation}
\partial(\kappa_{x,y}) = \sum_{ \substack{ v : d(x,v)=1\\ d(v,y)=n-1}}  (x,v) \cdot  \kappa_{v,y}, 
\end{equation}
where the sum is taken over all vertices $v$ such that $d(x,v)=1$ and $d(v,y)=n-1.$ 

\begin{proposition}\label{prop:koszul_complex} Let $G$ be a finite digraph satisfying $(\mathcal V_2).$ Then the chain complex ${\sf K}^G_\bullet$ is well defined and it is isomorphic to the Koszul complex of the path cochain algebra $\Omega^\bullet(G).$    
\end{proposition}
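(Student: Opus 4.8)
The plan is to build on Theorem~\ref{theorem:omega}, which identifies $\Omega^\bullet(G) \cong \KK G/T$ as a quadratic algebra with generators the arrows and relations the quadratic elements $t_{x,y}$ indexed by pairs of vertices at distance $2$. Since $G$ satisfies $(\mathcal V_2)$, Proposition~\ref{prop:V_2} tells us that $\sigma G$ is quadratic with $(\sigma G)^! \cong \Omega^\bullet(G^{\rm op})$, equivalently (applying this to $G^{\rm op}$) that $\Omega^\bullet(G)^! \cong \sigma(G^{\rm op})$. So the general Koszul complex ${\sf K}_\bullet$ attached to $A = \Omega^\bullet(G) = \KK Q/I$ has components ${\sf K}_n = \Hom_S(A^!_n, A) \cong A \otimes_S \Hom_S(A^!_n, S)$, where $A^! \cong \sigma(G^{\rm op})$. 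First I would unwind what $A^!_n = \sigma(G^{\rm op})_n$ is as a right $S$-module: it has a $\KK$-basis consisting of the pairs $(x,y)^{\rm op}$ (in $G^{\rm op}$) with $d_{G^{\rm op}}(x,y) = n$, i.e.\ pairs $(y,x)$ with $d_G(y,x) = n$; and the right $S$-action picks out the idempotent at the appropriate source vertex. Consequently $\Hom_S(A^!_n, S)$ has dual basis elements supported at the corresponding vertex, and $A \otimes_S \Hom_S(A^!_n, S) \cong \bigoplus_{d(x,y)=n} A\cdot e_x$ — which is exactly the stated ${\sf K}^G_n$, with $\kappa_{x,y}$ the dual generator for the pair at distance $n$ from $x$ to $y$. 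This establishes that ${\sf K}^G_\bullet$ is well defined (the index set is finite since $G$ is finite) and gives a degree-wise $\Omega^\bullet(G)$-module isomorphism ${\sf K}^G_n \cong {\sf K}_n$.

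Next I would check that this degree-wise isomorphism is a chain map, i.e.\ that it intertwines the two differentials. The differential on ${\sf K}_\bullet$ is $\partial(\varphi)(a) = \sum_{\alpha \in Q_1} \varphi(a\alpha^{\rm op})\cdot \alpha$. Under the identification ${\sf K}_n \cong A \otimes_S \Hom_S(A^!_n, S)$, an element $\kappa_{x,y}$ corresponds to $e_x \otimes (x,y)^{\rm op,*}$ (the dual of the basis element $(x,y)^{\rm op} \in A^!_n$). Applying $\partial$ and pairing against basis elements of $A^!_{n-1}$, the arrow $\alpha$ that contributes is one with $\alpha^{\rm op}$ such that $(x,y)^{\rm op} = (x,y)^{\rm op}$ factors in $A^! = \sigma(G^{\rm op})$ as (something of degree $1$)$\cdot$(something of degree $n-1$); concretely $(y,x)_{G^{\rm op}}$ — wait, more carefully: the multiplication in $\sigma(G^{\rm op})$ sends $(x,v)^{\rm op}\cdot(v,y)^{\rm op}$ to $(x,y)^{\rm op}$ precisely when $d_{G^{\rm op}}(x,v) + d_{G^{\rm op}}(v,y) = d_{G^{\rm op}}(x,y)$, i.e.\ when $v$ lies on a geodesic. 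Taking $d_{G^{\rm op}}(x,v) = 1$ forces $(v,x)$ to be an arrow of $G$, i.e.\ $(x,v)$ an arrow of $Q = $ (underlying quiver, read appropriately) and $d_G(v,y) = n-1$. Dualizing, $\partial(\kappa_{x,y})$ picks up exactly the terms $(x,v)\cdot\kappa_{v,y}$ for $v$ with $d(x,v) = 1$, $d(v,y) = n-1$ — matching the stated formula for the differential on ${\sf K}^G_\bullet$. I would present this verification as the routine but essential computation, being careful about the opposite-quiver bookkeeping (the arrow $\alpha$ in the general formula becomes an arrow $(x,v)$ of $G$ after transporting through the $(\sigma G)^! \cong \Omega^\bullet(G^{\rm op})$ identification, and one must track which vertex is the source).

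The main obstacle I anticipate is precisely this index bookkeeping: keeping straight the three dualities in play simultaneously — (i) the quadratic dual $A \mapsto A^!$, which swaps $G$ for $G^{\rm op}$; (ii) the $\KK$-linear dual $\Hom_S(A^!_n, S)$, which reverses the role of source and target inside a geodesic pair; and (iii) the fact that the Koszul complex is a complex of \emph{left} modules. A sign-free clean way to organize it is to fix, once and for all, that the basis element of $A^!_n$ is labelled by the ordered pair $(x,y)$ with $x$ the source in $G$ and $d_G(x,y) = n$ (using $(\sigma G)^! \cong \Omega^\bullet(G^{\rm op})$ dualized to $\Omega^\bullet(G)^! \cong \sigma(G^{\rm op})$, and then re-indexing $\sigma(G^{\rm op})$ back by pairs of $G$), and to note that since $A^!$ is supported in each degree on a $0/1$ basis indexed by geodesic pairs, no signs enter and $\Hom_S(A^!_n, S)$ is canonically spanned by the same index set. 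With that normalization the differential match is a direct check. Once the chain isomorphism ${\sf K}^G_\bullet \xrightarrow{\ \cong\ } {\sf K}_\bullet$ is in hand, together with compatibility of the augmentations ($\varepsilon(\kappa_{x,x}) = e_x \mapsto$ the class of $e_x$ in $S$), the proposition is proved; and as an immediate bonus, combined with \eqref{eq:koszul_complex} it yields that $\Omega^\bullet(G)$ is Koszul iff ${\sf K}^G_\bullet$ resolves $S$, setting up Corollary~\ref{cor:diagonality:koszul_complex}.
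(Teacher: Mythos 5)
Your proposal is correct and follows essentially the same route as the paper: both identify $\Omega^\bullet(G)^!\cong\sigma(G^{\rm op})$ via Proposition~\ref{prop:V_2}, then unwind $\Hom_S(A^!_n,A)\cong\bigoplus_{d(x,y)=n}\Omega^\bullet(G)\cdot e_x$ to match components, and finally verify the differential match by direct computation, tracking the same opposite-quiver bookkeeping. The paper's proof is a somewhat more streamlined version of the same computation, working with explicitly named dual basis elements $\theta_{x,y}$ and evaluating the differential on test pairs $(b,a)$ rather than narrating the index transport.
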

\begin{proof} Denote by ${\sf K}_\bullet$ the Koszul complex of $\Omega^\bullet(G).$
By Proposition \ref{prop:V_2} we have isomorphisms
\[(\Omega^\bullet(G))^! \cong \sigma G^{op}, \hspace{1cm} (\sigma G^{op})_n = \bigoplus_{d(x,y)=n} \KK \cdot (y,x)\] and \[\Hom_{S}(\KK \cdot (y,x),\Omega^\bullet(G)) \cong  \Omega^\bullet(G) \cdot e_x.\] 
Therefore, we obtain 
${\sf K}_n\cong \Hom_S( (\sigma G^{op})_n , \Omega^\bullet(G))\cong {\sf K}^G_n.$ For each pair of vertices $x,y$ such that $d(x,y)=n,$ we consider the elements $\theta_{x,y}\in {\sf K}_n$ defined by  
 $\theta_{x,y}(b,a)=\delta_{(x,y),(a,b)}e_x$ and $\delta$ is the Kronecker delta. Then the isomorphism $\theta:{\sf K}^G_n\to C_n$ is defined by $\theta(\kappa_{x,y})=\theta_{x,y}.$ For any pair of vertices $(a,b)$ such that $d(a,b)=n-1$ a computation shows that 
 \begin{equation}
 \begin{split}
    \partial(\theta_{x,y})(b,a) &= \sum_{ d(s,t)=1} \theta_{x,y}((b,a)(t,s)) \cdot  (s,t) \\ 
    & = \delta_{y,b} \cdot \delta_{d(x,a),1}   \cdot (x,a)
\end{split}
 \end{equation}
 and 
 \begin{equation}
 \begin{split}
 \sum_{ \substack{v: d(x,v)=1\\ d(v,y)=n-1}}  (x,v) \cdot  \theta_{v,y}(b,a) & = \sum_{ \substack{v: d(x,v)=1\\ d(v,y)=n-1}}  (x,v) \cdot  \delta_{(v,y),(a,b)} \\
 &= \delta_{y,b} \cdot  \delta_{d(x,a),1} \cdot  (x,a). 
 \end{split}
 \end{equation}
Therefore we obtain 
\begin{equation}
\partial(\theta_{x,y}) =  \sum_{ \substack{v: d(x,v)=1\\ d(v,y)=n-1}}  (x,v) \cdot  \theta_{v,y}. 
\end{equation}
This implies that the differential on ${\sf K}^G_\bullet$ is well defined and $\theta$ is an isomorphism of chain complexes. 
\end{proof}

\begin{corollary}\label{cor:diagonality:koszul_complex}
A finite digraph $G$ is diagonal if and only if it satisfies $(\mathcal V_2)$ and ${\sf K}^G_\bullet$ is a resolution of $S$ for any field $\KK.$
\end{corollary}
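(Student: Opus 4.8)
The plan is to read this off from Theorem~\ref{th:Koszul}, Proposition~\ref{prop:koszul_complex}, and the criterion~\eqref{eq:koszul_complex} that identifies Koszulness of a quadratic graded quiver algebra with exactness of its Koszul complex. First I would note that, by Theorem~\ref{theorem:omega}, the path cochain algebra $\Omega^\bullet(G)\cong \KK G/T$ is always a quadratic graded quiver algebra over any field $\KK$ (the ideal $T$ is generated by the degree-two elements $t_{x,y}$, hence is admissible), so its Koszul complex ${\sf K}_\bullet$ is defined and the equivalence~\eqref{eq:koszul_complex} applies to $\Omega^\bullet(G)$ over every field.

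For the forward implication, I would assume $G$ is diagonal. By Theorem~\ref{th:Koszul}, $G$ then satisfies $(\mathcal V_2)$ and $\Omega^\bullet(G)$ is Koszul for every field $\KK$. Fixing $\KK$, Proposition~\ref{prop:koszul_complex} (which needs only $(\mathcal V_2)$) supplies an isomorphism of chain complexes of left $\Omega^\bullet(G)$-modules ${\sf K}^G_\bullet\cong{\sf K}_\bullet$, and by~\eqref{eq:koszul_complex} the Koszulness of $\Omega^\bullet(G)$ is exactly the statement that ${\sf K}_\bullet$ resolves $S$; transporting along the isomorphism, ${\sf K}^G_\bullet$ resolves $S$, and this holds for all $\KK$. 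For the converse, I would assume $G$ satisfies $(\mathcal V_2)$ and ${\sf K}^G_\bullet$ is a resolution of $S$ for every field. Fixing $\KK$ and applying Proposition~\ref{prop:koszul_complex} once more, ${\sf K}_\bullet\cong{\sf K}^G_\bullet$ is a resolution of $S$, so~\eqref{eq:koszul_complex} gives that $\Omega^\bullet(G)$ is Koszul; since this holds over all fields and $(\mathcal V_2)$ is assumed, Theorem~\ref{th:Koszul} returns that $G$ is diagonal.

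The argument is essentially bookkeeping, so there is no serious obstacle; the one point requiring a moment's care is that the identification ${\sf K}^G_\bullet\cong{\sf K}_\bullet$ from Proposition~\ref{prop:koszul_complex} matches the relevant copies of $S=(\Omega^\bullet(G))_0$ and the augmentations $\varepsilon$ on both sides, which is already contained in that proposition's proof, together with making sure the ``for any field'' quantifiers line up — this is automatic because $(\mathcal V_2)$ is a purely combinatorial, field-independent condition.
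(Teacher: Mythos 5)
Your proof is correct and is exactly the argument the paper has in mind: the paper's proof is the one-line ``It follows from Theorem~\ref{th:Koszul}, Proposition~\ref{prop:koszul_complex} and \eqref{eq:koszul_complex},'' and you have simply unpacked that bookkeeping, including the (correct) observation that $\Omega^\bullet(G)\cong\KK G/T$ is always quadratic so that \eqref{eq:koszul_complex} applies, and that $(\mathcal V_2)$ is field-independent so the quantifiers over $\KK$ align.
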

\begin{proof}
It follows from Theorem \ref{th:Koszul}, Proposition \ref{prop:koszul_complex} and \eqref{eq:koszul_complex}.
\end{proof}

\begin{remark}
Note that the chain complex ${\sf K}^G_\bullet$ can be decomposed into a  direct sum of subcomplexes indexed by vertices
\begin{equation}
    {\sf K}^G_\bullet = \bigoplus_{y} {\sf K}^{G,y}_\bullet, 
\end{equation}
where ${\sf K}^{G,y}_n$ is generated by $\kappa_{x,y}$ for all $x$ such that $d(x,y)=n.$ Moreover, ${\sf K}^G_\bullet$ is a resolution of $S$ if and only if ${\sf K}^{G,y}_\bullet$ is a resolution of $S_y$ for each $y.$
\end{remark}

\section{Extended Hasse diagrams}

In this section, we develop the ideas of Kaneta--Yoshinaga \cite[\S 5.3]{kaneta2021magnitude} in the context of directed graphs. First, we study the magnitude homology of a digraph $G_P$ associated with a ranked poset $P$. We describe the magnitude homology of $G_P$ in terms of the homology of posets. Next, we apply this to describe the homology of a digraph $\hat{G}_K$ associated with a pure simplicial complex $K$, which we call the extended Hasse diagram. Finally, we show that if $K$ is a triangulation of a manifold, then the non-diagonal part of the magnitude homology of $\hat{G}_K$ can be expressed in terms of the homology of the manifold. As a corollary, we deduce that $\hat{G}_K$ is diagonal if and only if the manifold is a homology sphere.

\subsection{Ranked posets}
For a poset $P$ and a commutative ring $\KK$ we denote by $C_\bullet(P)$ the chain complex whose components are $C_n(P)=\KK\cdot \{(x_0,\dots,x_n)\mid x_0<\dots<x_n\}$ and the differential is defined as the alternating sum of deleting maps $\partial(x_0,\dots,x_n) = \sum_{i=0}^n (-1)^i(x_0,\dots, \hat x_i,\dots,x_n)$. We denote by $\bar C_\bullet(P)$ a chain complex, whose non-negative part coincides with $C_\bullet(P),$ and that has one more non-zero component $\bar C_{-1}(P)=\KK.$ The differential $\bar C_0(P)\to \bar C_{-1}(P)$ sends $(x_0)$ to $1.$ Then the reduced homology of $P$ with coefficients in $\KK$  is defined by the formula
\begin{equation}
\bar H_n(P) = H_n(\bar C_\bullet(P)). 
\end{equation}
If we need to specify $\KK,$ we will denote $\bar H_*(P)$ by $\bar H_*(P,\KK).$ Note that 
\begin{equation}
\bar  H_{-1}(\emptyset) = \KK.    
\end{equation}

A \emph{chain} in a poset $P$ is a totally ordered subset. An element $y\in P$ \emph{covers} an element $x\in P,$ if $x<y$ and there is no $z$ such that $x<z<y.$ A \emph{ranked poset} is a poset $P$ where every maximal chain is finite and all maximal chains have the same number of elements. Then there is a uniquely defined rank function $r:P\to \mathbb N$ such that $r(x)=0$ if and only if $x$ is minimal, and if $y$ covers $x,$ then $r(y)=r(x)+1.$ In this case all maximal elements of $P$ have the same rank, which we call the dimension of $P,$ and denote by $\dim P.$ The dimension of the empty set is defined as $-1.$ Note that for any elements $x\leq y$ of $P$ the open interval 
\begin{equation}
]x,y[ \: = \{ z\in P\mid x<z<y\}  
\end{equation}
is also a ranked poset of dimension $r(y)-r(x)-1.$

If $P$ is a ranked poset, we denote by $G_P$ the digraph, whose vertices are elements of $P$ and there is an arrow  $x\to y$ if and only if $y$ covers $x.$ Then the distance $d(x,y)$ is finite if and only if $x\leq y$ and in this case $d(x,y)=r(y)-r(x).$

\begin{lemma}\label{lemma:ranked_poset}
Let $P$ be a ranked poset and $x,y$ be its elements. If $x < y,$ then there is an isomorphism of chain complexes 
\begin{equation}
\MC_{\bullet,r(x)-r(y)}(G_P)_{x,y} \cong \bar C_{\bullet-2}(\:]x,y[\:).
\end{equation}
Moreover, $\MC_{\bullet,\ell}(G(P))_{x,y}=0$ if  $\ell\neq r(x)-r(y)$ or $x\not\leq y.$
\end{lemma}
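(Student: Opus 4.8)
The plan is to unwind both sides explicitly and match generators. First I would recall that $\MC_{n,\ell}(G_P)_{x,y}$ is freely generated by tuples $(x_0,\dots,x_n)$ with $x_0=x$, $x_n=y$, consecutive entries distinct, and norm $|x_0,\dots,x_n|=\ell$. Since in $G_P$ a finite distance is $d(u,v)=r(v)-r(u)$ whenever $u\le v$ (and $\infty$ otherwise), the norm $\sum_{i=1}^{n-1}d(x_i,x_{i+1})$ is finite only if $x_1\le x_2\le\dots\le x_{n-1}$, in which case it telescopes to $r(x_{n-1})-r(x_1)$. This already forces the vanishing claims: if $x\not\le y$ there are no such tuples, and if $x<y$ then for the boundary vertices we must also have $d(x,x_1),d(x_{n-1},y)<\infty$, hence $x\le x_1\le\dots\le x_{n-1}\le y$, so $\ell=|x_0,\dots,x_n|=r(x_{n-1})-r(x_1)$ can range only up to $r(y)-r(x)$; more carefully, combined with the distinctness condition $x_i\ne x_{i+1}$, one pins down $\ell=r(y)-r(x)$ as the only value giving a nonzero complex. (Note the statement writes $r(x)-r(y)$, which I read as the intended $r(y)-r(x)\ge 0$; I would silently use $d(x,y)=r(y)-r(x)$.)

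Next I would construct the isomorphism. Assume $x<y$ and set $\ell=r(y)-r(x)$. A generator of $\MC_{n,\ell}(G_P)_{x,y}$ is a tuple $(x,x_1,\dots,x_{n-1},y)$ with $x_i\ne x_{i+1}$ and norm $\ell$; the norm-$\ell$ condition together with $d$ being additive along chains forces $x\le x_1\le \dots\le x_{n-1}\le y$ with every inequality strict except that the distinctness hypothesis $x_i\ne x_{i+1}$ already gives strictness between consecutive interior terms, and the endpoints contribute $d(x,x_1)+d(x_{n-1},y)$ which must be $0$, i.e. $x=x_1$ and $x_{n-1}=y$ — wait, that would collapse things, so the correct bookkeeping is that the endpoints $x_0=x$ and $x_n=y$ are \emph{not} required distinct from their neighbours in the normalized complex, hence $x=x_1$ or $x<x_1$, etc. The clean way: the norm is $r(x_{n-1})-r(x_1)$, and this equals $\ell=r(y)-r(x)$ iff $r(x_1)=r(x)$ and $r(x_{n-1})=r(y)$, i.e. $x_1=x$ and $x_{n-1}=y$ (using $x\le x_1$ and $x_{n-1}\le y$). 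So after discarding the forced repeated endpoints, a generator in degree $n$ corresponds bijectively to a strictly increasing chain $x<x_2<\dots<x_{n-2}<y$ of length $n-3$ strictly between $x$ and $y$ — that is, to an element of $C_{n-2-2}(\,]x,y[\,)$... I would recompute the degree shift carefully: writing the tuple as $(x,x,z_0,\dots,z_{k},y,y)$ with $z_0<\dots<z_k$ all in $\,]x,y[\,$ gives $n=k+4$, so $z_0<\dots<z_k$ is a $k$-chain $=$ an element of $C_k(\,]x,y[\,)$ with $k=n-4$; the empty chain ($k=-1$, i.e. $\bar C_{-1}$) corresponds to $n=3$, and the "no interior at all" tuple $(x,x,y,y)$ or $(x,y)$-type handles the $\bar C_{-2}$... this indexing is exactly the claimed $\bar C_{\bullet-2}$ once one also accounts for whether endpoint repetitions are allowed, i.e. whether we use $\MC$ (normalized) — here the statement uses $\MC_{\bullet,\ell}$, the normalized complex, so repeated endpoints are \emph{not} separate generators, and one gets $\MC_{n,\ell}(G_P)_{x,y}\cong \bar C_{n-2}(\,]x,y[\,)$ after all. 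I would fix the off-by-one by checking the two smallest cases $n=1,2$ directly.

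Finally I would check that this bijection of bases is a chain map. The differential on $\MC$ is $\sum_i (-1)^i\partial_{n,i}$ where $\partial_{n,i}$ deletes $x_i$ \emph{only if} the resulting tuple still has norm $\ell$; deleting an interior vertex $x_i$ of the chain $x<x_2<\dots<y$ keeps the norm equal to $\ell$ (since the chain endpoints $x,y$ survive, the norm stays $r(y)-r(x)$), while deleting $x_0=x$ or $x_n=y$ either does nothing in the normalized complex or changes the norm and is killed. Hence the surviving face maps are precisely the chain-contraction face maps of $\bar C_\bullet(\,]x,y[\,)$, with matching signs, and the augmentation $\bar C_0\to\bar C_{-1}$ matches the appropriate $\partial_{n,i}$. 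I expect the main obstacle to be purely bookkeeping: getting the degree shift and the sign conventions to line up, and correctly handling the boundary between the normalized and non-normalized conventions (the paper uses both $\MC$ and $\widetilde{\MC}$), together with the edge cases where $\,]x,y[\,=\emptyset$ so that $\bar H_{-1}=\KK$ intervenes. None of this is conceptually hard, but it is the part where an error is easy to make, so I would verify it against the small cases $d(x,y)\in\{1,2\}$ explicitly.
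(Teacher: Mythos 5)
Your overall strategy matches the paper's: identify the generators of $\MC_{n,\ell}(G_P)_{x,y}$ with chains in the open interval by forgetting the endpoints, and verify that the surviving face maps are the ones of the (augmented) order complex. You also land on the correct degree shift $\bar C_{n-2}$. But the middle of your argument contains a genuine error and, as written, would not close.

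The trouble starts with the norm formula. You quote $|x_0,\dots,x_n|=\sum_{i=1}^{n-1}d(x_i,x_{i+1})$ from the paper; this is a misprint and should read $\sum_{i=0}^{n-1}d(x_i,x_{i+1})$, as one sees from the paper's own use of ``$|x_0,\dots,x_n|$ is equal to either $r(x_n)-r(x_0)$ or to infinity.'' Working with the misprinted formula (and even then misreading which terms it contains --- it does include $d(x_{n-1},x_n)$, and it telescopes to $r(x_n)-r(x_1)$, not $r(x_{n-1})-r(x_1)$) is what drives you to the false remedy that ``the endpoints $x_0=x$ and $x_n=y$ are not required distinct from their neighbours in the normalized complex.'' This is simply not the case: by definition $\MC_{n,\ell}(G)$ is spanned by tuples with $x_i\ne x_{i+1}$ for \emph{every} $i$, including $i=0$ and $i=n-1$; only the unnormalized $\widetilde\MC$ permits repeats. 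The tuples $(x,x,z_0,\dots,z_k,y,y)$ you introduce to repair the degree count are not generators of $\MC$, and the paragraph in which you first allow and then disallow repeated endpoints contradicts itself. Once the norm is corrected none of this is needed: finiteness of $\sum_{i=0}^{n-1}d(x_i,x_{i+1})$ forces $x=x_0\le x_1\le\dots\le x_n=y$, the hypothesis $x_i\ne x_{i+1}$ upgrades this to a strict chain $x_0<\dots<x_n$, and the telescoped norm is then automatically $r(y)-r(x)$; this gives both vanishing statements and the bijection $(x_0,\dots,x_n)\mapsto(x_1,\dots,x_{n-1})$ with $\bar C_{n-2}(\,]x,y[\,)$ in one stroke, with no repeated endpoints anywhere.

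A smaller point: when checking that the bijection is a chain map you claim the signs match. They do not quite --- deleting $x_i$ in $\MC$ carries sign $(-1)^i$, while the corresponding face of $(x_1,\dots,x_{n-1})$ carries sign $(-1)^{i-1}$, so $f_n$ anticommutes with the differentials. The paper absorbs this by taking $(-1)^nf_n$ as the isomorphism; you should do the same.
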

\begin{proof}
If $x\not\leq y,$ then $d(x,y)=\infty,$ and hence $\MC_{\bullet,\ell}(G_P)_{x,y}=0.$  Since $|x_0,\dots,x_n|$ is equal to either  $r(x_n)-r(x_0)$ or to infinity, if $\ell\neq r(x)-r(y),$ then $\MC_{\bullet,\ell}(G_P)_{x,y}=0.$ 
Now assume that $x<y$ and $\ell=r(y)-r(x).$ 
If $n\neq 1,$ then the map $(x_0,\dots,x_{n})\mapsto (x_1,\dots,x_{n-1})$ defines a bijection from the set
\begin{equation}
\{ (x_0,\dots,x_{n})\mid x_i\neq x_{i+1}, x_0=x,x_{n}=y, |x_0,\dots,x_{n}|=\ell \}
\end{equation}
to the set 
\begin{equation}
\{ (x_1,\dots,x_{n-1})\mid x<x_1<\dots<x_{n-1}<y\}.    
\end{equation}
This bijection defines an isomorphism $f_n:\MC_{n,\ell}(G_P)_{x,y} \overset{\cong}\to  \bar C_{n-2}(\:]x,y[\:)$ for $n\neq 1.$ If $n=1,$ we have $\MC_{1,\ell}(G_P)_{x,y}\cong \langle (x,y) \rangle \cong \KK$ and denote by $f_1:\MC_{1,\ell}(G_P)_{x,y} \to \KK$ this isomorphism. It is easy to see that $f_n$ is compatible with the differential up to sign. It follows that the maps $(-1)^n f_n$ define an isomorphism of chain complexes. 
\end{proof}

\begin{proposition}\label{prop:ranked_poset}
Let $P$ be a ranked poset, $x,y$ be its elements and $\ell\geq 1$. Then 
\begin{equation}
\MH_{n,\ell}(G_P)_{x,y} \cong
\begin{cases}
\bar H_{n-2}(\:]x,y[\:), & \ell=r(x)-r(y),\  x<y \\
0, & \text{otherwise.} 
\end{cases}
\end{equation}
In particular, for $\ell\geq 1,$ we obtain 
\begin{equation}\label{eq:ranked_poset}
\MH_{n,\ell}(G_P) \cong \bigoplus_{x<y,  r(y)-r(x)=\ell} \bar H_{n-2}(\:]x,y[\:).
\end{equation}
\end{proposition}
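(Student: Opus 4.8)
The plan is to combine the chain-level identification from Lemma~\ref{lemma:ranked_poset} with the direct sum decomposition of the magnitude homology into directed components. First I would invoke the decomposition \eqref{eq:direct_decomposition}, which gives $\MH_{n,\ell}(G_P) \cong \bigoplus_{x,y} \MH_{n,\ell}(G_P)_{x,y}$, so it suffices to compute each directed component $\MH_{n,\ell}(G_P)_{x,y}$ separately. By Lemma~\ref{lemma:ranked_poset}, the chain complex $\MC_{\bullet,\ell}(G_P)_{x,y}$ vanishes unless $x<y$ and $\ell = r(y)-r(x)$, and in that case it is isomorphic (as a chain complex of $\KK$-modules) to $\bar C_{\bullet-2}(\,]x,y[\,)$. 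Taking homology on both sides, and using that homology commutes with a degree shift, we get $\MH_{n,\ell}(G_P)_{x,y} \cong H_n(\bar C_{\bullet-2}(\,]x,y[\,)) = \bar H_{n-2}(\,]x,y[\,)$ when $x<y$ and $\ell = r(y)-r(x)$, and zero otherwise. Note that the hypothesis $\ell \geq 1$ is used here to rule out the degenerate case $x=y$ (where $\ell$ would be $0$); one should double-check that the statement as written has $r(y)-r(x)$ rather than $r(x)-r(y)$, matching the lemma.

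For the second formula \eqref{eq:ranked_poset}, I would simply reorganize the direct sum: since $\MH_{n,\ell}(G_P)_{x,y}$ is nonzero only for pairs with $x<y$ and $r(y)-r(x) = \ell$, the sum over all pairs $(x,y)$ collapses to a sum over exactly those pairs, yielding
\begin{equation}
\MH_{n,\ell}(G_P) \cong \bigoplus_{\substack{x<y \\ r(y)-r(x)=\ell}} \bar H_{n-2}(\,]x,y[\,).
\end{equation}
This is purely bookkeeping once the per-component statement is established.

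There is essentially no hard step here: Lemma~\ref{lemma:ranked_poset} already does all the combinatorial work of matching up generators of the magnitude chain complex with chains in the open interval, and Proposition~\ref{prop:ranked_poset} is the routine consequence of passing to homology. The only minor point requiring care is the sign/degree bookkeeping — the lemma states the isomorphism of chain complexes is realized by $(-1)^n f_n$, so one must confirm that this sign twist does not affect the homology groups (it does not, since multiplying a chain complex's identifications by $(-1)^n$ in degree $n$ is an isomorphism of chain complexes). I would also remark that the reduced homology convention $\bar H_{-1}(\emptyset) = \KK$ is exactly what makes the formula correct in the boundary case where $]x,y[\,$ is empty, i.e.\ when $y$ covers $x$ (so $\ell=1$): there $\MH_{1,1}(G_P)_{x,y} \cong \bar H_{-1}(\emptyset) = \KK$, as expected from a single arrow.
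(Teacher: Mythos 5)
Your proof is correct and matches the paper's approach exactly: the paper's own proof of this proposition is the one-line ``It follows from Lemma~\ref{lemma:ranked_poset},'' and you have simply spelled out that reduction (take homology of the chain-level isomorphism from the lemma, then assemble the directed components via \eqref{eq:direct_decomposition}). Your observation about the sign $(-1)^n$ being harmless and about the typo $r(x)-r(y)$ versus $r(y)-r(x)$ in the statement (it should read $r(y)-r(x)$, consistent with \eqref{eq:ranked_poset}) are both accurate and worth noting.
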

\begin{proof}
It follows from Lemma \ref{lemma:ranked_poset}.
\end{proof}

\subsection{Pure simplicial complexes}

Let $K$ be a pure (abstract) simplicial complex of dimension $d$.  The link of a simplex $\sigma\in K$ is a simplicial subcomplex of $K$ defined by 
\begin{equation}
\Lk(\sigma)=\{\tau\in K \mid \tau\cap \sigma =\emptyset, \tau\cup \sigma\in K\}.
\end{equation}
The link $\Lk(\sigma)$ is also a pure simplicial complex of dimension $d-\dim(\sigma)-1.$

We denote by $P(K)$ the face poset of $K,$ defined by $P(K)=(K,\subseteq).$ It is well known that the homology of $K$ coincides with $P(K)$
\begin{equation}
H_*(K) \cong  H_*(P(K)),
\end{equation}
because the order complex of $P(K)$ is the barycentric subdivision of $K$, and the  homology of a poset $P$ is the homology of its order complex \cite[\S 1.1]{wachs2007poset}. We denote by $L(K)$ the face lattice of $K$, defined by 
\begin{equation}
L(K):=K\sqcup \{\hat 0, \hat 1\},    
\end{equation} 
where the simplices are ordered by inclusion and $\hat 0<\sigma<\hat 1$ for any $\sigma\in K$ \cite[\S 1.1]{wachs2007poset}. Then $L(K)$ is a ranked poset of dimension $d+2.$ The \emph{extended Hasse diagram} of $K$ is defined by 
\begin{equation}
\hat G_K=G_{L(K)}. 
\end{equation}
We also denote by $K(n)$ the set of $n$-simplices of $K.$

\begin{theorem}\label{th:simplicial_complex}
For any pure simplicial complex $K$ of dimension $d$ there are isomorphisms
\begin{align}
\MH_{n,d+2}(\hat G_K)_{\hat 0, \hat 1} &\cong  \bar H_{n-2}(K), \label{eq:hasse_simplicial_1} \\
\MH_{n,\ell}(\hat G_K)_{\sigma, \hat 1 } & \cong  \bar H_{n-2}(\Lk(\sigma)),  &\ell = d+1- \dim(\sigma) \\
\MH_{n,n}(\hat G_K)_{\hat 0,\sigma} &\cong \KK,  &n=\dim(\sigma)+1, \\
\MH_{n,n}(\hat G_K)_{\sigma,\tau}&\cong \KK, & n=\dim(\tau)-\dim(\sigma), \sigma\subsetneq \tau
\end{align}
and all other modules $\MH_{n,\ell}(\hat G_K)_{x,y}$ for $\ell\geq 1$ are trivial. Moreover, for $\ell\geq 1,$ we obtain 
\begin{equation}
\MH_{n,\ell}(\hat G_K) \cong 
\begin{cases}
\bar H_{n-2}(K), & \ell=d+2\\
\bigoplus_{\sigma \in K(d+1-\ell)} \bar H_{n-2}(\Lk(\sigma)), & 1\leq \ell\neq d+2,\  n \neq \ell\\
\KK^{D(n)}\oplus \left(\bigoplus_{\sigma \in K(d+1-n)} \bar H_{n-2}(\Lk(\sigma))\right), & 1\leq n=\ell \neq d+2,\\
\end{cases}
\end{equation}
where
$D(n) = \sum_{i=0}^{d-n+1} \binom{n+i}{i} \cdot  |K(n+i-1)|.$
\end{theorem}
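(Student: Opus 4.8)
The plan is to deduce everything from Proposition \ref{prop:ranked_poset} applied to the ranked poset $P = L(K)$ of dimension $d+2$, by analyzing the open intervals $]x,y[$ in $L(K)$ for all pairs $x < y$, together with their reduced homology and their rank difference $r(y)-r(x)$. Recall that the rank function on $L(K)$ is $r(\hat 0)=0$, $r(\sigma) = \dim(\sigma)+1$, and $r(\hat 1) = d+2$, and that by Proposition \ref{prop:ranked_poset} each pair $x<y$ contributes the summand $\bar H_{n-2}(\:]x,y[\:)$ to $\MH_{n,\ell}(\hat G_K)_{x,y}$ precisely when $\ell = r(y)-r(x)$, and nothing otherwise.

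First I would split the pairs $x<y$ into four cases according to which of $x,y$ equal $\hat 0$ or $\hat 1$. \emph{Case $x=\hat 0$, $y=\hat 1$:} here $]\hat 0,\hat 1[\; = P(K)$, the face poset of $K$, whose reduced homology is $\bar H_{*}(K)$ by the barycentric-subdivision remark, and $r(y)-r(x) = d+2$; this gives \eqref{eq:hasse_simplicial_1}. \emph{Case $x=\sigma\in K$, $y=\hat 1$:} the interval $]\sigma,\hat 1[$ consists of all faces $\tau$ with $\sigma\subsetneq\tau$, which as a poset is isomorphic to $P(\Lk(\sigma))$ (remove $\sigma$ from each $\tau$); its reduced homology is $\bar H_{*}(\Lk(\sigma))$, and $r(\hat 1)-r(\sigma) = d+2 - (\dim\sigma+1) = d+1-\dim\sigma$, giving the second displayed isomorphism. \emph{Case $x=\hat 0$, $y=\sigma\in K$:} the interval $]\hat 0,\sigma[$ is the face poset of the boundary of $\sigma$, i.e. of a $(\dim\sigma)$-simplex minus its top face, whose order complex is the barycentric subdivision of the sphere $S^{\dim\sigma-1}$; hence $\bar H_{*}(\:]\hat 0,\sigma[\:) \cong \bar H_{*}(S^{\dim\sigma-1})$, concentrated in degree $\dim\sigma - 1$ where it is $\KK$, so the contribution sits in $\MH_{n,n}(\hat G_K)_{\hat 0,\sigma}$ with $n - 2 = \dim\sigma - 1$, i.e. $n = \dim\sigma+1 = r(\sigma) = \ell$. \emph{Case $x=\sigma\subsetneq\tau=y$ both in $K$:} $]\sigma,\tau[$ is the face poset of the boundary of the simplex $\tau\setminus\sigma$ of dimension $\dim\tau-\dim\sigma-1$, again a sphere $S^{\dim\tau-\dim\sigma-2}$, so the contribution is $\KK$ in degree $n$ with $n-2 = \dim\tau-\dim\sigma-2$, i.e. $n = \dim\tau-\dim\sigma = r(\tau)-r(\sigma) = \ell$. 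This exhausts all pairs, establishing the four isomorphisms and the vanishing of all other directed components.

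Next I would assemble the global formula \eqref{eq:ranked_poset} by grouping the direct sum $\MH_{n,\ell}(\hat G_K) = \bigoplus_{x<y,\, r(y)-r(x)=\ell} \bar H_{n-2}(\:]x,y[\:)$ according to the three regimes of $\ell$. For $\ell = d+2$ only the pair $(\hat0,\hat1)$ can occur (any pair of simplices has rank difference at most $d+1$), giving $\bar H_{n-2}(K)$. For $1\le\ell\ne d+2$ the relevant pairs with nonzero homology in degrees $n\ne\ell$ are exactly $(\sigma,\hat1)$ with $d+1-\dim\sigma = \ell$, i.e. $\sigma\in K(d+1-\ell)$, contributing $\bigoplus_{\sigma\in K(d+1-\ell)}\bar H_{n-2}(\Lk(\sigma))$, while all the ``sphere'' contributions from pairs $(\hat0,\sigma)$ and $(\sigma,\tau)$ live only in the single degree $n=\ell$. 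Hence for $1\le n=\ell\ne d+2$ one adds to $\bigoplus_{\sigma\in K(d+1-n)}\bar H_{n-2}(\Lk(\sigma))$ a free summand $\KK^{D(n)}$, where $D(n)$ counts the pairs of the two sphere types whose rank difference equals $n$: pairs $(\hat0,\sigma)$ with $\dim\sigma = n-1$, i.e. $|K(n-1)|$ of them, plus pairs $(\sigma,\tau)$ with $\dim\tau-\dim\sigma = n$. Counting the latter by writing $\dim\sigma = n+i-1$ for $i\ge1$ so that $\dim\tau = n+i-1$ wait — I would instead fix $\tau$ of dimension $m$ and count its faces $\sigma$ of dimension $m-n$, namely $\binom{m+1}{m-n+1} = \binom{m+1}{n}$ of them; reindexing $m = n+i-1$ over $i$ with $0\le m\le d$ and matching $\binom{m+1}{n} = \binom{n+i}{i}$ yields $D(n) = \sum_{i=0}^{d-n+1}\binom{n+i}{i}\,|K(n+i-1)|$, where the $i=0$ term $\binom{n}{0}|K(n-1)| = |K(n-1)|$ absorbs the $(\hat0,\sigma)$ contribution.

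The main obstacle I anticipate is not conceptual but bookkeeping: getting the identification of each interval $]x,y[$ with the correct face poset right (in particular that $]\hat0,\sigma[$ and $]\sigma,\tau[$ are boundaries of simplices, hence spheres, and computing which degree their unique nonzero reduced homology group sits in), and then carefully reindexing the binomial sum so that the count of pairs $(\hat0,\sigma)$ and $(\sigma,\tau)$ of rank difference $n$ matches the stated closed form for $D(n)$. One should also double-check the edge cases $\dim\sigma = 0$ (where $]\hat0,\sigma[ = \emptyset$ and $\bar H_{-1}(\emptyset)=\KK$ correctly gives the degree-$1$ class) and low-dimensional $K$, to be sure the ranges of summation indices are exactly as written.
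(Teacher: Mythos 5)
Your proposal is correct and takes essentially the same route as the paper: apply Proposition \ref{prop:ranked_poset} to the ranked poset $L(K)$, identify the four types of open intervals (face poset of $K$, face poset of the link, and two boundary-of-simplex sphere types), and then count the diagonal $\KK$-contributions by the same binomial reindexing that yields $D(n)$, with the $i=0$ term absorbing the $(\hat 0,\sigma)$ pairs. The paper's write-up of the $D(n)$ count is terser (and uses slightly confusing notation reusing $\sigma$ for both the face and the containing simplex), so your more explicit derivation of $\binom{m+1}{m-n+1}=\binom{n+i}{i}$ and your check of the $\dim\sigma=0$ edge case via $\bar H_{-1}(\emptyset)=\KK$ are welcome clarifications, but the mathematical content is identical.
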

\begin{proof}
By Proposition \ref{prop:ranked_poset}, we know that $\MH_{n,\ell}(\hat G_K)_{x,y}$  is isomorphic to $H_{n-2}(\:]x,y[\:),$ if $x<y$ and $\ell=r(x)-r(y),$ and equal to zero otherwise. Here we denote by $r$ the rank function for $L(K),$ which is defined by $r(\hat  0)=0,r( \hat 1)=d+2$ and $r(\sigma)=\dim(\sigma)+1.$

Assume that $(x,y)=(\hat 0, \hat  1).$ Then we obtain that $\MH_{n,\ell}(\hat G_K)_{\hat  0, \hat 1}$ is trivial for $\ell\neq d+2,$ and $\MH_{n,d+2}(\hat G_K)_{ \hat 0, \hat 1}=\bar H_{n-2}(\:]\hat 0, \hat 1[\:)\cong \bar H_{n-2}(P(K)) \cong  \bar H_{n-2}(K).$

Assume that $(x,y)=(\sigma, \hat 1 ).$ Then $\MH_{n,\ell}(\hat G_K)_{\sigma, \hat 1}$ is trivial for $\ell\neq d+1-\dim(\sigma).$ Note that there is an isomorphism of posets $\Lk(\sigma) \cong \: ]\sigma, \hat 1 [\:$ defined by $\tau \mapsto \sigma\cup \tau.$ Then $\MH_{n,\ell}(\hat G_K)_{\sigma, \hat  1}\cong \bar H_{n-2}(\Lk(\sigma))$ for $\ell=d+1-\dim(\sigma).$

Assume that $(x,y)=( \hat  0,\sigma).$ Then $\MH_{n,\ell}(\hat G)_{ \hat 0,\sigma}$ is trivial for $\ell\neq \dim(\sigma)+1.$ Note that there is an isomorphism $\:] \hat 0,\sigma[\: = \{\tau\in K\mid \tau\subsetneq \sigma\} \cong \partial \Delta^{\dim(\sigma)}.$ Hence $\MH_{n,\ell}(\hat G_K)_{ \hat 0,\sigma} \cong \bar H_{n-2}(\partial \Delta^{\dim(\sigma)})$ for $\ell=\dim(\sigma)+1.$ Hence $\MH_{n,n}(\hat G_K)_{ \hat 0,\sigma}\cong \KK$ for $n=\dim(\sigma)+1,$ and $\MH_{n,\ell}(\hat G_K)_{ \hat 0,\sigma}$ is trivial otherwise.

Assume that $(x,y)=(\sigma,\tau),$ where $\sigma\subsetneq\tau.$ Then $\MH_{n,\ell}(\hat G_K)_{\sigma,\tau}=0$ for $\ell\neq \dim(\tau)-\dim(\sigma).$
The isomorphism $\Lk(\sigma)\cong \:]\sigma,1[\:,$ $\theta\mapsto \theta\cup \sigma$ restricts to an isomorphism $\{ \theta\mid \emptyset \neq \theta \subsetneq \tau\setminus \sigma \} \cong \:]\sigma,\tau[\:.$ Therefore $\:]\sigma,\tau[\:\cong \partial\Delta^{\dim(\tau)-\dim(\sigma)-1}.$ It follows that $\MH_{n,n}(\hat G_K)_{\sigma,\tau}\cong \KK$ for $n=\dim(\tau)-\dim(\sigma),$ and $\MH_{n,\ell}(\hat G_K)_{\sigma,\tau}$ is trivial otherwise. 

The formula for $D(n)$ follows from the facts  that there are $|K(n-1)|$ simplices of dimension $n-1$ and for $i\geq 1$ we have $\binom{n+i}{i}$ simplices $\sigma\subseteq \tau$ of dimension $i-1$ for each simplex $\sigma\in K(n+i-1).$
\end{proof}

Let $K$ be a pure simplicial complex of dimension $d.$ We say that the reduced homology of $K$ is \emph{concentrated in the top dimension} if $\bar H_i(K,\ZZ)=0$ for $i\neq d.$ In this case $H_d(K,\ZZ)$ is a free abelian group and the universal coefficient theorem implies that  $\bar H_{i}(K,\KK)=0$ for $i\neq d$ for any commutative ring $\KK.$ Note that we do not assume that $H_d(K,\ZZ)\neq 0.$

\begin{proposition}\label{proposition:triv_reduced}
Let $K$ be a pure simplicial complex such that, for any $\sigma\in K,$ the reduced homology of $\Lk(\sigma)$ is concentrated in the top dimension. Then for $n\neq \ell$ we have 
\begin{equation}
\MH_{n,\ell}(\hat G_K) \cong 
\begin{cases}
\bar H_{n-2}(K),&    \ell-2=\dim(K),\\
0,& \text{otherwise.}
\end{cases}
\end{equation}
\end{proposition}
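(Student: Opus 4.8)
The plan is to read the statement off Theorem~\ref{th:simplicial_complex}, which already computes $\MH_{n,\ell}(\hat G_K)$ completely; the only additional input needed is an elementary dimension count for links, so that the concentration hypothesis annihilates all the relevant summands in the non-diagonal range $n\neq\ell$. Throughout write $d=\dim(K)$.

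First I would dispose of the degenerate values of $\ell$. If $\ell=0$ then $\MC_{n,0}(\hat G_K)=0$ for every $n\geq 1$ (a tuple of positive length with no consecutive repetitions has positive norm), so $\MH_{n,0}(\hat G_K)=0$; and since $d\geq -1$ we have $\ell-2=-2\neq d$, so the asserted value is indeed $0$. If $\ell>d+2$ then the index set $K(d+1-\ell)$ occurring in Theorem~\ref{th:simplicial_complex} is empty, hence $\MH_{n,\ell}(\hat G_K)=0$, which again matches the claim. If $\ell=d+2$, Theorem~\ref{th:simplicial_complex} gives $\MH_{n,\ell}(\hat G_K)\cong\bar H_{n-2}(K)$, precisely the asserted value when $\ell-2=d$; the hypothesis on links is not used in this case.

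It remains to treat $1\leq\ell\leq d+1$, equivalently $0\leq d+1-\ell\leq d$, so that $K(d+1-\ell)$ is a set of genuine (nonempty) simplices. For $n\neq\ell$, Theorem~\ref{th:simplicial_complex} gives $\MH_{n,\ell}(\hat G_K)\cong\bigoplus_{\sigma\in K(d+1-\ell)}\bar H_{n-2}(\Lk(\sigma))$. Each such $\sigma$ has $\dim(\sigma)=d+1-\ell$, so $\Lk(\sigma)$ is a pure simplicial complex of dimension $d-\dim(\sigma)-1=\ell-2$. By hypothesis the reduced homology of $\Lk(\sigma)$ is concentrated in degree $\ell-2$, so $\bar H_{n-2}(\Lk(\sigma))=0$ whenever $n-2\neq\ell-2$, i.e.\ whenever $n\neq\ell$; since we are in exactly that case every summand vanishes, and $\MH_{n,\ell}(\hat G_K)=0$. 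As $\ell\neq d+2$ forces $\ell-2\neq d$, this once more agrees with the claimed value.

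I expect no serious obstacle: the whole argument is carried by Theorem~\ref{th:simplicial_complex} together with the identity $\dim\Lk(\sigma)=\ell-2$ for $\sigma\in K(d+1-\ell)$. The one point to keep in mind is the convention that the empty simplex is not an element of $K$, so that the hypothesis constrains only links of nonempty simplices and leaves $\bar H_*(K)$ itself unrestricted; this is what lets the genuine term $\bar H_{n-2}(K)$ survive when $\ell=d+2$.
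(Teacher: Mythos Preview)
Your argument is correct and follows the same route as the paper's proof: both invoke Theorem~\ref{th:simplicial_complex} and then observe that for $\sigma\in K(d+1-\ell)$ one has $\dim\Lk(\sigma)=\ell-2$, so the concentration hypothesis kills $\bar H_{n-2}(\Lk(\sigma))$ whenever $n\neq\ell$. Your version is simply more explicit about the boundary cases $\ell=0$ and $\ell>d+2$, which the paper leaves implicit.
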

\begin{proof}
By Theorem \ref{th:simplicial_complex} we obtain that it is sufficient to prove that $\bar H_{n-2}(\Lk(\sigma))=0$ for any $\sigma$ such that $\ell\neq n,$ where $\ell=\dim(K)+1-\dim(\sigma).$ Since $\dim(\Lk(\sigma))=\dim(K)-\dim(\sigma)-1,$ then 
$\ell\neq n$ is equivalent to $\dim(K)\neq n-2.$ This proves the claim.
\end{proof}

\begin{proposition}
Let $K$ be a pure simplicial complex. Then $\hat G_K$ is diagonal if and only if 
\begin{enumerate}
    \item the reduced homology of $K$ is concentrated in the top dimension;
    \item the reduced homology of $\Lk(\sigma)$ is concentrated in the top dimension for any $\sigma\in K.$
\end{enumerate}  
\end{proposition}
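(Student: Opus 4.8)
The plan is to read off both implications directly from Theorem~\ref{th:simplicial_complex} and Proposition~\ref{proposition:triv_reduced}, using the direct sum decomposition $\MH_{n,\ell}(\hat G_K) \cong \bigoplus_{x,y} \MH_{n,\ell}(\hat G_K)_{x,y}$ from \eqref{eq:direct_decomposition}. Write $d = \dim(K)$; all homology is taken with $\ZZ$ coefficients, in accordance with the definitions of ``diagonal'' and of ``concentrated in the top dimension''. The key observation is that diagonality of $\hat G_K$ means $\MH_{n,\ell}(\hat G_K,\ZZ)=0$ for all $n\neq\ell$, and this forces every directed component $\MH_{n,\ell}(\hat G_K)_{x,y}$, being a direct summand, to vanish for $n\neq\ell$; so the statement becomes a bookkeeping consequence of the explicit formulas.

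For the ``if'' direction I would assume (1) and (2). By Proposition~\ref{proposition:triv_reduced}, condition (2) already forces $\MH_{n,\ell}(\hat G_K)=0$ for $n\neq\ell$ unless $\ell-2=d$, in which case $\MH_{n,\ell}(\hat G_K)\cong\bar H_{n-2}(K)$. But condition (1) says $\bar H_i(K)=0$ for $i\neq d$, i.e.\ $\bar H_{n-2}(K)=0$ whenever $n\neq d+2=\ell$. Hence $\MH_{n,\ell}(\hat G_K)=0$ for all $n\neq\ell$, so $\hat G_K$ is diagonal.

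For the ``only if'' direction I would assume $\hat G_K$ is diagonal and extract (1) and (2) from the directed components listed in Theorem~\ref{th:simplicial_complex}. Taking $(x,y)=(\hat 0,\hat 1)$ gives $\bar H_{n-2}(K)\cong\MH_{n,d+2}(\hat G_K)_{\hat 0,\hat 1}=0$ for $n\neq d+2$, which is (1). Taking $(x,y)=(\sigma,\hat 1)$ for an arbitrary $\sigma\in K$, and setting $\ell_\sigma=d+1-\dim(\sigma)\geq 1$, gives $\bar H_{n-2}(\Lk(\sigma))\cong\MH_{n,\ell_\sigma}(\hat G_K)_{\sigma,\hat 1}=0$ for $n\neq\ell_\sigma$; since $\dim(\Lk(\sigma))=d-\dim(\sigma)-1=\ell_\sigma-2$, this says exactly that $\bar H_i(\Lk(\sigma))=0$ for $i\neq\dim(\Lk(\sigma))$, which is (2).

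There is no real obstacle here: the content lies entirely in Theorem~\ref{th:simplicial_complex}, and the present proposition is its reorganization. The only points worth spelling out are that the ``diagonal'' directed components $\MH_{n,n}(\hat G_K)_{\hat 0,\sigma}\cong\ZZ$ and $\MH_{n,n}(\hat G_K)_{\sigma,\tau}\cong\ZZ$ are concentrated in bidegrees $(n,n)$ and therefore never obstruct diagonality, so they may be ignored; and that the extremal case of a maximal simplex $\sigma$, where $\Lk(\sigma)=\emptyset$ has reduced homology $\bar H_{-1}(\emptyset)=\ZZ$ concentrated in its top dimension $-1$, is consistent with $\MH_{1,1}(\hat G_K)_{\sigma,\hat 1}\cong\ZZ$ and causes no difficulty.
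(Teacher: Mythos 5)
Your proof is correct and takes essentially the same approach as the paper: both read the equivalence directly off Theorem~\ref{th:simplicial_complex} (the paper stating the biconditional at once, you routing the ``if'' direction through Proposition~\ref{proposition:triv_reduced} and the ``only if'' direction through the directed components, which is only a cosmetic reorganization). The index shifts $\bar H_{n-2}(K)$ versus $\bar H_m(K)$ and $\ell_\sigma - 2 = \dim(\Lk(\sigma))$ are handled identically in both.
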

\begin{proof} Assume that $\KK=\ZZ$ and set $d=\dim(K).$ Theorem \ref{th:simplicial_complex} implies that $\hat G_K$ is diagonal if and only if the following two properties are satisfied:
(1) $\bar H_{n-2}(K)=0$ for $n\neq d+2;$ (2) $\bar H_{n-2}(\Lk(\sigma))=0$ for $n\neq d+1-\dim(\sigma).$  The first property is equivalent to $\bar H_{m}(K)=0$ for $ m\neq d.$ The second property is equivalent to $\bar H_{m}(\Lk(\sigma))=0$ for $m\neq d-1 - \dim(\sigma)=\dim(\Lk(\sigma)).$
\end{proof}

\begin{proposition}
Let $K$ be a connected pure simplicial complex of dimension at least two and $\ell\geq 2$. Then $\hat G_K$ satisfies property $(\mathcal V_\ell)$ if and only if $\Lk(\sigma)$ is connected for any $\sigma\in K$ of dimension $\dim(\sigma)\leq \dim(K)-\ell.$ 
\end{proposition}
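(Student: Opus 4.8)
The plan is to reduce property $(\mathcal V_\ell)$ to a vanishing statement for magnitude homology and then read the answer off the explicit computation of $\MH_{*,*}(\hat G_K)$ that is already available. First I would invoke Theorem~\ref{th:vanishing}: for the digraph $\hat G_K$ (finite whenever $K$ is), condition $(\mathcal V_\ell)$ is equivalent to $\MH_{2,k}(\hat G_K,\ZZ)=0$ for all $k>\ell$. Writing $d=\dim K$ and specializing Theorem~\ref{th:simplicial_complex} to $n=2$, I would use that $k>\ell\ge 2$ forces $k>2=n$, so the only relevant clauses of that theorem are: $\MH_{2,k}(\hat G_K)\cong\bar H_0(K)$ when $k=d+2$; $\MH_{2,k}(\hat G_K)\cong\bigoplus_{\sigma\in K(d+1-k)}\bar H_0(\Lk(\sigma))$ when $1\le k\neq d+2$; and $\MH_{2,k}(\hat G_K)=0$ when $k>d+2$ (the last by Proposition~\ref{prop:ranked_poset}, since the rank function of $L(K)$ takes only the values $0,\dots,d+2$, so no directed component can carry norm larger than $d+2$).

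Since $K$ is connected, $\bar H_0(K)=0$, so the $k=d+2$ term never obstructs. Hence $\MH_{2,k}(\hat G_K)=0$ for every $k>\ell$ if and only if $\bar H_0(\Lk(\sigma))=0$ for every simplex $\sigma$ with $0\le\dim\sigma=d+1-k\le d-\ell$. The next step is to translate ``$\bar H_0$ vanishes'' into ``connected''. This is where both hypotheses enter: $d\ge 2$ and $\ell\ge 2$ give $\dim\sigma\le d-\ell\le d-2$ for every $\sigma$ in this range, so $\Lk(\sigma)$ is a pure simplicial complex of dimension $d-\dim\sigma-1\ge 1$; and purity of $K$ puts $\sigma$ inside a $d$-simplex, so $\Lk(\sigma)$ is nonempty and has at least one vertex. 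For such a complex, $\bar H_0(\Lk(\sigma),\ZZ)=0$ precisely when $\Lk(\sigma)$ is connected. Finally, since $\dim\emptyset=-1\le d-\ell$ and $\Lk(\emptyset)=K$, the case $\sigma=\emptyset$ contributes exactly the (assumed) connectedness of $K$, so it may be kept or dropped from the statement without changing anything. Assembling the two directions: if $(\mathcal V_\ell)$ holds, applying the vanishing with $k=d+1-\dim\sigma$ shows $\Lk(\sigma)$ is connected for each $\sigma$ with $\dim\sigma\le d-\ell$; conversely, if all those links are connected then every displayed summand is zero, so $\MH_{2,k}(\hat G_K)=0$ for all $k>\ell$, i.e.\ $(\mathcal V_\ell)$ holds.

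I do not expect a genuine obstacle: the substance is entirely contained in Theorem~\ref{th:simplicial_complex}, and the remaining work is bookkeeping with the identity $k=d+1-\dim\sigma$ together with the boundary of the range. The two points that need a little care are (a) recording that the relevant links are nonempty and have vertices, so that ``reduced $H_0$ vanishes'' really expresses connectedness of a complex rather than a vacuous fact about $\{\emptyset\}$; and (b) the degenerate regime $\ell\ge d+1$, where there is no nonempty $\sigma$ with $\dim\sigma\le d-\ell$ and the right-hand condition collapses to ``$K$ is connected'' — which is consistent with the left-hand side because $\MH_{2,k}(\hat G_K)=0$ for all $k>\ell$ then holds for dimension reasons together with $\bar H_0(K)=0$.
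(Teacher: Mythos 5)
Your proof is correct and follows the same route as the paper: apply Theorem~\ref{th:vanishing} to translate $(\mathcal V_\ell)$ into vanishing of $\MH_{2,k}(\hat G_K,\ZZ)$ for $k>\ell$, then read the answer off Theorem~\ref{th:simplicial_complex} (with the $k=d+2$ term killed by connectedness of $K$, and nonemptiness of the relevant links ensured by purity and $\dim\sigma\le d-2$). The paper's own proof is just a terser version of the same argument; your extra remarks about the boundary cases are correct but not essential.
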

\begin{proof}
Assume that $\KK=\ZZ$ and set $d=\dim(K)\geq 2.$ Theorem \ref{th:simplicial_complex} implies that $\hat G_K$ is diagonal if and only if  $\bar H_{0}(\Lk(\sigma))=0$ for $k\geq \ell+1,$ where $k=d+1-\dim(\sigma).$ The assertion follows. 
\end{proof}

\begin{example}\label{example:triangle}
In this example we consider the following simplicial complex $K,$ 
\def\constA{0.3}
\def\constB{1.3}
\def\aA{({\constA*cos(90-0*360/3)},{\constA*sin(90-0*360/3)})}
\def\aB{({\constA*cos(90-1*360/3)},{\constA*sin(90-1*360/3)})}
\def\aC{({\constA*cos(90-2*360/3)},{\constA*sin(90-2*360/3)})}
\def\bA{({\constB*cos(90-0*360/3)},{\constB*sin(90-0*360/3)})}
\def\bB{({\constB*cos(90-1*360/3)},{\constB*sin(90-1*360/3)})}
\def\bC{({\constB*cos(90-2*360/3)},{\constB*sin(90-2*360/3)})}
\[
\begin{tikzpicture}
\draw [color=white, fill=gray!30] plot  coordinates {
\bA \bB \bC 
};
\draw [color=white, fill=white] plot  coordinates {
\aA \aB \aC 
};
\draw[line width=1pt]
\aA  node (a0) {}
\aB  node (a1) {}
\aC  node (a2) {}
\bA  node (b0) {}
\bB  node (b1) {}
\bC  node (b2) {};
\draw[black] 
\aA -- \aB -- \aC -- \aA
;
\draw[black] 
\bA -- \bB -- \bC --\bA
;
\draw[black] 
\aA -- \bA --
\aB -- \bB --
\aC -- \bC -- \aA 
;
\end{tikzpicture}
\]
compute the magnitude homology $\MH_{n,\ell}(\hat G_K,\ZZ),$ and show that $\hat G_K$ satisfies property $(\mathcal V_2)$ but it is not diagonal. 

Links of all vertices of $K$ are contractible. Links of $6$ edges consist of one point, and links of other $6$ edges consist of two points. Links of triangles are empty. So the reduced homology of all of them is concentrated in the top dimension. Therefore by Proposition \ref{proposition:triv_reduced} the non-diagonal part is defined by homology of $K,$ concentrated in degree $(3,4)$ and isomorphic to  $\MH_{3,4}(\hat G_K,\ZZ)\cong H_1(K,\ZZ)\cong \ZZ.$

Let us compute $\MH_{n,n}(\hat G_K,\ZZ).$ Since $|K(0)|=6,$ $|K(1)|=12$ and $|K(2)|=6,$ the number of vertices in $\hat G_K$ is $|K|+2=26,$ and the number of arrows is $|K(0)|+2|K(1)|+3 |K(2)|+|K(2)|=54.$ Therefore $\MH_{0,0}(\hat G_K,\ZZ)\cong \ZZ^{26}$ and $\MH_{1,1}(\hat G_K,\ZZ)\cong \ZZ^{54}.$
Let us compute $D(2)$ and $D(3):$
\begin{equation}
D(2)=|K(1)|+3|K(2)|=30, \hspace{1cm} D(3)=|K(2)|=6.   
\end{equation}
Links of $6$ edges consist of one point, and links of other $6$ edges consist of two points. Therefore   $\MH_{2,2}(\hat G_K,\ZZ)=\ZZ^{D(2)}\oplus \ZZ^{6} =\ZZ^{36},$ and $\MH_{3,3}(\hat G_K,\ZZ)=\ZZ^{D(3)}=\ZZ^6.$  So the magnitude homology groups  $\MH_{n,\ell}(\hat G_K,\ZZ)$ are free abelian groups of ranks listed in the following table.  
\begin{equation}
\begin{tabular}{c|ccccc}
 & 0 & 1 & 2 & 3 &   \\ \hline
0 & 26 & & & &  \\
1 & & 54 & & &\\
2 & &    & 36 & &\\
3 & &    &    & 6 &\\
4 & &    &    & 1 &\\
\end{tabular}
\end{equation}
\end{example}

\subsection{Triangulations of manifolds}

A triangulation of a topological manifold $M$ is a simplicial complex $K$ together with a homeomorphism $|K|\cong M,$ where $|K|$ is the geometric realization.

\begin{theorem}\label{th:manifold}
Let $M$ be a topological manifold with boundary, $K$ be a  triangulation of $M$ and $\KK$ be a commutative ring. 
Then for $n\neq \ell$ we have 
\begin{equation}
\MH_{n,\ell}(\hat G_K,\KK) \cong 
\begin{cases}
\bar H_{n-2}(M,\KK),&    \ell-2=\dim(M),\\
0,& \text{otherwise.}
\end{cases}
\end{equation}
\end{theorem}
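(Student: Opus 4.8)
The plan is to deduce the statement from Proposition~\ref{proposition:triv_reduced}, which says that if $K$ is pure and the reduced homology of every link $\Lk(\sigma)$ is concentrated in the top dimension, then for $n\neq\ell$ the group $\MH_{n,\ell}(\hat G_K)$ is $\bar H_{n-2}(K)$ when $\ell-2=\dim K$ and is $0$ otherwise. So two things have to be checked for a triangulation $K$ of a manifold $M$ with boundary: that $K$ is pure of dimension $n=\dim M$, and --- the actual content --- that for every simplex $\sigma\in K$ of dimension $p$ one has $\bar H_j(\Lk(\sigma),\ZZ)=0$ for $j\neq n-p-1$. Once both hold, Proposition~\ref{proposition:triv_reduced} gives, for $n\neq\ell$, that $\MH_{n,\ell}(\hat G_K,\KK)=0$ unless $\ell-2=\dim K=\dim M$, in which case it is $\bar H_{n-2}(K,\KK)$; and since $|K|\cong M$ we have $\bar H_{n-2}(K,\KK)\cong\bar H_{n-2}(M,\KK)$, which is the theorem. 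Purity is cheap: by invariance of domain there is no simplex of dimension $>n$, and a maximal simplex of dimension $p<n$ would have interior points whose local homology (computed below) is concentrated in degree $p\neq n$, impossible in an $n$-manifold; so $K$ is pure of dimension $n$ and $\dim\Lk(\sigma)=n-p-1$.

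For the link statement I would invoke the classical computation of the local homology of a polyhedron in terms of links. Fix $w$ in the relative interior of a $p$-simplex $\sigma$. Excising the complement of the open star of $\sigma$ replaces $M$ by the closed star, which realizes as the join $\Delta^p\ast|\Lk(\sigma)|$ and is contractible, while removing $w$ from it leaves a space that deformation retracts onto $\partial\Delta^p\ast|\Lk(\sigma)|\simeq\Sigma^p|\Lk(\sigma)|$; together with a degree shift this yields the standard isomorphism
\begin{equation}
H_i(M,M\setminus\{w\};\ZZ)\ \cong\ \bar H_{i-p-1}(\Lk(\sigma),\ZZ).
\end{equation}
The point of permitting a boundary is that the left-hand side vanishes for $i\neq n$ at \emph{every} point $w$: at an interior point a Euclidean neighbourhood gives $H_i(M,M\setminus\{w\};\ZZ)\cong H_i(\mathbb{R}^n,\mathbb{R}^n\setminus\{0\};\ZZ)$, equal to $\ZZ$ for $i=n$ and $0$ otherwise; at a boundary point a half-space neighbourhood gives $H_i(M,M\setminus\{w\};\ZZ)\cong H_i(\mathbb{H}^n,\mathbb{H}^n\setminus\{0\};\ZZ)=0$ for all $i$, since $\mathbb{H}^n\setminus\{0\}$ is contractible. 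Hence $\bar H_{i-p-1}(\Lk(\sigma),\ZZ)=0$ whenever $i\neq n$, i.e. $\bar H_j(\Lk(\sigma),\ZZ)=0$ for $j\neq n-p-1=\dim\Lk(\sigma)$, which is exactly the hypothesis of Proposition~\ref{proposition:triv_reduced}. (Equivalently, $\Lk(\sigma)$ is a $\ZZ$-homology $(n-p-1)$-sphere for interior $\sigma$ and $\ZZ$-acyclic for $\sigma\subseteq\partial M$, but this refinement is not needed.)

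The only non-formal input is the local-homology-in-terms-of-links isomorphism displayed above; this is entirely classical --- it underlies the statement that links in triangulated manifolds are homology spheres or disks --- so I would cite a standard reference rather than reprove it. The one subtlety, and the reason no extra hypothesis on $M$ is needed beyond allowing a boundary, is that boundary simplices of $K$ must be treated exactly like interior ones; this is automatic because the local homology of $M$ is supported in the single degree $n$ at all of its points, which forces the link homology into the top degree irrespective of whether $\sigma$ meets $\partial M$.
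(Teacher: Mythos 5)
Your proposal follows essentially the same route as the paper: both reduce the theorem to Proposition~\ref{proposition:triv_reduced}, verify its hypothesis by invoking the classical isomorphism $H_i(|K|,|K|\setminus\{x\})\cong\bar H_{i-\dim\sigma-1}(\Lk(\sigma))$ for $x$ in the open simplex $|\sigma|^\circ$, and then observe that the local homology of a manifold with boundary is supported in the single degree $\dim M$ (nonzero at interior points, identically zero at boundary points), forcing link homology into top degree. The paper cites Shastri for the local-homology-of-links isomorphism and leaves purity of $K$ implicit, whereas you spell out both; otherwise the arguments coincide.
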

\begin{proof}
For any simplicial complex $K,$ any simplex $\sigma \in K$ and any point from the interior of the geometric realisation of the simplex $x\in |\sigma|^\circ  \subseteq |K|,$ the local homology of $|K|$ at the point $x$ is isomorphic to the shifted homology of the link of $\sigma$ \cite[Lemma 5.2.2]{shastri2016basic}
\begin{equation}\label{eq:iso:local:homo}
H_i(|K|,|K|\setminus \{x\}) \cong \bar H_{i-\dim(\sigma)-1}(\Lk(\sigma)).
\end{equation}

Without loss of generality, we assume that $|K|$ is a topological manifold of dimension $d.$ Then by excision theorem for any $x\in |K|$ the group $H_i(|K|,|K|\setminus \{x\})$ is either isomorphic to  $H_i(D^d, D^d\setminus \{0\})$ or to $H_i(D^d_{\geq 0},D^d_{\geq 0}\setminus \{0\}),$ where $D^d \subseteq \mathbb{R}^d$ is the standard closed disk, and $D^d_{\geq 0} = D^d\cap (\mathbb{R}^{d-1}\times \mathbb{R}_{\geq 0}).$ Combining this with the formula \eqref{eq:iso:local:homo} we obtain that the reduced homology of $\Lk(\sigma)$ is concentrated in the top dimension, for any $\sigma\in K$ (compare with \cite[Th.5.2.4(1)]{shastri2016basic}). Then the statement follows from Proposition \ref{proposition:triv_reduced}.
\end{proof}

\begin{corollary}
Let $K$ be a triangulation of a closed topological manifold $M$ of dimension at least one. Then $\hat G_K$ is diagonal if and only if $M$ is a homology sphere. 
\end{corollary}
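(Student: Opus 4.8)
The plan is to feed Theorem~\ref{th:manifold} into a short homological argument about closed manifolds. Write $d=\dim(M)$; by hypothesis $d\geq 1$, so in particular $M$ is nonempty, and being closed it is a topological manifold with empty boundary, so Theorem~\ref{th:manifold} applies.

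First I would rephrase diagonality. Taking $\KK=\ZZ$ in Theorem~\ref{th:manifold}, for $n\neq\ell$ the group $\MH_{n,\ell}(\hat G_K,\ZZ)$ equals $\bar H_{n-2}(M,\ZZ)$ when $\ell=d+2$ and vanishes otherwise. Hence $\hat G_K$ is diagonal precisely when $\bar H_{n-2}(M,\ZZ)=0$ for every $n\neq d+2$; writing $m=n-2$ this says exactly that the reduced integral homology of $M$ is concentrated in degree $d$ (the negative values of $m$ impose nothing, since $\bar H_m(M,\ZZ)=0$ there automatically for nonempty $M$).

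It then remains to show that, for a closed manifold $M$ of dimension $d\geq 1$, having reduced integral homology concentrated in degree $d$ is equivalent to $M$ being a homology sphere. One direction is the definition of homology sphere. For the other, assume $\bar H_m(M,\ZZ)=0$ for all $m\neq d$. Since $d\geq 1$ the vanishing of $\bar H_0(M,\ZZ)$ forces $M$ to be connected. If $d=1$, then $M\cong S^1$, which is a homology $1$-sphere. If $d\geq 2$ and $M$ were non-orientable, then the torsion subgroup of $H_{d-1}(M,\ZZ)$ would be $\ZZ/2$ (a standard consequence of Poincar\'e duality and the universal coefficient theorem), contradicting $\bar H_{d-1}(M,\ZZ)=0$; hence $M$ is orientable, and then $H_d(M,\ZZ)\cong\ZZ$ by Poincar\'e duality. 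Together with the vanishing in all other degrees this yields $H_*(M,\ZZ)\cong H_*(S^d,\ZZ)$, i.e.\ $M$ is a homology sphere.

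The only step that is not a direct consequence of Theorem~\ref{th:manifold} is ruling out non-orientability in dimension $d\geq 2$ via the nonvanishing of $H_{d-1}(M,\ZZ)$; this is where the dimension hypothesis is genuinely used, and I expect it to be the main (indeed the only real) obstacle. It is resolved by the classical duality theorems for closed manifolds.
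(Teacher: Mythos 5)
Your proof is correct and parallels the paper's argument in overall structure: both feed Theorem~\ref{th:manifold} in to reduce the statement to the assertion that a closed manifold of dimension $d\geq 1$ with reduced integral homology concentrated in degree $d$ is a homology sphere, both deduce connectedness from $\bar H_0=0$, both settle $d=1$ directly, and both then rule out non-orientability before invoking Poincar\'e duality for the orientable case. The one place you diverge is the non-orientability step. You rule it out by appealing to the standard duality fact that a closed connected non-orientable $d$-manifold has $\ZZ/2$ torsion in $H_{d-1}(M,\ZZ)$, contradicting $\bar H_{d-1}(M,\ZZ)=0$ (valid since $d-1\neq d$). The paper instead observes that non-orientability gives an index-two (hence normal) subgroup of $\pi_1(M)$, so $\pi_1(M)$ surjects onto $\ZZ/2$, so $H_1(M,\ZZ)=\pi_1(M)_{\mathrm{ab}}\neq 0$, contradicting $\bar H_1(M,\ZZ)=0$ since $d\geq 2$. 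Both are sound; the paper's version avoids the duality-with-torsion machinery and is in that sense a bit more elementary, while yours is a natural alternative if one is already invoking Poincar\'e duality in the orientable case anyway. You correctly identified the non-orientability step as the only nontrivial obstacle.
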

\begin{proof} Set $d=\dim(M)\geq 1.$ If $M$ is a homology sphere, then by Theorem \ref{th:manifold} we obtain that $\hat G_K$ is diagonal. Now assume that $\hat G_K$ is diagonal and prove that $M$ is a homology sphere. By Theorem \ref{th:manifold} we obtain that  $\bar H_{n-2}(M,\ZZ)=0$ for $n\neq \ell,$ where $\ell=d+2.$ Substituting $m=n-2,$ we obtain $\bar H_{m}(M,\ZZ)=0$ for $m\neq d.$ Since $d\geq 1,$ we have $\bar H_0(M,\ZZ)=0.$ Therefore, $M$ is connected. If $d=1,$ then in this case we obtain $M=S^1.$ Further we assume that $d\geq 2.$ 

Assume that $M$ is not orientable. Then there is a subgroup of index two in $\pi_1(M)$ \cite[Prop. 3.25]{hatcher2002algebraic}. A subgroup of index two is normal. Therefore  $H_1(M,\ZZ)=\pi_1(M)_{ab}$ is nontrivial, and this gives a contradiction, because $d\geq 2$.

Assume that $M$ is orientable. For a connected orientable closed manifold $M$ of dimension $d$ we have $H_0(M,\ZZ)=H_{d}(M,\ZZ)=\ZZ $  \cite[\S 3.3]{hatcher2002algebraic}. Hence  $M$ is a homology sphere.  
\end{proof}

\begin{example}\label{example:n_0l_0} Here we provide a generalization of 
Example \ref{example:triangle}. 
Let $2\leq n_0 < \ell_0$ and $K$ be a triangulation of the product $M=S^{n_0-2}\times I^{\ell_0-n_0}.$ The homology of $M$ can be computed by the K\"unneth theorem. By Theorem \ref{th:manifold} we obtain that for $n\neq \ell$ we have 
\begin{equation}
\MH_{n,\ell}(\hat G_K,\ZZ) \cong  
\begin{cases}
\ZZ ,& (n,\ell) = (n_0,\ell_0)\\
0, & \text{otherwise}.
\end{cases}
\end{equation}
Therefore, for $n_0\geq 3,$ the digraph $\hat G_K$ satisfies  $(\mathcal V_2)$ but is not diagonal. 
\end{example}

\end{document}